\newtheorem{theorem}{Theorem}[section]
\newtheorem{lemma}[theorem]{Lemma}
\newtheorem{proposition}[theorem]{Proposition}
\newtheorem{corollary}[theorem]{Corollary}
\newtheorem{fact}[theorem]{Fact}
\theoremstyle{definition}
\newtheorem{definition}[theorem]{Definition}
\newtheorem{notation}[theorem]{Notation}
\theoremstyle{remark}
\newtheorem{remark}[theorem]{Remark}
\newtheorem{example}[theorem]{Example}
\numberwithin{equation}{section}
\newcommand\R{\mathbb{R}}
\newcommand\C{\mathbb{C}}
\newcommand\Z{\mathbb{Z}}
\newcommand\N{\mathbb{N}}
\newcommand{\cA}{\mathcal{A}}
\newcommand{\cB}{\mathcal{B}}
\newcommand{\cT}{\mathcal{T}}
\DeclareMathOperator{\id}{id}
\DeclareMathOperator{\rad}{rad}
\DeclareMathOperator{\Hom}{Hom}
\DeclareMathOperator{\Walk}{Walk}
\DeclareMathOperator{\Digraph}{Digraph}
\DeclareMathOperator{\depth}{depth}
\DeclareMathOperator{\BMO}{BMO}
\DeclareMathOperator{\Sym}{Sym}
\DeclareMathOperator{\vol}{vol}
\DeclareMathOperator{\bool}{Bool}
\DeclareMathOperator{\pred}{pred}
\DeclarePairedDelimiter{\norm}{\lVert}{\rVert}
\DeclarePairedDelimiter{\ip}{\langle}{\rangle}
\newcommand{\assemb}{\bigstar}
\title[General limit theorems for mixtures of free, monotone, and Boolean independence]{General limit theorems for mixtures of free, monotone, and Boolean independence}
\author{David Jekel}
\address{\parbox{\linewidth}{Department of
		Mathematical Sciences, University of Copenhagen \\
		Universitetsparken 5, 2100 Copenhagen \O, Denmark}}
\email{daj@math.ku.dk}
\urladdr{http://davidjekel.com}
\author{Lahcen Oussi}
\address{\parbox{\linewidth}{Department of Mathematics and Cybernetics\\ Wroc{\l}aw University of Economics and Business\\
		ul. Komandorska 118/120, 53-345 Wroc{\l}aw, Poland}}
\email{oussimaths@gmail.com}
\author{Janusz Wysocza\'{n}ski}
\address{\parbox{\linewidth}{Mathematical Institute, University of Wroc{\l}aw \\
		pl. Grunwaldzki 2, 50-384 Wroc{\l}aw, Poland}}
\email{janusz.wysoczanski@math.uni.wroc.pl}
\urladdr{https://www.math.uni.wroc.pl/~jwys/}
\keywords{Noncommutative probability; limit theorem; digraph; free independence; monotone independence; Boolean independence; bm-independence; tree; non-crossing partition} %
\subjclass[2020]{Primary: 05C20; 46L53; 60F05; Secondary: 06A07; 46L53; 60E07}
\begin{document}

\maketitle

\begin{abstract}We study mixtures of free, monotone, and Boolean independence described by a directed graph $G = (V,E)$ in the context of $\mathcal{T}$-free convolutions from \cite{JekelLiu2020}. We prove general limit theorems for the associated additive convolution operations $\boxplus_G$.  For a sequence of digraphs $G_n = (V_n,E_n)$, we give sufficient conditions for the limit $\widehat{\mu} = \lim_{n \to \infty} \boxplus_{G_n}(\mu_n)$ to exist whenever the Boolean convolution powers $\mu_n^{\uplus |V_n|}$ converge to some $\mu$.  This in particular includes central limit and Poisson limit theorems, as well as limit theorems for each classical domain of attraction.  The hypothesis on the sequence of $G_n$ is that the normalized counts of digraph homomorphisms from rooted trees into $G_n$ converge as $n \to \infty$, and we verify this for several families of examples where the $G_n$'s converge in some sense to a continuum limit, or digraphon.  In particular, we obtain a new limit theorem for multiregular digraphs, as well as recovering several limit theorems in prior work.
\end{abstract}

	\section{Introduction}
	
	\subsection{Motivation}
	
	Non-commutative probability is based on various notions of independence for non-commuting random variables. The non-commuting variables are represented as elements of some unital $*$-algebra $\mathcal{A}$ (often an algebra of operators on a Hilbert space), and the expectation is represented by a state (a positive unital linear functional) $\phi: \mathcal{A} \to \mathbb{C}$.  There are several notions of independence in the non-commutative setting.  The most famous and fruitful is free independence, defined by Voiculescu \cite{Voiculescu1985,Voiculescu1986} (and implicitly by Avitzour \cite{Av1982}).  Boolean independence appeared implicitly in Bo\.{z}ejko's paper \cite{MB1986} and was further developed by  Speicher and Woroudi \cite{SpW1997}.  Muraki \cite{Muraki2000,Muraki2001} invented monotone independence.  One of the main applications of non-commutative independence has been to study the large-$n$ behavior of random matrices, thanks to Voiculescu's work on asymptotic freeness in \cite{Voiculescu1991}.  Matrix models for monotone independence and the related cyclic monotone independence were given in \cite{CHS2018,CSL2024}.  Meanwhile, various permutation-invariant random matrices can produce tensor, free, and Boolean independence \cite{Male2020}.
	
	It was shown by Speicher \cite{Speicher1997}, Ben Ghorbal and Sch{\"u}rmann \cite{BGS2002}, and Muraki \cite{Muraki2003,Muraki2013} that there are only five universal notions of independence coming from an associative binary product operation on non-commutative probability spaces: classical, free, monotone, antimonotone, and Boolean.  However, there are other binary and more generally $n$-ary product operations that do not fall within this framework but nevertheless allow one to develop theories in analogy with classical probability. This includes $n$-ary product operations that provide mixtures of tensor, free, Boolean, monotone independence.  In particular, the third author developed the theory of BM-independence, a mixture of Boolean and monotone independence which appears naturally when replacing the totally ordered set used for monotone independence with a partially ordered set.  A chain in the poset corresponds to a monotone independent family of algebras that are monotone independent, while a set of mutually incomparable elements produces a boolean independent family of algebras.
	BM-independence was proposed originally in \cite[\S 2]{JW20072}, then formulated as a property of BM-extension operators in \cite[\S 2]{JW2008}, and finally established in \cite[\S 2]{JW2010} as a notion of noncommutative independence, i.e.\ a rule for computing joint moments.   A similar mixture of Boolean and free independence, called BF-independence, was introduced by Kula and Wysocza\'{n}ski \cite{AKJW2013}.  Analogously, M{\l}otkowski \cite{Mlo2004} studied a mixture of classical and free independence under the name of $\Lambda$- independence, which was then developed by Speicher and Wysocza\'{n}ski under the name $\varepsilon$-independence \cite{SpWys2016}.  Recently, Arizmendi, Mendoza, and Vazquez-Becerra \cite{OASRMJVB2025}, introduced the notion of BMT independence (through a directed graph) which is mixture of Boolean, monotone and tensor independences, and provided the corresponding Central and Poisson-Type Limit Theorems.
	
	Our work focuses on mixtures of free, Boolean, and monotone independence that are described by directed graphs in
	the context of $\mathcal{T}$-free independence, as in \cite[\S 3.2, 5.5]{JekelLiu2020}.  This setting extends both BM independence from \cite{JW2007,JW2010} (see Proposition \ref{prop: equivalent definitions of BM independence}) and BF-independence from \cite{AKJW2013} (see Remark \ref{rem: BF independence}).  We remark that when a digraph has no bidirectional edges, so that the pairwise relations between the algebras are Boolean and (anti)monotone independence, one might expect that our construction agrees with BMT-independence \cite{OASRMJVB2025}; while this is true for partial orders, this fails for general digraphs---for instance if $G$ is a directed $3$-cycle (see Remark \ref{rem: BMF versus BMT}).  In particular, there is not a unique or canonical mixture of boolean and monotone independence associated to a given digraph specifying the pairwise relations.
	
	Our main goal is to define $G$-independence for a directed graph $G$ and  to give a unified approach to limit theorems for additive convolution in the setting of $G$-independence.  Here the $G$-free additive convolution of a family of probability measures is the distribution of a sum of $G$-independent variables having the specified individual distributions, and our goal is, given a sequence of digraphs $G_n$ and probability measures $\mu_n$, to understand the limiting behavior of the convolution $\boxplus_{G_n}(\mu_n)$ of several copies of $\mu_n$.  We generalize existing limit theorems in two aspects:
	\begin{itemize}
		\item \textbf{More general hypotheses on the digraphs.}  We consider an arbitrary sequence of digraphs $G_n$ with number of vertices tending to infinity, requiring only the convergence of the normalized number of homomorphisms from trees into $G_n$.  Examples of such graphs include both the discretizations of cones from \cite{JW2010,LOJW1,LOJW2,LOJW3} and the iterated compositions of a fixed graph from \cite{JekelLiu2020}.  We also describe new classes of examples such as multi-regular and sparse graphs in \S \ref{sec: examples}.
		\item \textbf{More general hypotheses on the measures.}  Under the above assumptions on a sequence of graphs $G_n$, we show that the $G_n$-free convolutions $\boxplus_{G_n}(\mu_n)$ converge in the weak-$*$ topology for any sequence of measures $\mu_n$ such that the Boolean convolution powers $\mu_n^{\uplus |V_n|}$ converge in the weak-$*$ topology.  Thus, in particular, we obtain a limit theorem for each classical domain of attraction, in the spirit of Bercovici--Pata \cite{BP1999}. We extend the general limit theorem to the non-compactly supported case using tools from \cite{JDW2021}.
	\end{itemize}
	In particular, while papers on new types of non-commutative independence have often unnecessarily restricted themselves to proving only a central limit theorem or a Poisson limit theorem, we attack the general case directly.   On the other hand, the limit theorems for $\mathcal{T}$-independence and $G$-independence in \cite{JekelLiu2020,JDW2021} handled general sequences of measures but with very restrictive assumptions on the graphs, focusing only on iterated compositions of a fixed graph.
	
	\subsection{Results}
	
	To state our results more precisely, first recall that a \emph{digraph} is a pair $G=(V,E)$ where $V$ is the vertex set and the directed edge set $E \subseteq V \times V$ does not intersect the diagonal; in other words, $E$ viewed as a relation on $V$ is irreflexive.  We write $v \rightsquigarrow w$ and $w \leftsquigarrow v$ when $(v,w) \in E$.  Several other types of combinatorial objects important to our paper are (equivalent to) subclasses of digraphs (see \S \ref{subsec: digraphs} for details):
	\begin{enumerate}
		\item A \emph{graph} is a digraph $(V,E)$ such that $(v,w) \in E$ if and only if $(w,v) \in E$, or equivalently, $E$ is a symmetric relation on $V$.
		\item A \emph{partially ordered set} or \emph{poset} can be expressed as a digraph by using the edge set to represent the corresponding \emph{strict} partial order.
		\item A \emph{rooted tree} can be represented as a digraph $G = (V,E)$ by representing each edge as a directed edge oriented away from the root vertex.  Such a digraph is called an \emph{out-tree} (or \emph{arborescence}).
	\end{enumerate}
	If $G_1 = (V_1,E_1)$ and $G_2 = (V_2,E_2)$ are digraphs, then a \emph{digraph homomorphism} from $G_1$ to $G_2$ is a map $\phi: V_1 \to V_2$ such that if $v \rightsquigarrow w$ in $G_1$, then $\phi(v) \rightsquigarrow \phi(w)$ in $G_2$.  We denote the set of homomorphisms by $\Hom(G_1,G_2)$.
	
	For each finite digraph $G=(V, E)$, we will define a convolution operation $\boxplus_G: \mathcal{P}(\R)^{\times V} \to \mathcal{P}(\R)$, called $G$-free convolution, where $\mathcal{P}(\R)$ denotes the space of Borel probability measures on $\R$ and $\mathcal{P}(\R)^{\times V}$ is the Cartesian product over the index set $V$.  If the input measures $\mu_v$ have compact support, they can be viewed as spectral distributions of bounded operators $X_v$, and then the convolution operation is defined by creating $G$-independent copies of $X_v$ through the explicit Hilbert space construction described in \cite{JekelLiu2020}, and in \S \ref{subsec: product space} below.  The convolution can also be described by calculating its moments (see Theorem \ref{thm:momentformula} and Lemma \ref{lem: moments of sum}).  For the case of measures $\mu_v$ with unbounded support, the convolution can be described complex-analytically, or obtained by writing the measures $\mu_v$ as weak-$*$ limits of compactly supported probability measures (see Proposition \ref{prop: K transform equation}, Definition \ref{def: convolution general}, and the surrounding discussion).  The $G$-free additive convolution of $(\mu_v)_{v \in V}$ is denoted by $\boxplus_G((\mu_v)_{v \in V})$, and $\boxplus_G(\mu)$ denotes the  $G$-free additive convolution of copies of the same measure $\mu$ indexed by $V$.
	
	Our main result is the following.  Note that in this work, convergence of measures always refers to weak-$*$ or vague convergence, or equivalently convergence in the L{\'e}vy distance.
	
	\begin{theorem} \label{thm:limit}
		Let $G_n = (V_n,E_n)$ be a sequence of finite digraphs such that $\lim_{n \to \infty} |V_n| = \infty$.  Suppose that for every finite out-tree $G' = (V',E')$, the limit
		\[
		\beta_{G'} := \lim_{n \to \infty} \frac{1}{|V_n|^{|V'|}} |\Hom(G',G_n)| \text{ exists.}
		\]
		Let $\mu \in \mathcal{P}(\R)$ and let $(\mu_n)_{n \in \N}$ be a sequence of probability measures on $\R$ such that $\lim_{n \to \infty} \mu_n^{\uplus |V_n|} = \mu$, where $\mu^{\uplus t}=\mu_t$ denotes the $t$-transformation or $t$-th Boolean convolution power of $\mu$ for $t \in (0,\infty)$ (see \cite{MBJW1998, MBJW2001}).  Then
		\[
		\lim_{n \to \infty} \boxplus_{G_n}(\mu_n) \text{ exists,}
		\]
		Furthermore, $\lim_{n \to \infty} \boxplus_{G_n}(\mu_n)$ depends only upon $\mu$ and the coefficients $\beta_{G'}$ for finite out-trees $G'$.
	\end{theorem}
	
	The proof of this theorem (see \S \ref{sec: limit theorem}) proceeds in two stages.  We first show that it holds for compactly supported measures using moment computations (see \S \ref{subsec: limit theorem compact support}). Then we extend it to arbitrary measures using the results of \cite{JDW2021} (see \S \ref{subsec: limit general}).
	
	Next, we turn to applications of the main theorem, exhibiting several classes of examples where the limits exist.  Many of these  are obtained by viewing the digraphs $G_n$ as discretizations of a digraphon (the directed version of a graphon).  A digraphon is a measurable digraph $(\Omega,\rho,\mathcal{E})$, that is, a complete probability measure space $(\Omega,\rho)$ representing some set of vertices, and a measurable subset $\mathcal{E} \subseteq \Omega \times \Omega$ representing the set of directed edges.  For instance, $\Omega$ could be $[0,1]$ and $\mathcal{E}$ could be a subset of $[0,1]^{2}$ defined by some inequalities.  One may approximate $(\Omega,\mathcal{E})$ by discreted digraphs $G_n=(V_n, E_n)$ by partitioning $\Omega$ into measurable subsets $(A_{n,v})_{v \in V_n}$ of measure $1/|V_n|$ and choosing subset $E_n \subseteq V_n \times V_n$ such that $\tilde{\mathcal{E}}_n = \bigcup_{(v,w) \in E_n} A_v \times A_w$ converges to $\mathcal{E}$ in measure as $n \to \infty$.  In this case, the $G_n$'s will satisfy the hypotheses of Theorem \ref{thm:limit}.  More precisely, we have the following result (see \S \ref{sec: examples}).
	
	\begin{proposition} \label{prop: continuum limit}
		Let $(\Omega,\rho)$ be a complete probability measure space and let $\mathcal{E} \subseteq \Omega \times \Omega$ be measurable.  For each $n \in \N$, let $G_n = (V_n,E_n)$ be a finite digraph.  Let $(A_{n,v})_{v \in V_n}$ be a measurable partition of $\Omega$ into sets of measure $1/|V_n|$, and let $\tilde{\mathcal{E}}_n = \bigcup_{(v,w) \in E_n} A_{n,v} \times A_{n,w}$.  Suppose that $(\rho\times \rho)(\tilde{\mathcal{E}}_n \Delta \mathcal{E}) \to 0$.  Then for every digraph $G' = (V',E')$, we have
		\[
		\lim_{n \to \infty} \frac{\Hom(G',G_n)}{|V_n|^{|V'|}} = \rho^{\times V'}(\Hom(G',(\Omega,\mathcal{E}))),
		\]
		where $\Hom(G',(\Omega,\mathcal{E})$ is viewed as a subset of the Cartesian product $\Omega^{\times V'}$ and $\rho^{\times V'}$ denotes the product measure.  In particular, if $\mu \in \mathcal{P}(\R)$ and $(\mu_n)_{n \in \N}$ is a sequence of probability measures on $\R$ such that $\lim_{n \to \infty} \mu_n^{\uplus |V_n|} = \mu$, then $\lim_{n \to \infty} \boxplus_{G_n}(\mu_n)$ exists.
	\end{proposition}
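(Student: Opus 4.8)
The plan is to interpret the normalized homomorphism count $|\Hom(G',G_n)|/|V_n|^{|V'|}$ as the measure of an explicit subset of $\Omega^{V'}$, and then to show that this subset converges in measure to $\Hom(G',(\Omega,\mathcal{E}))$. For a pair $(v,w) \in V' \times V'$ with $v \neq w$, write $\pi_{v,w} \colon \Omega^{V'} \to \Omega \times \Omega$ for the projection onto the corresponding two coordinates, and set
\[
\widetilde{H}_n := \bigcap_{(v,w) \in E'} \pi_{v,w}^{-1}(\widetilde{\mathcal{E}}_n), \qquad H := \Hom(G',(\Omega,\mathcal{E})) = \bigcap_{(v,w) \in E'} \pi_{v,w}^{-1}(\mathcal{E}),
\]
where the latter is the set of $f \colon V' \to \Omega$ with $(f(v),f(w)) \in \mathcal{E}$ whenever $v \rightsquigarrow w$ in $G'$. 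Since $E'$ is irreflexive, every such pair satisfies $v \neq w$, so each $\pi_{v,w}$ pushes $\rho^{\times V'}$ forward to $\rho \times \rho$ (Fubini, using that $\rho$ is a probability measure, so the spectator coordinates contribute a factor $1$); after passing to completions this also shows $\widetilde{H}_n$ and $H$ are $\rho^{\times V'}$-measurable.

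The first key step is the exact identity $\rho^{\times V'}(\widetilde{H}_n) = |\Hom(G',G_n)|/|V_n|^{|V'|}$. This holds because $\widetilde{\mathcal{E}}_n$ is by construction a union of the pairwise disjoint ``cells'' $A_{n,a} \times A_{n,b}$; consequently, for $f \in \Omega^{V'}$ with associated map $\psi \colon V' \to V_n$ determined by $f(v) \in A_{n,\psi(v)}$, one has $(f(v),f(w)) \in \widetilde{\mathcal{E}}_n$ iff $(\psi(v),\psi(w)) \in E_n$. Hence $f \in \widetilde{H}_n$ iff $\psi \in \Hom(G',G_n)$, so $\widetilde{H}_n$ is the disjoint union of the product cells $\prod_{v \in V'} A_{n,\psi(v)}$ over $\psi \in \Hom(G',G_n)$, each of measure $|V_n|^{-|V'|}$; summing gives the claim.

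The second key step is $\rho^{\times V'}(\widetilde{H}_n) \to \rho^{\times V'}(H)$. From the elementary inclusion $\bigcap_i X_i \,\Delta\, \bigcap_i Y_i \subseteq \bigcup_i (X_i \,\Delta\, Y_i)$ for finite families and $\pi_{v,w}^{-1}(\widetilde{\mathcal{E}}_n) \,\Delta\, \pi_{v,w}^{-1}(\mathcal{E}) = \pi_{v,w}^{-1}(\widetilde{\mathcal{E}}_n \,\Delta\, \mathcal{E})$ we obtain
\[
\rho^{\times V'}(\widetilde{H}_n \,\Delta\, H) \le \sum_{(v,w) \in E'} \rho^{\times V'}\bigl(\pi_{v,w}^{-1}(\widetilde{\mathcal{E}}_n \,\Delta\, \mathcal{E})\bigr) = |E'| \cdot \rho(\widetilde{\mathcal{E}}_n \,\Delta\, \mathcal{E}) \longrightarrow 0
\]
by hypothesis. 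Since $|\rho^{\times V'}(\widetilde{H}_n) - \rho^{\times V'}(H)| \le \rho^{\times V'}(\widetilde{H}_n \,\Delta\, H)$, combining the two steps proves the displayed limit formula. The ``in particular'' then follows by applying this to every finite rooted tree $G'$: the limits $\beta_{G'} = \rho^{\times V'}(\Hom(G',(\Omega,\mathcal{E})))$ all exist, so the hypotheses of Theorem \ref{thm:limit} are satisfied and $\lim_{n\to\infty} \boxplus_{G_n}(\mu_n)$ exists.

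The argument is essentially bookkeeping; the only genuinely delicate point is the measure theory needed in the first paragraph — namely, ensuring that $\widetilde{H}_n$ and $H$ lie in the completed product $\sigma$-algebra and that the pushforward identity $\rho^{\times V'} \circ \pi_{v,w}^{-1} = \rho \times \rho$ persists when $\mathcal{E}$ is only measurable with respect to the completion. This is handled by writing a completed-product-measurable set as a product-measurable set together with a subset of a product-measurable null set, and noting that preimages under coordinate projections preserve both properties.
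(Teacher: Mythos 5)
Your proof is correct and follows essentially the same route as the paper: the exact cell-decomposition identity $\rho^{\times V'}(\Hom(G',(\Omega,\tilde{\mathcal{E}}_n))) = |\Hom(G',G_n)|/|V_n|^{|V'|}$, a symmetric-difference bound of the form $|E'|\,\rho^{\times 2}(\tilde{\mathcal{E}}_n \,\Delta\, \mathcal{E})$, and then Theorem \ref{thm:limit} for the ``in particular'' clause. The only (cosmetic) difference is that you obtain the error bound from the set-theoretic inclusion $\bigcap_i X_i \,\Delta\, \bigcap_i Y_i \subseteq \bigcup_i (X_i \,\Delta\, Y_i)$, whereas the paper swaps indicator functions one at a time in an $L^1$ estimate; your added remark on completed product $\sigma$-algebras is a harmless extra precaution.
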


	Using Proposition \ref{prop: continuum limit} and similar techniques, we show how Theorem \ref{thm:limit} applies to several families of examples:
	\begin{enumerate}
		\item BM-independences described by positive symmetric cones as in \cite{AKJW2010,LOJW1} (see \S \ref{subsec: BM limit theorem}).
		\item Iterated compositions of the same digraph in the sense of \cite{JekelLiu2020} (see \S \ref{subsec: iterated composition}).
		\item Regular digraphs and, more generally, multi-regular digraphs (see \S \ref{subsec: multiregular}).
		\item Sparse graphs (see \S \ref{subsec: sparse graphs}).
	\end{enumerate}
	
	Given that, in the situation of Proposition \ref{prop: continuum limit}, the coefficients $\beta_{G'}$ can often be described as measures of the set of homomorphisms into some measurable digraph, it is natural to model the limiting measures using operators on a continuum analog of the $G$-product Hilbert space.  This leads to a Fock space associated to $(\Omega,\mathcal{E})$, which generalizes the BM Fock spaces in \cite{AKJW2010,JW2010} (see Example \ref{ex: BM Fock space}).  The construction also overlaps with the Fock spaces in \cite{JekelLiu2020} (see Example \ref{ex: Fock for iterated composition}), which in turn encompasses Fock spaces in the free \cite{Voiculescu1985}, Boolean \cite{BGS2004}, and monotone cases \cite{Lu1997,Muraki1997} (for further background see also the unified explanation of these three cases in \cite{Franz2003}).
	
	\subsection{Remarks and Questions}
	
	Very recent work of C{\'e}bron, Oliveira Santos, and Youssef \cite{COSY2024} gives somewhat analogous results for the setting of $\varepsilon$-independence, which is a mixture of tensor and free independence.  Their main result is a central limit theorem for $\varepsilon$-additive convolutions associated to a sequence of graphs $G_n$ such that the normalized homomorphism counts from any fixed graph $G'$ into $G_n$ converge to $|\Hom(G',G)|$ for some graphon $G$ on $\Omega = [0,1]$.  The central limit distribution also has a graphon Fock-space model \cite[\S 2]{COSY2024} analogous to that in \S \ref{sec: Fock space} of our work.  Note that, unlike the situation in our Theorem \ref{thm:limit}, one cannot restrict the test graph $G'$ to be a tree or forest because the moment formulas for $\varepsilon$-independence involve partitions with crossings.  Moreover, in the setting of $\varepsilon$-independence, it is not clear how to prove limit theorems for measures with unbounded support since we lack a good analytical theory of additive convolution.
	
	Another natural question is how to define a suitable mixed independence that allows the pairwise relations to be any combination of tensor, free, Boolean, and monotone independence.  Is it possible to define such a relation that extends either the BMT independence of \cite{OASRMJVB2025} or the $G$-independence in this paper?  Furthermore, as mentioned earlier, free, Boolean, and monotone independence can all arise in various ways from random matrix models, and so can $\varepsilon$-independence, so do the mixtures of independence studied in this paper arise in random matrix theory as well?  Finally, we propose establishing a multiplicative analog of Theorem \ref{thm:limit} as a problem for future work.
	
	\subsection{Organization}
	
	In \S \ref{sec: prelim}, we present elementary background on non-commutative probability spaces, digraphs, and non-crossing partitions. In \S \ref{sec: independence}, we give a self-contained explanation of the Hilbert space construction and moment formulas for $G$-independence.  In \S \ref{sec: limit theorem}, we prove Theorem \ref{thm:limit}.  In \S \ref{sec: examples}, we prove Proposition \ref{prop: continuum limit} and study several families of examples for which limit distributions exist. In \S \ref{sec: Fock space}, we describe the Fock space construction associated to digraphons, and more generally measure spaces equipped with a two-variable weight function.
	
	
	\section{Preliminaries} \label{sec: prelim}
	
	\subsection{Non-commutative probability spaces}
	
	We assume familiarity with basic $\mathrm{C}^*$-algebra theory.  In particular, recall that if $\mathcal{A}$ is a unital $\mathrm{C}^*$-algebra, then a state $\phi$ is a linear functional $A \to \C$ satisfying $\phi(1) = 1$ and $\phi(a^*a) \geq 0$.  For each state $\phi$, one can define a semi-inner product $\ip{a,b}_\phi = \phi(a^*b)$.  Quotienting by the kernel $\{a: \phi(a^*a)\}$ and taking the completion produces the \emph{GNS Hilbert space} $H_\phi$; for each $a \in \mathcal{A}$, the corresponding element of $H_\phi$ is denoted by $\widehat{a}$.  There is also a $*$-homomorphism $\pi_\phi: \mathcal{A} \to B(H_\phi)$ satisfying that $\pi_\phi(a) \widehat{b} = \widehat{ab}$, and $\pi_\phi$ is called the \emph{GNS respresentation} associated to $\phi$.
	
	A \emph{$\mathrm{C}^*$-probability space} refers to a unital $\mathrm{C}^*$-algebra $\mathcal{A}$ with a state $\phi$, such that the GNS representation $\pi_\phi: \mathcal{A} \to B(H_\phi)$ induced by $\phi$ is faithful,\footnote{Note that various authors either require the stronger hypothesis that the \emph{state} is faithful ($\phi(a^*a) = 0$ implies $a = 0$) or do not assume any faithfulness condition at all.} or equivalently $\phi(bac) = 0$ for all $b, c \in \mathcal{A}$ implies that $a = 0$.
	
	If the GNS respresentation is faithful, then $\mathcal{A}$ is isomorphic to the image $\pi_\phi(\mathcal{A})$, which is a $\mathrm{C}^*$-subalgebra of $B(H_\phi)$, and the state $\phi$ can be recovered as $\phi(a) = \ip{\xi, \pi_\phi(a)\xi}$ where $\xi = \widehat{1} \in H_\phi$.  Thus, faithfulness of the GNS representation is a natural non-degeneracy condition which guarantees that the state eventually captures all the information about the $\mathrm{C}^*$-algebra.  In particular, it allows us to define $G$-product of $\mathrm{C}^*$-probability spaces by means of defining the appropriate Hilbert space on which it acts (see Definition \ref{def: G free product probability space}).
	
	\subsection{Digraphs} \label{subsec: digraphs}
	
	\textbf{Digraphs and graphs:} Recall that a \emph{digraph} is a pair $(V,E)$ where $V$ is a set of \emph{vertices} set and $E \subseteq V \times V$ is a set of \emph{directed edges} that does not intersect the diagonal.  We write $v \rightsquigarrow w$ and $w \leftsquigarrow v$ when $(v,w) \in E$, and we write $E^{\dagger} = \{(v,w): (w,v) \in E\}$.  A \emph{walk} in a directed graph is a sequence of vertices $v_0$, \dots, $v_m$ such that $v_0 \rightsquigarrow v_1 \rightsquigarrow \dots \rightsquigarrow v_m$, where $m \geq 0$, and walk is said to be a \emph{path} if no vertices are repeated.  
	
	A (simple undirected) \emph{graph} can be defined as a digraph such $E$ is \emph{symmetric}, or $(v,w) \in E$ if and only if $(w,v) \in E$.  Undirected graphs are sometimes equivalently defined by specifying $E$ as a set of unordered pairs.
	
	\textbf{Rooted trees and out-trees:}  A \emph{tree} is a graph $G$ such that any two vertices are connected by a \emph{unique} path.  A \emph{rooted tree} is a tree $G$ together with a chosen root vertex $r$.  A \emph{out-tree} is a digraph $G = (V,E)$ such that the following holds:  there exists a vertex $r$ such that for every $v \in V$, there is a unique (directed) walk from $r$ to $v$.  An \emph{out-forest} is any disjoint union of out-trees.
	
	There is a well-known bijection between rooted undirected trees and out-trees on a given vertex set $V$, described as follows. Consider a rooted undirected tree $(G,r)$.  For each vertex $v$, let $d(v,r)$ be the distance of $v$ from the root vertex (that is, the length of the unique path from $v$ to $r$).  Because of the uniqueness of such a path, one sees that if $v$ and $w$ are adjacent vertices, then $|d(v,r) - d(w,r)| = 1$.  Let $G' = (V,E')$ be the digraph given by $E' = \{ (v,w) \in E: d(v,r) + 1 = d(w,r) \}$.  One can check that there is a unique walk from $r$ to any given vertex $v$, and hence $G'$ is an out-tree.
	
	Conversely, given an out-tree $G = (V,E')$, there is some vertex $r$ such that for every vertex $v$ there is a unique walk from $r$ to $v$.  We claim first that $r$ is unique.  Note that there cannot be any edges going into $r$; otherwise, if $(v,r) \in E$, then by joining this edge to a walk from $r$ to $v$, we would obtain another walk from $r$ to $r$.  Therefore, the vertex $r$ can be characterized as the unique vertex of $G$ that does not have any ingoing edges (any other vertex $v$ has an ingoing edge coming from a walk from $r$ to $v$).  Let $G$ be the undirected graph $(V,E' \cup (E')^\dagger)$.  We then claim that $G$ is a undirected tree.  Indeed, the out-tree $(V,E')$ must satisfy $|E'| = |V| - 1$ since each vertex besides $r$ must have a unique ingoing edge.  We deduce that $G$ has exactly $|V| - 1$ undirected edges; it is also connected, and hence it must be a tree.  It is straightforward to check that the maps from rooted undirected trees to out-trees and vice versa are inverse to each other.
	
	\textbf{Posets:}  A \emph{partially ordered set} or \emph{poset} is a set $V$ together with a partial order $\preceq$, that is, a relation that is reflexive, antisymmetric, transitive.  A \emph{strict poset} is a set $V$ together with a \emph{strict partial order}, that is, a relation that is irreflexive, antisymmetric, and transitive.  There is a well-known bijection between (non-strict) posets and strict posets on a given set $V$.  Given a poset $(V,\preceq)$, the corresponding strict poset is defined using the relation $\prec$ given by $(\prec) = (\preceq) \setminus (=)$ as subsets of $V \times V$.  Conversely, given a strict poset, the non-strict poset is given by $(\preceq) = (\prec) \cup (=)$.
	
	When we handle posets in the remainder of the paper, we will implicitly use both of these two equivalent representations; thus, $\prec$ and $\preceq$ will refer to the strict and non-strict versions of ``the same'' partial order.  We also consider posets as a subclass of digraphs, where the edge set $E$ is given by the strict partial order $(\prec) \subseteq V \times V$.

	\subsection{Non-crossing partitions}
	
	Non-crossing partitions are a combinatorial tool that has been used to describe moments in non-commutative probability since the work of Speicher \cite{Speicher1994,Speicher1998}.  We recall some relevant definitions and facts here.
	
	\begin{definition}[Partitions]
		We use the notation $[k] = \{1,\dots,k\}$.  A \emph{partition} of $[k]$ is a collection $\pi$ of non-empty subsets of $[k]$, called \emph{blocks}, such that $[k] = \bigsqcup_{B \in \pi} B$.  The set of partitions of $[k]$ is denoted $\mathcal{P}_k$.
	\end{definition}
	
	\begin{definition}[Non-crossing partitions]
		For a partition $\pi$ of $[k]$, a \emph{crossing} is a sequence of indices $i < i' < j < j'$ such that $i$ and $j$ are in some block $B$ and $i'$ and $j'$ are in some block $B' \neq B$.  We say that $\pi$ is \emph{non-crossing} if it has no crossings.  We denote by $\mathcal{NC}_k$ the set of non-crossing partitions of $[k]$.
	\end{definition}
	
	\begin{remark}
		Visually, a partition is non-crossing if, after arranging points labeled $1$, \dots, $k$ on a horizontal line, it is possible to connect all the points in the same blocks by curves above the horizontal line such that the curves associated to points in two different blocks never cross each other.  See Figure \ref{fig: NC partition} for an example.
	\end{remark}
	
	\begin{figure}
		
		\begin{center}
			
			\begin{tikzpicture}[scale=0.9]
				
				\node[circle,fill] (1) at (1,0) [label=below:$1$] {};
				\node[circle,fill] (2) at (2,0) [label=below:$2$] {};
				\node[circle,fill] (3) at (3,0) [label=below:$3$] {};
				\node[circle,fill] (4) at (4,0) [label=below:$4$] {};
				\node[circle,fill] (5) at (5,0) [label=below:$5$] {};
				\node[circle,fill] (6) at (6,0) [label=below:$6$] {};
				\node[circle,fill] (7) at (7,0) [label=below:$7$] {};
				\node[circle,fill] (8) at (8,0) [label=below:$8$] {};
				\node[circle,fill] (9) at (9,0) [label=below:$9$] {};
				\node[circle,fill] (10) at (10,0) [label=below:$10$] {};
				\node[circle,fill] (11) at (11,0) [label=below:$11$] {};
				\node[circle,fill] (12) at (12,0) [label=below:$12$] {};
				\node[circle,fill] (13) at (13,0) [label=below:$13$] {};
				\node[circle,fill] (14) at (14,0) [label=below:$14$] {};
				\node[circle,fill] (15) at (15,0) [label=below:$15$] {};
				\node[circle,fill] (16) at (16,0) [label=below:$16$] {};
				
				\draw (1) -- (1,2) -- (5,2) -- (5);
				\draw (2) -- (2,1) -- (4,1) -- (4);
				\draw (6) -- (6,3) -- (16,3) -- (16);
				\draw (9) -- (9,3);
				\draw (15) -- (15,3);
				\draw (7) -- (7,1) -- (8,1) -- (8);
				\draw (10) -- (10,2) -- (14,2) -- (14);
				\draw (13) -- (13,2);
				\draw (11) -- (11,1) -- (12,1) -- (12);
				
			\end{tikzpicture}
			
		\end{center}
		
		\caption{A non-crossing partition $\pi$ of $[16]$ with blocks $\{1,5\}$, $\{2,4\}$, $\{3\}$, $\{6,9,15,16\}$, $\{7,8\}$, $\{10,13,14\}$, and $\{11,12\}$.} \label{fig: NC partition}
		
	\end{figure}
	
	\begin{definition}[Nested blocks]
		If $\pi$ is a partition of $[k]$ and $B', B \in \pi$, we say that $B'$ is \emph{nested inside} $B$ if there exist $i, j \in B$ such that $i < j$ and $B' \subseteq \{i+1,\dots,j-1\} \subseteq [k] \setminus B$; in other words, there are no intervening indices of $B$ between $i$ and $j$, and $B'$ lies entirely between $i$ and $j$.  In this case, we write $B \prec B'$.
	\end{definition}
	
	\begin{definition}[Separated blocks]
		If $\pi$ is a partition of $[k]$ and $B, B' \in \pi$, we say that $B$ and $B'$ are \emph{separated} if there exists $j \in [k]$ such that either $B \subseteq \{1,\dots,j\}$ and $B' \subseteq \{j+1,\dots,k\}$ or $B' \subseteq \{1,\dots,j\}$ and $B \subseteq \{j+1,\dots,k\}$.
	\end{definition}
	
	\begin{lemma} \label{lem:trichotomy}
		Let $\pi$ be a non-crossing partition of $[k]$ and $B$, $B'$  distinct blocks of $\pi$. Then either $B'$ is nested inside $B$, $B$ is nested inside $B'$, or $B$ and $B'$ are separated; and these cases are mutually exclusive.
	\end{lemma}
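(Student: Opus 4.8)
The plan is to prove mutual exclusivity and existence of (at least) one of the three cases separately. I will use two elementary observations throughout: since $B \neq B'$ are blocks of a partition they are disjoint, so no element of $B$ equals an element of $B'$; and, writing $\min C$ and $\max C$ for the least and greatest elements of a block $C$, the condition ``$B$ and $B'$ are separated'' is equivalent to $\max B < \min B'$ or $\max B' < \min B$ (the witness $j$ may always be taken to be $\max B$ or $\max B'$).

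For mutual exclusivity, I would start from the assumption that $B'$ is nested inside $B$, say $B' \subseteq \{i+1,\dots,j-1\}$ with $i < j$ both in $B$. This yields the chain $\min B \le i < \min B' \le \max B' < j \le \max B$, hence $\min B < \min B'$ and $\max B' < \max B$. The symmetric hypothesis ``$B$ nested inside $B'$'' would analogously give $\min B' < \min B$, contradicting the first inequality; and ``$B$ and $B'$ separated'' would give $\max B < \min B'$ or $\max B' < \min B$, each of which contradicts the chain. So no two of the three cases can hold simultaneously.

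For existence, I would suppose that $B$ and $B'$ are not separated and produce a nesting. Using disjointness and the symmetry $B \leftrightarrow B'$, I may assume $\min B < \min B'$. Then ``not separated'' rules out $\max B < \min B'$, while $\max B' < \min B$ is already impossible because $\min B < \min B' \le \max B'$; so $\min B < \min B' < \max B$. Now set $i := \max\{x \in B : x < \min B'\}$ and $j := \min\{x \in B : x > \min B'\}$, both well defined by the previous line. Then $i < \min B' < j$, and by maximality of $i$ and minimality of $j$ the interval $\{i+1,\dots,j-1\}$ contains no element of $B$. The crux is to verify $B' \subseteq \{i+1,\dots,j-1\}$: granting this, $B' \subseteq \{i+1,\dots,j-1\} \subseteq [k] \setminus B$ exhibits $B'$ as nested inside $B$, and the case $\min B' < \min B$ gives $B$ nested inside $B'$.

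To prove that inclusion I would argue by contradiction: if some $b' \in B'$ lies outside $\{i+1,\dots,j-1\}$, then, since $b'$ differs from both $i$ and $j$, either $b' < i$ or $b' > j$. In the former case $b' < i < \min B' < j$ with $i, j \in B$ and $b', \min B' \in B'$ is a crossing; in the latter case $i < \min B' < j < b'$ with $i, j \in B$ and $\min B', b' \in B'$ is a crossing; either way $\pi$ would fail to be non-crossing. The argument is essentially bookkeeping, and the only places demanding care are keeping the definition of ``separated'' aligned with the min/max inequalities and correctly matching the four indices of each crossing to their blocks.
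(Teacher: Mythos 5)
Your proof is correct and follows essentially the same route as the paper: pick the maximal element $i$ of $B$ below $\min B'$ and the minimal element $j$ of $B$ above it, and use the non-crossing condition to force $B'$ into the gap $\{i+1,\dots,j-1\}$, with separation handled when no such interleaving occurs. The only difference is that you spell out the mutual exclusivity via min/max inequalities, which the paper leaves as a straightforward exercise.
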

	
	\begin{proof}
		Suppose that there exists $i, j \in B$ and $i' \in B'$ such that $i < i' < j$.  Without loss of generality, assume that $i$ is the largest index in $B$ to the left of $i'$, and $j$ is the smallest index in $B$ to the right of $i'$.  Then $\{i+1,\dots,j-1\} \subseteq [k] \setminus B$.  If $B'$ had some element $j'$ that was not contained in $\{i+1,\dots,j-1\}$, then $i < i' < j < j'$ would be a crossing.  Hence, $B' \subseteq \{i+1,\dots,j-1\}$, so $B'$ is nested inside $B$.
		
		Similarly, if there exists $i', j' \in B'$ and $j \in B$ such that $i' < j < j'$, then $B$ is nested inside $B'$.
		
		If neither of the two cases above holds, then either all the indices of $B$ are less than those of $B'$, or vice versa, hence $B$ and $B'$ are separated.  It is a straightforward exercise that $B \prec B'$, $B' \prec B$, and $B$ and $B'$ separated are mutually exclusive cases.
	\end{proof}
	
	\begin{corollary}
		For $\pi \in \mathcal{NC}_k$, the nesting relation $\prec$ is a strict partial order on $\pi$.
	\end{corollary}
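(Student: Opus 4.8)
The plan is to verify directly the three defining properties of a strict partial order on $\pi$: irreflexivity, antisymmetry, and transitivity. Irreflexivity and antisymmetry are essentially immediate. For irreflexivity, if $B \prec B$ held for some block $B$, then by definition there would be $i, j \in B$ with $i < j$ and $B \subseteq \{i+1,\dots,j-1\} \subseteq [k]\setminus B$; but a nonempty block cannot be contained in its own complement (and blocks of a partition are nonempty), so this is impossible. For antisymmetry, Lemma \ref{lem:trichotomy} already records that for distinct blocks the three alternatives $B \prec B'$, $B' \prec B$, and ``$B$ and $B'$ separated'' are mutually exclusive, so $B \prec B'$ and $B' \prec B$ cannot hold simultaneously.

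The substantive step is transitivity, and this is where I would concentrate the (routine) effort. Suppose $B \prec B'$ and $B' \prec B''$ for blocks of $\pi$. Unwinding the definition of $\prec$, choose $i, j \in B$ witnessing $B \prec B'$, so that $i < j$ and $B' \subseteq \{i+1,\dots,j-1\} \subseteq [k]\setminus B$, and choose $i', j' \in B'$ witnessing $B' \prec B''$, so that $i' < j'$ and $B'' \subseteq \{i'+1,\dots,j'-1\} \subseteq [k]\setminus B'$. Since $i', j' \in B' \subseteq \{i+1,\dots,j-1\}$, we obtain the chain of inequalities $i < i' < j' < j$, whence $\{i'+1,\dots,j'-1\} \subseteq \{i+1,\dots,j-1\}$. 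Therefore $B'' \subseteq \{i+1,\dots,j-1\} \subseteq [k]\setminus B$ with $i, j \in B$ and $i < j$, which is exactly the statement $B \prec B''$.

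I do not anticipate a real obstacle; the only point requiring a little care is the bookkeeping of the inequalities $i < i' < j' < j$ that guarantees the inner interval $\{i'+1,\dots,j'-1\}$ lies inside the outer interval $\{i+1,\dots,j-1\}$, and recalling that blocks are nonempty for the irreflexivity argument. Beyond that, the proof is a direct unwinding of the definitions together with Lemma \ref{lem:trichotomy}.
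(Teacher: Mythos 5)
Your proof is correct and follows essentially the same route as the paper: asymmetry via Lemma \ref{lem:trichotomy}, irreflexivity directly from the definition, and transitivity by unwinding the nesting condition (which the paper simply declares ``straightforward to check''; your chain $i<i'<j'<j$ argument is exactly the intended verification).
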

	
	\begin{proof}
		The previous lemma shows that $B \prec B'$ and $B' \prec B$ are mutually exclusive.  It is immediate that $B \prec B'$ implies $B \neq B'$, and straightforward to check that $\prec$ is transitive.
	\end{proof}
	
	Thus, $(\pi,\prec)$ is a poset.  Recall that for a poset, the \emph{covering relation} is the relation $R$ given by $xRy$ if $x < y$ and there is no $z$ with $x < z < y$.  In this case, we call $x$ a \emph{predecessor} of $y$.
	
	\begin{definition} \label{def: nesting forest}
		For $\pi \in \mathcal{NC}_k$, let $\mathrm{F}(\pi)$ be the digraph with vertex set $\pi$ and edges given by the covering relation of $(\pi,\prec)$.  That is, $B \rightsquigarrow B'$ if $B \prec B'$ and there is no $B''$ with $B \prec B'' \prec B'$.
	\end{definition}
	
	\begin{figure}
		\begin{center}
			\begin{tikzpicture}
				\node[circle,fill] (B1) at (0,0) [label=left: {$\{1,5\}$}] {};
				\node[circle,fill] (B2) at (0,-1) [label=left: {$\{2,4\}$}]{};
				\node[circle,fill] (B3) at (0,-2) [label=left: {$\{3\}$}]{};
				\node[circle,fill] (B4) at (4,0) [label=right: {$\{6,9,15,16\}$}]{};
				\node[circle,fill] (B5) at (3,-1) [label=left: {$\{7,8\}$}] {};
				\node[circle,fill] (B6) at (5,-1) [label=right: {$\{10,13,14\}$}] {};
				\node[circle,fill] (B7) at (5,-2) [label=right: {$\{11,12\}$}] {};
				
				\draw[->] (B1) to (B2);
				\draw[->] (B2) to (B3);
				\draw[->] (B4) to (B5);
				\draw[->] (B4) to (B6);
				\draw[->] (B6) to (B7);
			\end{tikzpicture}
		\end{center}
		
		\caption{The nesting forest of the partition $\pi$ from Figure \ref{fig: NC partition}.}
		\label{fig: nesting forest}
	\end{figure}
	
	\begin{lemma}
		Let $\pi \in \mathcal{NC}_k$ and $B \in \pi$. Then either $B$ is minimal with respect to $\prec$ or there is a unique $B'$ such that $B' \rightsquigarrow B$ in $\mathrm{F}(\pi)$.  In particular, $\mathrm{F}(\pi)$ is an out-forest with edges oriented away from the root of each component.
	\end{lemma}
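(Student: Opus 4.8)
The plan is to establish the structural fact about the poset $(\pi,\prec)$ directly, and then read off the forest statement. The first alternative is automatic: if $B$ is $\prec$-minimal there is nothing to prove. So I assume $B$ is not minimal and consider the set $\mathcal{C} = \{C \in \pi : C \prec B\}$, which is then nonempty. I claim $\mathcal{C}$ is totally ordered by $\prec$; granting this, the unique $B'$ with $B' \rightsquigarrow B$ will simply be the greatest element of the finite nonempty chain $\mathcal{C}$.

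To prove $\mathcal{C}$ is a chain, fix an index $b \in B$. For any $C \in \mathcal{C}$, unwinding the definition of nesting gives $i < j$ in $C$ with $B \subseteq \{i+1,\dots,j-1\}$, so in particular $i < b < j$; thus $b$ lies strictly between two elements of every block of $\mathcal{C}$. Now take distinct $C, C' \in \mathcal{C}$. By Lemma \ref{lem:trichotomy}, either $C \prec C'$, or $C' \prec C$, or $C$ and $C'$ are separated. The last case is impossible: if some index $\ell$ separated them, with (say) $C \subseteq \{1,\dots,\ell\}$ and $C' \subseteq \{\ell+1,\dots,k\}$, then picking witnesses $i<j$ in $C$ and $i'<j'$ in $C'$ as above yields $b < j \le \ell < \ell+1 \le i' < b$, a contradiction. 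Hence $C$ and $C'$ are $\prec$-comparable, so $\mathcal{C}$ is a chain. This ruling-out of the "separated" case is really the only substantive point; the rest is bookkeeping. The greatest element $B'$ of $\mathcal{C}$ then satisfies $B' \prec B$ with no $B''$ strictly between (such a $B''$ would lie in $\mathcal{C}$ and exceed $B'$), so $B' \rightsquigarrow B$; and any block $B''$ with $B'' \rightsquigarrow B$ is a $\prec$-maximal element of $\mathcal{C}$, hence equals $B'$, giving uniqueness.

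For the final assertion, recall that $\prec$ is a strict partial order on the finite set $\pi$, so $\mathrm{F}(\pi)$, whose edges are the covering pairs, contains no directed cycle; and by the first part every vertex either is $\prec$-minimal or has exactly one incoming edge. Let $S$ be the set of $\prec$-minimal blocks. Given any $B \in \pi$, repeatedly passing from a non-minimal vertex to its unique in-neighbor strictly decreases $\prec$, hence terminates after finitely many steps at an element of $S$ and exhibits a directed path from $S$ to $B$. This path is unique, because reading it from $B$ backwards the incoming edge at each successive vertex is forced by the first part. Therefore $\mathrm{F}(\pi)$ is a rooted forest with root set $S$, and it is a rooted tree precisely when $\pi$ has a single $\prec$-minimal block.
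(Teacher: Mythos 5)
Your proof is correct and takes essentially the same route as the paper: both arguments apply Lemma \ref{lem:trichotomy} to two blocks lying below $B$ and exclude the ``separated'' case because $B$ sits strictly inside each of them, which forces the set $\{C \in \pi : C \prec B\}$ to have a unique maximal element, namely the unique in-neighbor of $B$. Your explicit remark that this set is a chain, and your backward-path argument for the forest claim, correspond directly to the paper's (sketched) reasoning.
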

	
	\begin{proof}
		Suppose that $B$ is not minimal.  Then $\{B': B' \prec B\}$ is a finite poset and hence has a maximal element, so there exists some $B'$ with $B' \rightsquigarrow B$ in $\mathrm{F}(\pi)$.  To show that this $B'$ is unique, consider some other $B''$ with $B'' \prec B$.  By Lemma \ref{lem:trichotomy}, either $B' \prec B''$ or $B'' \prec B'$ or $B'$ and $B''$ are separated.  The case $B' \prec B'' \prec B$ cannot happen because we assumed that $B' \rightsquigarrow B$.  If $B'' \prec B'$, then we cannot have $B' \rightsquigarrow B$.  Finally, if $B'$ and $B''$ are separated, then there exists a partition of $[k]$ into two intervals $I'$ and $I''$ with $B' \subseteq I'$ and $B'' \subseteq I''$.  Since $B' \prec B$ and $I'$ is an interval, we have $B \subseteq I'$.  Hence, $B$ and $B''$ are separated, which contradicts $B'' \prec B$, so the case where $B'$ and $B''$ are separated also cannot happen.  This completes the proof of the first claim.
		
		To show that $\mathrm{F}(\pi)$ is a forest, one uses the first claim to construct a backward walk from any given $B$ to some $B'$ which is minimal in $\prec$ (that is, a directed walk from $B'$ to $B$) and check that this walk is unique.
	\end{proof}
	
	\begin{definition}[Nesting forest]
		For $\pi \in \mathcal{NC}_k$, we call $\mathrm{F}(\pi)$ the \emph{nesting (out-)forest} of $\pi$.  See Figure \ref{fig: nesting forest} for an example.
	\end{definition}
	
	\begin{notation}
		We denote by $\depth(B)$ the depth of a block $B$ in the out-forest $\mathrm{F}(\pi)$.  If $B$ is minimal, then $\depth(B) = 1$.  If $\depth(B) > 1$, then we denote by $\pred(B)$ the predecessor of $B$ (which is unique because $\mathrm{F}(\pi)$ is a out-forest).
	\end{notation}
	
	\section{$G$-independence} \label{sec: independence}
	
	Often non-commutative probability papers define  notions of independence of $*$-subalgebras $\mathcal{A}_1, \ldots, \mathcal{A}_n$ first through a condition on moments, and then use a Hilbert space model to show that, given any family of $\mathrm{C}^*$-probability spaces $(\mathcal{A}_1,\phi_1)$, \dots, $(\mathcal{A}_n,\phi_n)$, there exists some $(\mathcal{A},\phi)$ containing independent copies of all $(\mathcal{A}_j,\phi_j)$.  However, since the moment formula for $G$-independence (and even for mixtures of free and Boolean independences \cite{AKJW2010}) is much more complicated to state, we will begin with the Hilbert space model, explain how the moment formula arises naturally from the Hilbert space structure, and use this for our definition of independence.
	
	The digraph construction described here includes the the BM-product Hilbert space in \cite[\S 2]{JW2007}, \cite[\S 3.2]{JW2010} and \cite[\S 3.1]{LOJW3} as the special case when $G$ is given by a strict partial order (see also Proposition \ref{prop: equivalent definitions of BM independence}).  It also includes the BF-product Hilbert space from \cite[\S 2.2]{AKJW2010} (see Remark \ref{rem: BF independence}). The digraph construction itself is a special case of the more general tree construction of \cite{JekelLiu2020}, corresponding to the case when the tree arises as the set of walks on the digraph.  However, we want to present a self-contained explanation of the digraph case by itself, without the general tree framework or the operator-value probability setup, in order to reduce the number of prerequisites and simplify the intuition for the definition.  We also give more details for the proof of the moment formula than \cite{JekelLiu2020}.
	
	\subsection{Digraph products of pointed Hilbert spaces} \label{subsec: product space}
	
	\begin{definition}
		A \emph{pointed Hilbert space} is a pair $(\mathcal{H},\xi)$ where $\mathcal{H}$ is a Hilbert space and $\xi \in \mathcal{H}$ is a unit vector.  If $(\mathcal{H},\xi)$ is a pointed Hilbert space, we denote by $\mathcal{H}^{\circ}$ the orthogonal complement of $\C \xi$ in $\mathcal{H}$.
	\end{definition}
	
	\begin{notation}
		For a digraph $G = (V,E)$ by $E_m^\dagger$ we  will denote the set of reversed directed walks of length $m$:
		\[
		E_m^{\dagger} = \{(v_0,\dots,v_m): v_0 \leftsquigarrow v_1 \leftsquigarrow v_2 \leftsquigarrow \dots \leftsquigarrow v_m\}.
		\]
		Note that $E_0^\dagger = V$ and $E_1$ is the reversed edge set $E^\dagger$.
	\end{notation}
	
	\begin{definition}[$G$-product of pointed Hilbert spaces] \label{def:productHilbertspace}
		Let $G = (V,E)$ be a digraph, and let $(\mathcal{H}_v,\xi_v)_{v \in V}$ be a collection of pointed Hilbert spaces indexed by $V$.  We define $\assemb_G[(\mathcal{H}_v,\xi_v)_{v \in V}]$ as the pointed Hilbert space $(\mathcal{H},\xi)$ given by
		\begin{equation} \label{eq: G free product}
			\mathcal{H} = \C \xi \oplus \bigoplus_{m \geq 0} \bigoplus_{(v_0,\dots,v_m) \in E_m^\dagger} \mathcal{H}_{v_0}^\circ \otimes \dots \otimes \mathcal{H}_{v_m}^\circ.
		\end{equation}
	\end{definition}
	
	Here we can think of $\mathcal{H}_v$ as sitting inside $\mathcal{H}$ by identifying $\xi_v$ with $\xi$.
	
	\begin{definition} \label{def:inclusionmaps}
		Continuing with the notation of the previous definition, we define for each $v \in V$ a $*$-homomorphism $\iota_v: B(\mathcal{H}_v) \to B(\mathcal{H})$ as follows.  Let
		\[
		\mathcal{H}_{\rightsquigarrow v} = \C \xi \oplus \bigoplus_{m \geq 0} \bigoplus_{\substack{(v_0,\dots,v_m) \in E_m^\dagger v_0 \rightsquigarrow v}} \mathcal{H}_{v_0}^\circ \otimes \dots \otimes \mathcal{H}_{v_m}
		\]
		and
		\[
		\mathcal{H}_{\perp v} = \bigoplus_{m \geq 0} \bigoplus_{\substack{(v_0,\dots,v_m) \in E_m^{\dagger} \\ v_0 \neq v \\ v_0 \text{ not } \rightsquigarrow v}} \mathcal{H}_{v_0}^\circ \otimes \dots \otimes \mathcal{H}_{v_m}.
		\]
		By distributing tensor products over direct sums, we have a unitary isomorphism
		\begin{equation} \label{eq:UV}
			u_v: \mathcal{H} \to [(\C \oplus \mathcal{H}_v^{\circ}) \otimes \mathcal{H}_{\rightsquigarrow v}] \oplus \mathcal{H}_{\perp v} \to [\mathcal{H}_v \otimes \mathcal{H}_{\rightsquigarrow v}] \oplus \mathcal{H}_{\perp v},
		\end{equation}
		where the first term $\C \otimes \mathcal{H}_{\rightsquigarrow v} = \mathcal{H}_{\rightsquigarrow v}$ corresponds to reverse walks that start with a vertex $v' \rightsquigarrow v$, the second term $\mathcal{H}_v^{\circ} \otimes \mathcal{H}_{\rightsquigarrow v}$ corresponds to reverse walks that start with $v$, and the third term $\mathcal{H}_{\perp v}$ corresponds to all other reverse walks.  Then we define
		\[
		\iota_v(a) = u_v \left( [a \otimes \id_{\mathcal{H}_{\rightsquigarrow v}}] \oplus 0_{\mathcal{H}_{\perp V}} \right) u_v^* \text{ for } a \in B(\mathcal{H}_v).
		\]
	\end{definition}
	
	The next lemma shows that $\iota_v$ is expectation-preserving.
	
	\begin{lemma} \label{lem:expectationpreserving}
		With the setup and notation of the previous two definitions,
		\[
		\ip{\xi, \iota_v(a) \xi} = \ip{\xi_v, a \xi_v} \text{ for all } a \in B(\mathcal{H}_v) \text{ and } v\in V.
		\]
	\end{lemma}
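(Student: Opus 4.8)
The plan is to reduce the statement to identifying the image of the vacuum vector $\xi$ under the unitary $u_v$ of \eqref{eq:UV}.  Recall that $u_v$ realizes $\mathcal{H}$ as $[\mathcal{H}_v \otimes \mathcal{H}_{\rightsquigarrow v}] \oplus \mathcal{H}_{\perp v}$, and that, by the very definition of $\iota_v$, under this identification the operator $\iota_v(a)$ becomes $B := [a \otimes \id_{\mathcal{H}_{\rightsquigarrow v}}] \oplus 0_{\mathcal{H}_{\perp v}}$.  Hence
\[
\ip{\xi, \iota_v(a) \xi} = \ip{u_v\xi,\, B\, u_v\xi}.
\]
The key claim is that $u_v\xi = \xi_v \otimes \eta$, where $\eta$ denotes the distinguished unit vector spanning the $\C\xi$ summand of $\mathcal{H}_{\rightsquigarrow v}$, and $\xi_v$ is regarded as the element $(1,0)$ of $\mathcal{H}_v = \C \oplus \mathcal{H}_v^\circ$.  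Granting this, $B\, u_v\xi = (a\xi_v) \otimes \eta$, so
\[
\ip{\xi, \iota_v(a) \xi} = \ip{\xi_v \otimes \eta,\, (a\xi_v) \otimes \eta} = \ip{\xi_v, a\xi_v}\,\norm{\eta}^2 = \ip{\xi_v, a\xi_v},
\]
which is the assertion of the lemma.

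To establish the claim I would chase $\xi$ through the two isomorphisms composing $u_v$ in \eqref{eq:UV}.  The first is merely the regrouping of the orthogonal summands of $\mathcal{H}$ in \eqref{eq: G free product} furnished by distributivity of $\otimes$ over $\oplus$: the reverse paths starting at $v$ assemble into $\mathcal{H}_v^\circ \otimes \mathcal{H}_{\rightsquigarrow v}$ (the length-$0$ reverse path $(v)$ contributing the factor $\C \eta \subseteq \mathcal{H}_{\rightsquigarrow v}$), the reverse paths starting at a vertex $v_0 \rightsquigarrow v$ together with the $\C\xi$ summand assemble into $\C \otimes \mathcal{H}_{\rightsquigarrow v}$, and all remaining reverse paths give $\mathcal{H}_{\perp v}$.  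Under this regrouping the summand $\C\xi \subseteq \mathcal{H}$ is carried precisely onto $1 \otimes \eta \in \C \otimes \mathcal{H}_{\rightsquigarrow v}$.  The second isomorphism is the canonical unitary $\C \oplus \mathcal{H}_v^\circ \cong \mathcal{H}_v$ sending $1 \mapsto \xi_v$, tensored with $\id_{\mathcal{H}_{\rightsquigarrow v}}$ and direct-summed with $\id_{\mathcal{H}_{\perp v}}$; it sends $1 \otimes \eta$ to $\xi_v \otimes \eta$.  Composing the two gives $u_v\xi = \xi_v \otimes \eta$.

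The only delicate point — and the step I expect a reader to have to verify with care — is the bookkeeping in the previous paragraph, namely that $\xi$ really lands in the $\C \otimes \mathcal{H}_{\rightsquigarrow v}$ summand as $1 \otimes \eta$, and not in $\mathcal{H}_v^\circ \otimes \mathcal{H}_{\rightsquigarrow v}$ or $\mathcal{H}_{\perp v}$.  This comes down to reading off the index sets $E_m^\dagger$ in the definitions of $\mathcal{H}_{\rightsquigarrow v}$ and $\mathcal{H}_{\perp v}$ and observing that the $\C\xi$ summand is listed separately both in \eqref{eq: G free product} and inside $\mathcal{H}_{\rightsquigarrow v}$, so the regrouping isomorphism matches the former with $1 \otimes \eta$.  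Once this is settled, the two displayed computations above complete the proof, and the argument is otherwise routine.
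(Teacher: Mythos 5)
Your proof is correct and follows essentially the same route as the paper: the paper identifies the whole copy $\C\xi \oplus \mathcal{H}_v^\circ \cong \mathcal{H}_v \otimes \C\xi$ inside $\mathcal{H}_v \otimes \mathcal{H}_{\rightsquigarrow v}$ as an invariant subspace on which $\iota_v(a)$ acts as $a$, while you track only the image $u_v\xi = \xi_v \otimes \eta$ of the vacuum, which is the same bookkeeping restricted to the single vector needed. No gap.
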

	
	\begin{proof}
		The subspace $\C \xi \oplus \mathcal{H}_v^{\circ} \cong \mathcal{H}_v$ of $\mathcal{H}$ corresponds to the term $\mathcal{H}_v \otimes \C \subseteq \mathcal{H}_v \otimes \mathcal{H}_{\rightsquigarrow v}$ in \eqref{eq:UV}.  From this we see that $\C \xi \oplus \mathcal{H}_v^{\circ}$ is an invariant subspace of $\iota_v(a)$ on which $\iota_v(a)$ acts in the same way as $a$ acts on $\mathcal{H}_v = \C \xi_v \oplus \mathcal{H}_v^{\circ}$.  The conclusion of the lemma is immediate from this.
	\end{proof}
	
	This also allows us to define the digraph product of $\mathrm{C}^*$-probability spaces as follows.
	
	\begin{definition}[$G$-product of $\mathrm{C}^*$-probability spaces] \label{def: G free product probability space}
		Let $G$ be a finite digraph and for $v \in V$, let $(\mathcal{A}_v,\phi_v)$ be a $\mathrm{C}^*$-probability space.  Let $\pi_v: \mathcal{A}_v \to B(H_v)$ be the GNS representation associated to $\mathcal{A}_v$ and $\phi_v$, and let $\xi_v = \widehat{1}$ be the corresponding state vector.  Let $(H,\xi)$ be the $G$-product Hilbert space and $\iota_v: B(H_v) \to B(H)$ the corresponding inclusions.  Then the $G$-product\footnote{Note that in the case where $G$ is a complete graph, this corresponds to a \emph{reduced} free product of $\mathrm{C}^*$-probability spaces.} is the pair $(\mathcal{A},\phi)$ where $\mathcal{A}$ is the unital $\mathrm{C}^*$-subalgebra of $B(H)$ generated by $\iota_v \circ \pi_v(\mathcal{A}_v)$ for $v \in V$ and $\phi$ is the state $\phi(a) = \ip{\xi, a\xi}$.
	\end{definition}
	
	\begin{lemma}
		The $G$-product defined above is a $\mathrm{C}^*$-probability space. In particular, the GNS representation associated to the state $\phi$ is faithful.
	\end{lemma}
	
	\begin{proof}
		It is clear that $\mathcal{A}$ is a unital $\mathrm{C}^*$-algebra and $\phi$ is a state.  In order to show faithfulness of the representation, it suffices to show that the state vector $\xi$ is cyclic for $\mathcal{A}$.  Since $\mathcal{A}_v$ is assumed to be unital, we see that $\{\widehat{a}: a \in \mathcal{A}_v, \phi_v(a) = 0\}$ is dense in $H_v^\circ$.  Consider a reversed walk $v_0$, \dots, $v_m$ in the graph $G$, and for $j = 1$, \dots, $m$, let $a_j \in \mathcal{A}_{v_j}$ with $\phi_{v_j}(a_j) = 0$.  Then
		\[
		\iota_{v_0}(\pi_{v_0}(a_0)) \dots \iota_{v_m}(\pi_{v_m}(a_m)) \xi = \widehat{a}_0 \otimes \dots \otimes \widehat{a}_m.
		\]
		Thus, $\overline{\mathcal{A} \xi}$ contains $H_{v_0}^\circ \otimes \dots \otimes H_{v_m}^{\circ}$.  Since this holds for all reversed walks, we see that $\overline{\mathcal{A} \xi} = H$, or equivalently $\xi$ is cyclic, as desired.
	\end{proof}
	
	\subsection{Computation of joint moments} \label{subsec: moments}
	
	Let $G = (V,E)$ be a digraph and let $(\mathcal{H}_v,\xi_v)_{v \in V}$ be a family of pointed Hilbert spaces indexed by $V$.  Let $(\mathcal{H},\xi)$ be the $G$-product Hilbert space from Definition \ref{def:productHilbertspace} and let $\iota_v: B(\mathcal{H}_v) \to B(\mathcal{H})$ be the $*$-homomorphisms described in Definition \ref{def:inclusionmaps}.  Our goal is to compute
	\[
	\ip{\xi, \iota_{\ell(1)}(a_1) \dots \iota_{\ell(k)}(a_k) \xi},
	\]
	where $\ell: [k] \to V$ is a function and $a_j \in B(\mathcal{H}_{\ell(j)})$.  The resulting moment formula will be the basis for the definition of $G$-independence (see Definition \ref{def: G free independence}).
	
	Our formula expresses the joint moments in terms of Boolean cumulants and non-crossing partitions.  Although it may be counterintuitive to use solely Boolean cumulants to describe mixtures of free, Boolean, and monotone independence, the Boolean cumulants have already played an important role in analyzing free and c-free independence; see e.g.\ \cite{PMWJC2011,PV2013,FLKS2021}.  The situation is also analogous to the way that the moments for mixtures of free and classical convolution are expressed using only free cumulants in \cite[Theorem 5.2]{SpWys2016}.  We recall the definition of Boolean cumulants now.

	\begin{definition}
		A partition $\pi$ of $[k]$ is called an \emph{interval partition} if every block $B \in \pi$ has the form $B = \{l\in [k]:i\leq l \leq j\}$ for some $1 \leq i \leq j \leq k$.  We denote by $I_k$ the set of interval partitions of $[k]$.
	\end{definition}
	
	\begin{definition}
		Let $(\mathcal{A},\phi)$ be a $\mathrm{C}^*$-probability space.  We define Boolean cumulants $K_{\bool,k}: \mathcal{A}^k \to \C$ by
		\[
		K_{\bool,k}[a_1,\dots,a_k] = \sum_{\pi \in I_k} (-1)^{|\pi|-1} \prod_{B \in \pi} \phi \left( \prod_{j \in B}^{\longrightarrow} a_j \right),
		\]
		where $\vec{\prod}_{j \in B} a_j$ denotes the product of the $a_j$ for $j \in B$ written in order from left to right.
	\end{definition}
	
	We recall the following fact about Boolean cumulants (see \cite[Lemma 4.14]{JekelLiu2020}), which seems to be well known in combinatorial non-commutative probability; see for instance \cite[Lemma 2.9]{PV2013} and \cite[proof of Proposition 3.2.1]{ACG2023}.
	
	\begin{lemma} \label{lem: Boolean cumulants}
		Let $(\mathcal{A},\phi)$ be a $\mathrm{C}^*$-probability space and $(\mathcal{H},\xi)$ a pointed Hilbert space with $\mathcal{A} \subseteq B(\mathcal{H})$ and $\phi(a) = \ip{\xi, a \xi}$.  Let $P \in B(\mathcal{H})$ be the rank-one projection onto $\mathbb{C}\xi$ and $Q = 1 - P$.  Then
		\[
		K_{\bool,k}[a_1,\dots,a_k] = \ip{\xi, a_1 Q a_2 \dots Q a_k \xi}.
		\]
	\end{lemma}
	
	\begin{proof}
		Write
		\begin{align*}
			\ip{\xi, a_1 Q a_2 \dots Q a_k \xi} &= \ip{\xi, a_1(1 - P)a_2 \dots (1 - P)a_k \xi} \\
			&= \sum_{T_1, \dots, T_{k-1} \in \{1,-P\}} \ip{\xi, a_1 T_1 a_2 \dots T_{k-1} a_k \xi}.
		\end{align*}
		Consider the map from the set $I_k$ of interval partitions to the sequences $T_1$, \dots, $T_k$ from $\{1,-P\}$ that sends $\pi \in I_k$ to the sequence $T_1$, \dots, $T_k$ with $T_j = 1$ if $j$ and $j+1$ are in the same block of $\pi$ and $T_j = -P$ if $j$ and $j+1$ are in different blocks of $\pi$.  It is straightforward to check that this is a bijection. Moreover, if $T_1$, \dots, $T_k$ is the sequence associated to $\pi$, and if $j_1, \dots, j_{|\pi|-1}$ are the indices where $j$ and $j+1$ are in distinct blocks, then we have
		\begin{align*}
			\ip{\xi, a_1 T_1 a_2 \dots T_{k-1} a_k \xi} &= (-1)^{|\pi|-1} \ip*{\xi, a_1 \dots a_{j_1} P a_{j_1+1} \dots a_{j_2} \dots P a_{j_{|\pi|-1}} \dots a_k \xi} \\
			&= (-1)^{|\pi|-1} \ip{\xi, a_1 \dots a_{j_1} \xi} \dots \ip{\xi, a_{j_{|\pi|-1}} \dots a_k \xi} \\
			&= (-1)^{|\pi|-1} \prod_{B \in \pi} \phi \left( \prod_{j \in B}^{\longrightarrow} a_j \right).
		\end{align*}
		This is precisely the definition of $K_{\bool,k}[a_1,\dots,a_k]$, so the proof is complete.
	\end{proof}
	
	\begin{definition} \label{def: partition labeling compatible}
		Let $G = (V,E)$ be a digraph.  By a \emph{labeling of $[k]$} (by elements of  $V$), we mean a function $\ell: [k] \to V$.  For every such labeling, we denote by $\mathcal{NC}_k(\ell)$ the set of $\pi \in \mathcal{NC}_k$ such that $\ell$ is constant on each block of $\pi$. We say that $\pi$ and $\ell$ are \emph{compatible} if $\pi \in \mathcal{N}C_k(\ell)$.  See Figure \ref{fig: partition and labeling} for an example.
	\end{definition}
	
	\begin{figure}
		\begin{center}
			\begin{tikzpicture}
				\node[circle,fill] (1) at (1,0) [label=below:$1$] {};
				\node[circle,fill] (2) at (2,0) [label=below:$2$] {};
				\node[circle,fill] (3) at (3,0) [label=below:$3$] {};
				\node[circle,fill] (4) at (4,0) [label=below:$4$] {};
				\node[circle,fill] (5) at (5,0) [label=below:$5$] {};
				\node[circle,fill] (6) at (6,0) [label=below:$6$] {};
				\node[circle,fill] (7) at (7,0) [label=below:$7$] {};
				\node[circle,fill] (8) at (8,0) [label=below:$8$] {};
				\node[circle,fill] (9) at (9,0) [label=below:$9$] {};
				\node[circle,fill] (10) at (10,0) [label=below:$10$] {};
				
				\node at (1,-0.8) {$a$};
				\node at (2,-0.8) {$d$};
				\node at (3,-0.8) {$d$};
				\node at (4,-0.8) {$a$};
				\node at (5,-0.8) {$b$};
				\node at (6,-0.8) {$c$};
				\node at (7,-0.8) {$b$};
				\node at (8,-0.8) {$a$};
				\node at (9,-0.8) {$c$};
				\node at (10,-0.8) {$c$};
				
				\draw (1) -- (1,2) -- (8,2) -- (8);
				\draw (4) -- (4,2);
				\draw (2) -- (2,1) -- (3,1) -- (3);
				\draw (5) -- (5,1) -- (7,1) -- (7);
				\draw (9) -- (9,1) -- (10,1) -- (10);
			\end{tikzpicture}
		\end{center}
		
		\caption{A non-crossing partition $\pi = \{\{1,4,8\}, \{2,3\}, \{5,7\}, \{6\}, \{9,10\}\}$ compatible with a certain labeling $\ell: [10] \to V = \{a,b,c,d\}$.  See Definition \ref{def: partition labeling compatible}.}
		\label{fig: partition and labeling}
	\end{figure}
	
	\begin{definition} \label{def: partition labeling graph compatible}
		Let $G = (V,E)$ be a digraph and $\ell: [k] \to V$.  Let $\pi \in \mathcal{NC}_k(\ell)$.  Let $\tilde{\ell}: \pi \to V$ be the map given by $\tilde{\ell}(B) = \ell(j)$ for $j \in B$.  We define $\mathcal{NC}_k(\ell,G)$ as the set of $\pi \in \mathcal{NC}_k(\ell)$ such that $\tilde{\ell}$ defines a digraph homomorphism $\mathrm{F}(\pi) \to G$.  In this case, we say that \emph{$\pi$, $\ell$, and $G$ are compatible}.  See Figure \ref{fig: partition and labeling and graph} for an example.
	\end{definition}
	
	\begin{figure}
		\begin{center}
			\begin{tikzpicture}
				\begin{scope}
					\node at (1,0.8) {$\mathrm{F}(\pi)$};
					
					\node[circle,fill] (B1) at (0,0) [label=left:{$\{1,4,8\}$},label=right:$a$] {};
					\node[circle,fill] (B2) at (-1,-1) [label=left:{$\{2,3\}$},label=right:$d$] {};
					\node[circle,fill] (B3) at (1,-1) [label=left:{$\{5,7\}$},label=right:$b$] {};
					\node[circle,fill] (B4) at (1,-2) [label=left:{$\{6\}$},label=right:$c$] {};
					\node[circle,fill] (B5) at (3,0) [label=left:{$\{9,10\}$},label=right:$c$] {};
					
					\draw[->] (B1) to (B2);
					\draw[->] (B1) to (B3);
					\draw[->] (B3) to (B4);
				\end{scope}
				
				\node at (4,0.8) [label=above:$\ell$] {$\longrightarrow$};
				
				\begin{scope}[shift = {(7,0)}]
					\node at (0,0.8) {$G$};
					
					\node[circle,fill] (A) at (0,-0.2) [label=above:$a$] {};
					\node[circle,fill] (B) at (-1,-0.8) [label=left:$b$] {};
					\node[circle,fill] (C) at (0,-1.4) [label=below:$c$] {};
					\node[circle,fill] (D) at (1,-0.8) [label=right:$d$] {};
					
					\draw[->] (A) to (B);
					\draw[->] (B) to (C);
					\draw[->] (C) to (A);
					\draw[->] (A) to (D);
				\end{scope}
			\end{tikzpicture}
		\end{center}
		
		\caption{Compatibility between a non-crossing partition $\pi$, labeling $\ell$, and graph $G$.  Left: the nesting forest of the partition $\pi$ from Figure \ref{fig: partition and labeling}; each vertex has the block written on the left and the label on the right.  Right:  The graph $G$ on vertex set $V = \{a,b,c,d\}$.  Note that that if $B \rightsquigarrow B'$ in $\pi$, then $\ell(B) \rightsquigarrow \ell(B')$ in $G$.  See Definition \ref{def: partition labeling graph compatible}.}
		\label{fig: partition and labeling and graph}
	\end{figure}
	
	In the rest of the paper, we will slightly  abuse notation and use the same name $\ell$ for both $\ell$ and $\tilde{\ell}$ in the above definition.  In other words, when $\pi$ is compatible with $\ell$, we will view $\ell$ sometimes as defined on the indices of $[k]$ and sometimes as defined on the blocks of $\pi$.
	
	\begin{theorem} \label{thm:momentformula}
		Let $G = (V,E)$ be a digraph.  Let $(\mathcal{H}_v,\xi_v)_{v \in V}$ be a collection of pointed Hilbert spaces indexed by $v \in V$, let $(\mathcal{H},\xi)$ be the $G$-product as in Definition \ref{def:productHilbertspace}, and let $\iota_v: B(\mathcal{H}_v) \to B(\mathcal{H})$ for $v \in V$ be the $*$-homomorphisms given in Definition \ref{def:inclusionmaps}.  Let
		\begin{align*}
			\phi_v: & B(\mathcal{H}_v) \to \C, a \mapsto \ip{\xi_v, a \xi_v} \\
			\phi: & B(\mathcal{H}) \to \C, a \mapsto \ip{\xi, a\xi},
		\end{align*}
		so that $(B(\mathcal{H}_v),\phi_v)$ and $(B(\mathcal{H}),\phi)$ are $\mathrm{C}^*$-probability spaces and $\iota_v: B(\mathcal{H}_v) \to B(\mathcal{H})$ is expectation-preserving by Lemma \ref{lem:expectationpreserving}.
		
		For $k \in \N$ let $\ell: [k] \to V$ be a labeling, and let $a_j \in B(\mathcal{H}_{\ell(j)})$ for $j = 1$, \dots, $k$.  Then
		\[
		\ip{\xi, \iota_{\ell(1)}(a_1) \dots \iota_{\ell(k)}(a_k) \xi} = \sum_{\pi \in \mathcal{NC}_k(\ell,G)} \prod_{B \in \pi} K_{\bool,|B|}[a_j: j \in B].
		\]
		where for each block $B$, $K_{\bool,|B|}$ denotes the $|B|$th Boolean cumulant associated to $(B(\mathcal{H}_{\ell(B)}),\phi_{\ell(B)})$, and the arguments $a_j: j \in B$ are written in increasing order of their indices from left to right.
	\end{theorem}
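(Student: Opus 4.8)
The plan is to compute the vector $\iota_{\ell(1)}(a_1)\cdots\iota_{\ell(k)}(a_k)\xi$ explicitly, by applying the operators one at a time from right to left, and then read off its $\C\xi$-component. Write $P_v$ for the rank-one projection of $\mathcal{H}_v$ onto $\C\xi_v$ and $Q_v = 1-P_v$. The first step is a bookkeeping sublemma describing the action of a single $\iota_v(a)$ on $\xi$ and on an elementary tensor $\eta_0\otimes\cdots\otimes\eta_m$ attached to a reverse path $(w_0,\dots,w_m)\in E_m^\dagger$ (with $\eta_i\in\mathcal{H}_{w_i}^\circ$). Reading off \eqref{eq:UV}, exactly one of three things happens. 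If $w_0=v$, the tensor lies in the summand $\mathcal{H}_v^\circ\otimes\mathcal{H}_{\rightsquigarrow v}$ and $\iota_v(a)$ acts on the first leg, producing $\langle\xi_v,a\eta_0\rangle\,(\eta_1\otimes\cdots\otimes\eta_m)$ — attached to the reverse path $(w_1,\dots,w_m)$, or equal to $\xi$ if $m=0$ — plus $(Q_va\eta_0)\otimes\eta_1\otimes\cdots\otimes\eta_m$, still attached to $(w_0,\dots,w_m)$ but with modified first content. If $w_0\rightsquigarrow v$, or if we apply $\iota_v(a)$ to $\xi$, the tensor lies in $\C\xi_v\otimes\mathcal{H}_{\rightsquigarrow v}$ and we get $\phi_v(a)$ times the original vector plus $(Q_va\xi_v)\otimes\eta_0\otimes\cdots\otimes\eta_m$, attached to $(v,w_0,\dots,w_m)$ — a legitimate reverse path precisely because $w_0\rightsquigarrow v$. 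Otherwise the tensor lies in $\mathcal{H}_{\perp v}$ and is annihilated.

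Iterating this rule from $\xi$ while processing $a_k,a_{k-1},\dots,a_1$, and using that the linear span of $\xi$ and the elementary tensors is invariant, expresses $\iota_{\ell(1)}(a_1)\cdots\iota_{\ell(k)}(a_k)\xi$ as a sum over \emph{histories}. A history is a choice, at each step $j=k,\dots,1$, of which nonzero branch above is taken; equivalently it is the run of a last-in-first-out stack of labelled ``slots'' each carrying a content vector, where processing $a_j$ may: push a new slot labelled $\ell(j)$ with content $Q_{\ell(j)}a_j\xi_{\ell(j)}$ (``create''); replace the content $c$ of the top slot by $Q_{\ell(j)}a_jc$ and keep it (``modify''); replace it and then pop the top slot, accumulating the scalar $\langle\xi_{\ell(j)},a_jc\rangle$ (``destroy''); or leave the stack untouched, accumulating the scalar $\phi_{\ell(j)}(a_j)$ (``scalar''). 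Create and scalar are available exactly when the stack is empty or its top label $w_0$ satisfies $w_0\rightsquigarrow\ell(j)$; modify and destroy are available exactly when the top label equals $\ell(j)$. Pairing with $\xi$ kills every history whose final stack is nonempty, so only histories in which every created slot is later destroyed contribute.

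The core step is to match these ``graph-allowed'' contributing histories with $\mathcal{NC}_k(\ell,G)$. To a history associate the partition $\pi$ of $[k]$ whose blocks of size $\ge 2$ are the index sets $\{\text{create step},\ \text{modify steps},\ \text{destroy step}\}$ of its slots and whose singletons are its scalar steps; then $\ell$ is constant on blocks, and the stack discipline makes $\pi$ non-crossing, since any two slot lifetimes are either nested (one block lying strictly between two elements of the other) or disjoint in processing order (the blocks then being separated), and similarly for singletons. The crucial observation is that when step $j$ is processed, the top of the stack is exactly the slot of the $\prec$-predecessor in $\mathrm{F}(\pi)$ of the block $B\ni j$ (or the stack is empty, when $B$ is $\prec$-minimal in $\mathrm{F}(\pi)$): at a create or scalar step the innermost currently-open block is $\pred(B)$, while at an interior or minimum step of $B$ nothing nested inside $B$ can be open. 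Hence the availability condition $w_0\rightsquigarrow\ell(j)$ is precisely the requirement that $\tilde\ell$ send the covering edge $\pred(B)\rightsquigarrow B$ of $\mathrm{F}(\pi)$ to an edge of $G$, so graph-allowed histories correspond to $\pi$ with $\tilde\ell\colon\mathrm{F}(\pi)\to G$ a homomorphism, i.e.\ to $\mathcal{NC}_k(\ell,G)$; conversely each $\pi\in\mathcal{NC}_k(\ell,G)$ arises from a unique history, reconstructed by declaring each index a create, modify, destroy, or scalar step according to its position in its block and checking, by induction on the processing order, that the required branch is always available. Finally, following the content of the slot of a size-$\ge 2$ block $B=\{j_1<\cdots<j_r\}$ (with $\tilde\ell(B)=v$) through its lifetime — born as $Q_va_{j_r}\xi_v$, updated at each interior $j_s$ to $Q_va_{j_s}$ applied to the previous content, and consumed at $j_1$ — shows that the scalar it contributes is $\langle\xi_v,\,a_{j_1}Q_va_{j_2}Q_v\cdots Q_va_{j_r}\xi_v\rangle$, which equals $K_{\bool,r}[a_{j_1},\dots,a_{j_r}]$ by Lemma~\ref{lem: boolean cumulants} applied in $(B(\mathcal{H}_v),\phi_v)$; a singleton $\{j\}$ contributes $\phi_{\ell(j)}(a_j)=K_{\bool,1}[a_j]$. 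Multiplying over the blocks of $\pi$ and summing over $\pi\in\mathcal{NC}_k(\ell,G)$ yields the asserted formula.

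I expect the main obstacle to be exactly the combinatorial identification in the third paragraph: proving rigorously that the last-in-first-out bookkeeping produces all and only non-crossing partitions, that the top of the stack at step $j$ is the predecessor of $B\ni j$ in the nesting forest $\mathrm{F}(\pi)$, and hence that the availability constraints along a history correspond bijectively to the digraph-homomorphism condition cutting out $\mathcal{NC}_k(\ell,G)$. This requires a careful induction on the processing order, with particular attention to the boundary cases — when $j$ is the minimum or maximum of its block, and when a slot is pushed directly onto another versus onto one whose intervening sibling slots have already been popped.
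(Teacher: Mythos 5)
Your proposal is correct and follows essentially the same route as the paper: your branch/history expansion is exactly the paper's decomposition of each $a_j$ into the four corners $P a P$, $P a Q$, $Q a P$, $Q a Q$, your stack of slots is the paper's list of unfinished blocks in the inductive evaluation (Lemma \ref{lem: evaluation}), your LIFO argument is its height-function bijection with non-crossing partitions (Lemma \ref{lem: bijection}), and the per-block scalar is identified with a boolean cumulant via Lemma \ref{lem: boolean cumulants} just as in the paper. The only difference is presentational (you keep the original index order and phrase the induction as stack bookkeeping), not mathematical.
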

	
	\begin{remark}
		For each block $B$ of a partition $\pi$ in the above formula, since the map $\iota_{\ell(B)}$ is expectation-preserving, we could equivalently write
		\[
		\ip{\xi, \iota_{\ell(1)}(a_1) \dots \iota_{\ell(k)}(a_k) \xi} = \sum_{\pi \in \mathcal{NC}_k(\ell,G))} \prod_{B \in \pi} K_{\bool,|B|}[\iota_{\ell(j)}(a_j): j \in B].
		\]
	\end{remark}
	
	As explained in \S \ref{subsec: tree independence}, this result, and its proof, are a special case of \cite[Theorem 4.21]{JekelLiu2020}.  For the sake of exposition, we explain the proof in more detail for the particular case of $G$-independence.
	
	\begin{proof}[Proof of Theorem \ref{thm:momentformula}]
		As a notational convenience, let us reindex the operators $\iota_{\ell(1)}(a_1)$, \dots, $\iota_{\ell(k)}(a_k)$ in reverse order, so that $\iota_{\ell(1)}(a_1)$ is the right-most operator, i.e.\ it is applied to $\xi$ first.  Thus, we want to prove that
		\begin{equation} \label{eq: moment formula reverse}
			\ip{\xi, \iota_{\ell(k)}(a_k) \dots \iota_{\ell(1)}(a_1) \xi} = \sum_{\pi \in \mathcal{NC}_k(\ell,G))} \prod_{B \in \pi} K_{\bool,|B|}[a_j: j \in B],
		\end{equation}
		with the indices $a_j: j \in B$ now in \emph{decreasing} order for each block.
		
		Let $P_v \in B(\mathcal{H}_v)$ be the rank-one projection onto $\C \xi_v$, and let $Q_v = 1 - P_v$.  Let
		\begin{align*}
			a_j^{(0,0)} &= P_{\ell(j)} a_j P_{\ell(j)} \\
			a_j^{(0,1)} &= P_{\ell(j)} a_j Q_{\ell(j)} \\
			a_j^{(1,0)} &= Q_{\ell(j)} a_j P_{\ell(j)} \\
			a_j^{(1,1)} &= Q_{\ell(j)} a_j Q_{\ell(j)}.
		\end{align*}
		Then $a_j = a_j^{(0,0)} + a_j^{(0,1)} + a_j^{(1,0)} + a_j^{(1,1)}$.  We may thus write
		\begin{equation} \label{eq: 4 term expansion}
			\ip{\xi, \iota_{\ell(k)}(a_k) \dots \iota_{\ell(1)}(a_1) \xi} = \sum_{\delta_1,\epsilon_1,\dots,\delta_k,\epsilon_k \in \{0,1\}} \ip{\xi, \iota_{\ell(k)}(a_k^{(\delta_k,\epsilon_k)}) \dots \iota_{\ell(1)}(a_1^{(\delta_1,\epsilon_1)}) \xi}.
		\end{equation}
		Our goal is to show that certain of the terms in the sum vanish, while the others correspond to non-crossing partitions and evaluate to the product of Boolean cumulants in the asserted formula.  Note that $a_j^{(\delta,\epsilon)}$ annihilates $\mathcal{H}_{\ell(j)}^\circ$ when $\epsilon = 0$ and annihilates $\C \xi_{\ell(j)}$ when $\epsilon = 1$, and its image is contained in $\C \xi_{\ell(j)}$ when $\delta = 0$ and $\mathcal{H}_{\ell(j)}^\circ$ when $\delta = 1$.
		
		Examining the definition of the maps $\iota_v$ in Definition \ref{def:inclusionmaps}, we conclude the following.
		
		\begin{fact} \label{obs: where they map} ~
			\begin{itemize}
				\item $\iota_{\ell(j)}(a^{(0,0)})$ maps $\mathcal{H}_{v_0}^\circ \otimes \dots \otimes \mathcal{H}_{v_m}$ into itself if $v_0 \rightsquigarrow \ell(j)$, and vanishes on $\mathcal{H}_{v_0}^\circ \otimes \dots \otimes \mathcal{H}_{v_m}$ otherwise.
				\item $\iota_{\ell(j)}(a^{(1,0)})$ maps $\mathcal{H}_{v_0}^\circ \otimes \dots \otimes \mathcal{H}_{v_m}$ into $\mathcal{H}_{\ell(j)}^\circ \otimes \mathcal{H}_{v_0}^\circ \otimes \dots \otimes \mathcal{H}_{v_m}$ if $v_0 \rightsquigarrow \ell(j)$, and vanishes on $\mathcal{H}_{v_0}^\circ \otimes \dots \otimes \mathcal{H}_{v_m}$ otherwise.
				\item $\iota_{\ell(j)}(a^{(0,1)})$ maps $\mathcal{H}_{v_0}^\circ \otimes \dots \otimes \mathcal{H}_{v_m}$ into itself if $v_0 \rightsquigarrow \ell(j)$, and vanishes on $\mathcal{H}_{v_0}^\circ \otimes \dots \otimes \mathcal{H}_{v_m}$ otherwise.
				\item $\iota_{\ell(j)}(a^{(1,1)})$ maps $\mathcal{H}_{v_0}^\circ \otimes \dots \otimes \mathcal{H}_{v_m}$ into itself if $v_0 = \ell(j)$, and vanishes on $\mathcal{H}_{v_0}^\circ \otimes \dots \otimes \mathcal{H}_{v_m}$ otherwise.
			\end{itemize}
		\end{fact}
		
		With this information in mind, we can then consider the effect of applying several operators $\iota_{\ell(j)}(a^{(\delta_j,\epsilon_j)})$ consecutively to the state vector $\xi$, and thus determine which direct summand of the Hilbert space $\mathcal{H}$ contains the vector
		\[
		\iota_{\ell(j)}(a_j^{(\delta_1,\epsilon_1)}) \dots \iota_{\ell(1)}(a_1^{(\delta_1,\epsilon_1)}) \xi
		\]
		for each $j \leq k$.   First, to keep track of the number of tensorands, we introduce a \emph{height function} $h$ associated to the sequence of indices $(\delta_j,\epsilon_j)$.  Let
		\[
		h(m) = \sum_{j=1}^m (\delta_i - \epsilon_i).
		\]
		Note that $h(0) = 0$, and $h(j+1) - h(j) \in \{-1,0,1\}$.  By inductive application of the observations above, one can show that $\iota_{\ell(j)}(a_j) \dots \iota_{\ell(1)}(a_1) \xi$ is contained in one of the $h(j)$-fold tensor products among the direct summands in the definition of $\mathcal{H}$, provided that $h(i) \geq 0$ for $i \leq j$.  If $h(j)$ is ever $-1$, then the first time that $h(j) = -1$, we are applying an `annihilation operator' $\iota_{\ell(j)}(a_j^{(0,1)})$ to a multiple of the state vector $\xi$, which results in $\iota_{\ell(j)}(a_j^{(\delta_j,\epsilon_j)}) \dots \iota_{\ell(1)}(a_1^{(\delta_1,\epsilon_1)}) \xi = 0$.  Hence also, if $h(i) < 0$ for any $i \leq j$, then $\iota_{\ell(j)}(a_j^{(\delta_j,\epsilon_j)}) \dots \iota_{\ell(1)}(a_1^{(\delta_1,\epsilon_1)}) \xi = 0$.  Furthermore, at the last step, for the inner product to be nonzero, $\iota_{\ell(k)}(a_k^{(\delta_k,\epsilon_k)}) \dots \iota_{\ell(1)}(a_1^{(\delta_1,\epsilon_1)}) \xi$ must be in $\C \xi$, and hence $h(k) = 0$.
		
		Therefore, in the expansion \ref{eq: 4 term expansion}, only the summands which have a nonnegative height function $h$ with $h(k) = 0$ will remain.  We want to express these in terms of non-crossing partitions.  Thus, we recall the following fact, which is a generalization of the well-known bijection between non-crossing \emph{pair} partitions and Dyck paths.  We will not give the proof here in detail, since a similar argument is given in \cite[Lemma 4.24]{JekelLiu2020}.  However, note here that we are picturing the indices $1$, \dots, $k$ as running from \emph{right} to \emph{left}.
		
		\begin{lemma} \label{lem: bijection}
			There is a bijection between
			\begin{enumerate}[(1)]
				\item sequences $(\delta_1,\epsilon_1)$, \dots, $(\delta_k,\epsilon_k)$ whose height function $h$ is nonnegative and satisfies $h(k) = 0$, and
				\item non-crossing partitions $\pi \in \mathcal{NC}_k$,
			\end{enumerate}
			described by the following relationship:
			\begin{itemize}
				\item $\{j\}$ is a singleton in $\pi$ if and only if $(\delta_j,\epsilon_j) = (0,0)$.
				\item $\{j\}$ is the upper (left) endpoint of a non-singleton block in $\pi$ if and only if $(\delta_j,\epsilon_j) = (0,1)$.
				\item $\{j\}$ is the lower (right) endpoint of a non-singleton block in $\pi$ if and only if $(\delta_j,\epsilon_j) = (1,0)$.
				\item $\{j\}$ is in a non-singleton block and not an endpoint of the block if and only if $(\delta_j,\epsilon_j) = (1,1)$.
			\end{itemize}
			See Figure \ref{fig: partition and Motzkin path} for an example.
		\end{lemma}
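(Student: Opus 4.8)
The plan is to write down the inverse of the correspondence encoded by the four bullet points and verify that the two composites are identities, arguing throughout by tracking a stack of partially‑built blocks. Write $\Phi\colon\pi\mapsto\bigl((\delta_j,\epsilon_j)\bigr)_{j=1}^{k}$ for the map defined by the four rules---singleton $\mapsto(0,0)$; the largest index of a non‑singleton block (its ``left'' endpoint, since indices run right to left) $\mapsto(0,1)$; the smallest index (its ``right'' endpoint) $\mapsto(1,0)$; an interior index $\mapsto(1,1)$---where indices are processed in the order $1,2,\dots,k$, matching the order in which $\iota_{\ell(1)}(a_1),\dots,\iota_{\ell(k)}(a_k)$ are applied to $\xi$.

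First I would check that $\Phi(\pi)$ lies in the asserted target by computing its height function directly. A non‑singleton block $B$ contributes $+1$ to $\delta_j-\epsilon_j$ at $j=\min B$, $-1$ at $j=\max B$, and $0$ at every other index, while a singleton contributes $0$ everywhere; summing over blocks gives $h(m)=\#\{B\in\pi:\min B\le m<\max B\}$, which is manifestly nonnegative and equals $0$ at $m=k$ because $\max B\le k$ for every $B$. The same formula shows $h(m)\ge 1$ at every index $m$ with $(\delta_m,\epsilon_m)=(1,1)$. This is the one point where I would take care about the exact statement: the set of all sequences with $h\ge 0$ and $h(k)=0$ has cardinality the Catalan number $C_{k+1}$ rather than $C_k=|\mathcal{NC}_k|$, the excess being precisely the sequences containing a $(1,1)$ at a point where $h=0$; such sequences contribute $0$ to the four‑term expansion \eqref{eq: 4 term expansion} anyway, since there $\iota_{\ell(j)}(a_j^{(1,1)})$ is applied to a vector in $\C\xi$, which it annihilates. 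So the bijection is really with the equally natural subclass of admissible sequences in which a $(1,1)$ never sits at height $0$.

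Next I would construct the inverse as a stack procedure run over $j=1,\dots,k$: a $(0,0)$ emits the singleton $\{j\}$; a $(1,0)$ pushes the new one‑element block $\{j\}$; a $(1,1)$ appends $j$ to the block on top of the stack; a $(0,1)$ appends $j$ to the top block and then pops it into the output partition. Nonnegativity of $h$ is exactly what makes the pops legal---the stack height after step $j$ is $h(j)$, and a $(0,1)$ forces $h(j-1)\ge 1$ since $h(j)\ge 0$---the admissibility condition from the previous paragraph is exactly what makes the appends on $(1,1)$ steps legal, and $h(k)=0$ forces an empty stack at the end, so the output really is a partition of $[k]$.

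Finally I would show the output is non‑crossing and that the two maps are mutually inverse. Each non‑singleton block $B$ occupies the stack throughout the index interval $[\min B,\max B]$, so the LIFO discipline forces any two such intervals to be either nested or disjoint; by the argument of Lemma \ref{lem:trichotomy} and its easy converse, this nested‑or‑separated dichotomy for all pairs of blocks is equivalent to the partition having no crossing. Conversely, feeding $\Phi(\pi)$ into the stack procedure recovers $\pi$: the blocks get pushed in order of increasing minimum, and because $\mathrm{F}(\pi)$ is a forest this push order is compatible with the nesting, so each block is built up and popped exactly at the indices prescribed by $\pi$; that $\Phi$ applied to an output of the stack procedure returns the original sequence is then immediate from the four rules. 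I expect no serious obstacle here---it is all bookkeeping of the stack order against the nesting order, which is why the authors defer to the parallel argument of \cite[Lemma 4.24]{JekelLiu2020}---the only genuine subtlety being the Catalan‑count mismatch flagged above, which forces the admissibility restriction on the sequences.
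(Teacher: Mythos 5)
The paper itself does not prove this lemma---it defers to the parallel argument of \cite[Lemma 4.24]{JekelLiu2020}---so your stack-based argument genuinely supplies a proof rather than reproducing one. Your computation of the height function, $h(m)=\#\{B\in\pi:\min B\le m<\max B\}$, is correct, and your observation about the Catalan mismatch is a real and worthwhile sharpening: the sequences with $h\ge 0$ and $h(k)=0$ are bicolored Motzkin paths, counted by $C_{k+1}$ rather than $C_k=|\mathcal{NC}_k|$, so the correspondence is only a bijection onto the subclass in which a $(1,1)$ never occurs at height $0$ (already visible at $k=1$, where both $(0,0)$ and $(1,1)$ are admissible paths but $\mathcal{NC}_1$ is a singleton). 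Your remark that the excess sequences contribute $0$ to \eqref{eq: 4 term expansion}, because $\iota_{\ell(j)}(a_j^{(1,1)})$ annihilates $\C\xi$, is exactly the reason this imprecision is harmless in the proof of Theorem \ref{thm:momentformula}.

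There is, however, one step whose stated justification fails: the deduction of non-crossingness. You argue that the LIFO discipline forces the spanning intervals $[\min B,\max B]$ of any two blocks to be nested or disjoint, and then assert that this dichotomy ``is equivalent to the partition having no crossing.'' That equivalence is false for spanning intervals: $B=\{1,3,6\}$ and $B'=\{2,4,5\}$ have nested spanning intervals $[2,5]\subseteq[1,6]$ yet cross via $1<2<3<4$. The dichotomy of Lemma \ref{lem:trichotomy} uses the stronger notion of nesting (all of $B'$ lies in a single gap between consecutive elements of $B$), and only that version characterizes non-crossing partitions. The repair is available inside your own construction and should replace the interval argument: while $B'$ (or any block pushed after it) is open, $B$ is not on top of the stack, so $B$ receives no index in $(\min B',\max B')$; consequently a crossing $i<i'<j<j'$ with $i,j\in B$, $i',j'\in B'$ is impossible, since at time $j$ the block $B$ would have to be on top of the stack while $B'$, pushed after $B$ and not yet popped, sits above it. With that substitution, and with the admissibility restriction made explicit in the statement, your proof is complete and correct.
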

		
		\begin{figure}
			\begin{center}
				\begin{tikzpicture}[xscale=1.2]
					\node[circle,fill] (0) at (0,0) [label=below:{$(0,1)$}] {};
					\node[circle,fill] (1) at (1,0) [label=below:{$(0,0)$}] {};
					\node[circle,fill] (2) at (2,0) [label=below:{$(1,1)$}] {};
					\node[circle,fill] (3) at (3,0) [label=below:{$(0,1)$}] {};
					\node[circle,fill] (4) at (4,0) [label=below:{$(1,0)$}] {};
					\node[circle,fill] (5) at (5,0) [label=below:{$(1,0)$}] {};
					
					\draw (0) -- (0,2) -- (5,2) -- (5);
					\draw (2) -- (2,2);
					\draw (3) -- (3,1) -- (4,1) -- (4);
					
					\draw (-0.5,-3) node[circle,fill,inner sep=2pt] {} to (0.5,-2) node[circle,fill,inner sep=2pt] {} to (1.5,-2) node[circle,fill,inner sep=2pt] {} to (2.5,-2) node[circle,fill,inner sep=2pt] {} to (3.5,-1) node[circle,fill,inner sep=2pt] {} to (4.5,-2) node[circle,fill,inner sep=2pt] {} to (5.5,-3) node[circle,fill,inner sep=2pt] {};
				\end{tikzpicture}
			\end{center}
			
			\caption{Correspondence between non-crossing partitions and a sequences of indices $(\delta_j,\epsilon_j)$ with $h \geq 0$, as in Lemma \ref{lem: bijection}.  The partition is shown above the sequence of indices, and the height function is shown below.}
			\label{fig: partition and Motzkin path}
		\end{figure}
		
		Now given a non-crossing partition $\pi$, we need to evaluate the corresponding term $\ip{\xi, \iota_{\ell(k)}(a_k^{(\delta_k,\epsilon_k)}) \dots \iota_{\ell(1)}(a_1^{(\delta_1,\epsilon_1)}) \xi}$.  In particular, we must show it is zero unless $\pi$, $\ell$, and $G$ are compatible.
		
		We aim to evaluate $\iota_{\ell(j)}(a_j^{(\delta_j,\epsilon_j)}) \dots \iota_{\ell(1)}(a_1^{(\delta_1,\epsilon_1)}) \xi$ by induction on $j$.  To this end, we introduce more notation.  Let $\pi_j$ be the restriction of $\pi$ to $[j]$, which is a non-crossing partition.  Each block of $\pi_j$ is thus $B \cap [j]$ for some block $B$ in $\pi$.  A block of $\pi_j$, say $B \cap [j]$, is called \emph{finished} if $B \cap [j] = B$ and \emph{unfinished} otherwise.  Let $\mathrm{F}_j$ be the out-forest where there is an edge from $B \cap [j]$ to $B' \cap [j]$ in $\mathrm{F}_j$ if and only if there is an edge from $B$ to $B'$ in $\mathrm{F}(\pi)$, which is a subgraph of $\mathrm{F}(\pi)$.
		
		\begin{lemma} \label{lem: evaluation}
			If the labeling $\ell$ is constant on each block of $\pi_j$ and defines a homomorphism from $\mathrm{F}_j$ to $G$, then
			\begin{multline} \label{eq: expansion}
				\iota_{\ell(j)}(a_j^{(\delta_j,\epsilon_j)}) \dots \iota_{\ell(1)}(a_1^{(\delta_1,\epsilon_1)}) \xi \\ = \bigotimes_{r=m}^1 \left[ \left( \prod_{s \in B_r}^{\longleftarrow} Q_{\ell(B_r)} a_s \right) \xi_{\ell(B_r)} \right] \prod_{B \in \pi_j \text{ finished}} K_{\bool,|B|}[a_s: s \in B].
			\end{multline}
			where $B_1$, \dots, $B_m$ are the unfinished blocks of $\pi_j$, ordered by $\min B_1 < \dots < \min B_m$, and the terms in $\prod\limits_{s \in B_r}^{\longleftarrow} Q_{\ell(B_r)} a_s$ are multiplied from left to right in decreasing order of the index $s$.  Here, as above in \eqref{eq: moment formula reverse}, the terms $a_s: s \in B$ in the Boolean cumulant also run in decreasing order from left to right. In all other cases, $\iota_{\ell(j)}(a_j^{(\delta_j,\epsilon_j)}) \dots \iota_{\ell(1)}(a_1^{(\delta_1,\epsilon_1)}) \xi = 0$.
		\end{lemma}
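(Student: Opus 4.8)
The plan is to prove the lemma by induction on $j$, tracking how each operator $\iota_{\ell(j)}(a_j^{(\delta_j,\epsilon_j)})$ transforms the vector obtained at stage $j-1$. Write $\psi_j := \iota_{\ell(j)}(a_j^{(\delta_j,\epsilon_j)}) \cdots \iota_{\ell(1)}(a_1^{(\delta_1,\epsilon_1)})\xi$, and let $\pi \in \mathcal{NC}_k$ be the partition corresponding to $(\delta_1,\epsilon_1),\dots,(\delta_k,\epsilon_k)$ under Lemma \ref{lem: bijection}; by that lemma the role of the index $j$ in $\pi$ is read off from $(\delta_j,\epsilon_j)$. Call the pair of conditions ``$\ell$ is constant on each block of $\pi_j$'' and ``$\tilde{\ell}$ defines a homomorphism $\mathrm{F}_j \to G$'' the \emph{hypotheses at stage $j$}. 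The base case $j = 0$ is immediate: $\psi_0 = \xi$, $\pi_0$ has no blocks, the empty tensor product is by convention $\xi$ and the empty product of boolean cumulants is $1$, and the hypotheses hold vacuously.

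For the inductive step, note first that the hypotheses at stage $j$ imply those at stage $j-1$: each block of $\pi_{j-1}$ is a restriction of a block of $\pi_j$, so $\ell$-constancy descends, and the edges of $\mathrm{F}_{j-1}$ are in label-preserving correspondence with a subset of the edges of $\mathrm{F}_j$, so the homomorphism property descends. Contrapositively, if the hypotheses fail at $j-1$ then $\psi_{j-1} = 0$ by the inductive hypothesis and the hypotheses also fail at $j$, so both sides of \eqref{eq: expansion} vanish; thus we may assume the hypotheses hold at $j-1$, so that $\psi_{j-1}$ is given by the right-hand side of \eqref{eq: expansion} with unfinished blocks $B_1,\dots,B_m$ ordered by least element. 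A separate combinatorial claim — proved using only non-crossingness — records that $B_1,\dots,B_m$ form a chain $B_1 \rightsquigarrow \dots \rightsquigarrow B_m$ in $\mathrm{F}(\pi)$ with $B_1$ outermost and $B_m$ innermost in the nesting order (so that $(\ell(B_m),\dots,\ell(B_1))$ is the reverse path indexing the summand of $\mathcal{H}$ containing $\psi_{j-1}$), and that the index $j$ interacts only with $B_m$: according as $(\delta_j,\epsilon_j)$ is $(0,0)$, $(1,0)$, $(1,1)$, or $(0,1)$, it forms a new singleton block, opens a new block nested immediately inside $B_m$ (a new root if $m = 0$), joins $B_m$ as an interior point, or closes $B_m$ as its greatest element; in the last two cases necessarily $m \geq 1$. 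Accordingly $\mathrm{F}_j$ is obtained from $\mathrm{F}_{j-1}$ either by nothing new, or by growing the vertex $B_m$, or by adding a vertex $\{j\}$ with (when $m \geq 1$) a single incoming edge $B_m \rightsquigarrow \{j\}$.

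With this picture in place I would run through the four cases, in each one computing $\psi_j = \iota_{\ell(j)}(a_j^{(\delta_j,\epsilon_j)})\psi_{j-1}$ from Observation \ref{obs: where they map} together with the explicit formula for $\iota_v$ in Definition \ref{def:inclusionmaps}. In the creation case $(1,0)$, the operator prepends a new tensor leg $Q_{\ell(j)}a_j\xi_{\ell(j)}$ — the $\{j\}$-term of the target formula — provided the leftmost leg of $\psi_{j-1}$ (labelled $\ell(B_m)$, or the trivial leg $\mathbb{C}\xi$ when $m = 0$) satisfies $\ell(B_m) \rightsquigarrow \ell(j)$ in $G$, which is exactly the condition for the new edge $B_m \rightsquigarrow \{j\}$ of $\mathrm{F}_j$ to be respected; otherwise the leg is killed and $\psi_j = 0$. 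In the interior case $(1,1)$, the operator left-multiplies the top leg by $Q_{\ell(j)}a_j$ when it is labelled $\ell(j)$, i.e. when $\ell$ stays constant on $B_m \cup \{j\}$, thereby upgrading the $B_m$-term to the $(B_m \cup \{j\})$-term, and gives $0$ otherwise. In the annihilation case $(0,1)$, the operator contracts the top leg against $\xi_{\ell(j)}$, and by Lemma \ref{lem: boolean cumulants} the resulting scalar is exactly $K_{\bool,|B_m \cup \{j\}|}[a_s : s \in B_m \cup \{j\}]$ with arguments in decreasing order — the factor attached to the now-finished block $B_m \cup \{j\}$ of $\pi_j$ — and again the operator vanishes unless $\ell(B_m) = \ell(j)$. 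In the singleton case $(0,0)$, the operator acts as the scalar $\phi_{\ell(j)}(a_j) = K_{\bool,1}[a_j]$ on the top leg when $\ell(B_m) \rightsquigarrow \ell(j)$ (or when $m = 0$), attaching the factor for the new singleton $\{j\}$, and vanishes otherwise. In each instance one checks that the output coincides with the right-hand side of \eqref{eq: expansion} for $\pi_j$, with $B_m$ read as unchanged, enlarged, or finished as appropriate, and that it vanishes precisely when the hypotheses fail at $j$; this closes the induction.

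The main obstacle is the bookkeeping rather than any single hard idea: keeping the orientation of the tensor legs consistent with the right-to-left convention of \eqref{eq: moment formula reverse}, matching the non-crossing nesting of $\pi_j$ against the stack of legs of $\psi_j$, and — the one genuinely delicate point — recognizing in the annihilation case that the scalar produced by a contraction is precisely the boolean cumulant of the block just completed, which is exactly what Lemma \ref{lem: boolean cumulants} provides. The auxiliary combinatorial claim about the nesting chain of unfinished blocks, though routine, is where non-crossingness is actually used and deserves to be stated separately.
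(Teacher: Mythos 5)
Your proposal is correct and follows essentially the same route as the paper's own proof: induction on $j$, the auxiliary observation that the unfinished blocks form a nesting chain $B_1 \rightsquigarrow \dots \rightsquigarrow B_m$ in $\mathrm{F}(\pi)$ so that only $B_m$ interacts with the new index, and a four-case analysis of $(\delta_j,\epsilon_j)$ using Observation \ref{obs: where they map}, with Lemma \ref{lem: boolean cumulants} identifying the scalar produced in the annihilation case as the boolean cumulant of the completed block. No gaps; your handling of the $m=0$ subcase and of the failure of the compatibility hypotheses matches the paper's argument.
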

		
		\begin{proof}
			We proceed by induction.  The base case $j = 0$ is immediate; all the products are empty and so both sides evaluate to $\xi$. 
			For the induction step, suppose the claim is true for $j$ and we will prove it for $j + 1$.  For simplicity, let us denote by ($*$) the condition that the labeling $\ell$ is constant on each block of $\pi_j$ and defines a homomorphism from $\mathrm{F}_j$ to $G$.
			
			If ($*$) fails for $j$, then it also fails for $j + 1$.  By induction hypothesis,
			\[
			\iota_{\ell(j)}(a_j^{(\delta_j,\epsilon_j)}) \dots \iota_{\ell(1)}(a_1^{(\delta_1,\epsilon_1)}) \xi = 0,
			\]
			and hence also
			\[
			\iota_{\ell(j+1)}(a_{j+1}^{(\delta_{j+1},\epsilon_{j+1})}) \dots \iota_{\ell(1)}(a_1^{(\delta_1,\epsilon_1)}) \xi = 0.
			\]
			Thus, the claim holds for $j + 1$.
			
			Now suppose that ($*$) holds for $j$.  Note that
			\[
			\zeta_j := \iota_{\ell(j)}(a_j^{(\delta_j,\epsilon_j)}) \dots \iota_{\ell(1)}(a_1^{(\delta_1,\epsilon_1)}) \xi \in \bigotimes_{r=m}^1 \mathcal{H}_{\ell(B_r)}^\circ.
			\]
			Using the same notation as \eqref{eq: expansion} for the unfinished blocks, express $B_1$, \dots, $B_m$ as $[j] \cap B_1'$, \dots, $[j] \cap B_m'$ for blocks $B_1'$, \dots, $B_m'$ in $\pi$.  Note that $B_{s+1}'$ is nested inside $B_s'$ because $\min B_{s+1} > \min B_s$ but $B_s'$ contains an element greater than $\min B_{s+1}'$ because $B_s$ is unfinished in $\pi_j$.  Similar elementary reasoning with non-crossing conditions shows that there is no block strictly between $B_{s+1}$ and $B_s$ in the nesting order, so that $B_s' \rightsquigarrow B_{s+1}'$ in $\mathrm{F}(\pi)$, hence also $B_s \rightsquigarrow B_{s+1}$ in $\mathrm{F}_j$.
			
			We consider cases based on $(\delta_{j+1},\epsilon_{j+1})$.  
			\begin{enumerate}[(1)]
				\item Suppose $(\delta_{j+1},\epsilon_{j+1}) = (0,0)$, so that $a^{(0,0)}$ is a multiple of $P_{\ell(j+1)}$.  Thus, $\{j+1\}$ is a singleton block in $\pi$ that is nested inside $B_m'$. Moreover, $\{j+1\}$ is a finished block in $\pi_{j+1}$, and it is the only new vertex in $\mathrm{F}_{j+1}$ that was not in $\mathrm{F}_j$.  Thus, $\ell$ defines a homomorphism $\mathrm{F}_{j+1} \to G$ if and only if $\ell(B_m) \rightsquigarrow \ell(j+1)$.  Therefore, if $\ell(B_m) \not \rightsquigarrow \ell(j+1)$, then ($*$) fails for $j+1$ and $\iota_{\ell(j+1)}(a^{(0,0)}) \zeta_j = 0$ by Fact \ref{obs: where they map}.  On the other hand, if $\ell(B_m) \rightsquigarrow \ell(j+1)$, then since $\zeta_j \in \bigotimes_{j=m}^1 \mathcal{H}_{\ell(B_j)}^\circ$, we obtain $\iota_{\ell(j+1)}(a_{j+1}^{(0,0)}) \zeta_j = K_{\bool,1}(a_{j+1}) \zeta_j$; meanwhile, on the right-hand side of \eqref{eq: expansion}, a new term of $K_{\bool,1}(a_{j+1})$ is added for the new finished block $\{j+1\}$ in $\pi_{j+1}$.
				
				\item Suppose $(\delta_{j+1},\epsilon_{j+1}) = (1,0)$.  In this case $\{j+1\}$ is a singleton block in $\pi_{j+1}$ that is the right endpoint of a block in $\pi$.  Similar to case (1), $\ell$ defines a homomorphism $\mathrm{F}_{j+1} \to V$ if and only if $\ell(B_m) \rightsquigarrow \ell(j+1)$.  Therefore, if $\ell(B_m) \not \rightsquigarrow \ell(j+1)$, then ($*$) fails for $j+1$ and $\iota_{\ell(j+1)}(a^{(1,0)}) \zeta_j = 0$ by Fact \ref{obs: where they map}.  On the other hand, if $\ell(B_m) \rightsquigarrow \ell(j+1)$, then since $\zeta_j \in \bigotimes_{j=m}^1 \mathcal{H}_{\ell(B_j)}^\circ$, we obtain $\iota_{\ell(j+1)}(a_{j+1}^{(1,0)}) \zeta_j = a_{j+1} \xi_{\ell(j+1)} \otimes \zeta_j$; meanwhile, on the right-hand side of \eqref{eq: expansion}, a new term of $a_{j+1} \xi_{\ell(j+1)}$ is added in the tensor product expansion corresponding to the new unfinished block $\{j+1\}$ in $\pi_{j+1}$.
				
				\item Suppose that $(\delta_{j+1},\epsilon_{j+1}) = (0,1)$.  Then $j+1$ is added to the most recent unfinished block $B_m$ in $\pi_j$, and this block is now finished in $\pi_{j+1}$.  Thus, $\ell$ defines a homomorphism $\mathrm{F}_{j+1} \to V$ if and only if $\ell(j+1) = \ell(B_m)$.  If $\ell(j+1) \neq \ell(B_m)$, then $\iota_{\ell(j+1)}(a_{j+1}^{(1,1)}) \zeta_j = 0$ by Fact \ref{obs: where they map}.  On the other hand, if $\ell(j+1) = \ell(B_m)$, then
				\begin{multline*}
					\iota_{\ell(j+1)}(a_{j+1}^{(0,1)}) \bigotimes_{r=m}^1 \left[ \left( \prod_{s \in B_m}^{\longleftarrow} Q_{\ell(B_m)} a_s \right) \xi_{\ell(B_r)} \right] \\
					= \ip*{ \xi_{\ell(j+1)}, a_{j+1} \left( \prod_{s \in B_m}^{\longleftarrow} Q_{\ell(B_m)} a_s \right) \xi_{\ell(B_m)} } \bigotimes_{r=m-1}^1 \left[ \left( \prod_{s \in B_r}^{\longleftarrow} Q_{\ell(B_r)} a_s \right) \xi_{\ell(B_r)} \right] \\
					= K_{\bool,|B_m|+1}[a_s: s \in B_m \cup \{j+1\}] \bigotimes_{r=m-1}^1 \left[ \left( \prod_{s \in B_r}^{\longleftarrow} Q_{\ell(B_r)} a_s \right) \xi_{\ell(B_r)} \right].
				\end{multline*}
				This change is accounted for on the right-hand side of \eqref{eq: expansion} by removing the block $B_m$ from the tensor product expansion for the unfinished blocks, and adding a new term for $B_1 \cup \{j+1\}$ in the product expansion for the finished blocks.
				
				\item Suppose that $(\delta_{j+1},\epsilon_{j+1}) = (1,1)$.  In this case, $j+1$ is added to the most recent unfinished block $B_m$ in $\pi_j$.  Thus, $\ell$ defines a homomorphism $\mathrm{F}_{j+1} \to V$ if and only if $\ell(j+1) = \ell(B_m)$.  If $\ell(j+1) \neq \ell(B_m)$, then $\iota_{\ell(j+1)}(a_{j+1}^{(1,1)}) \zeta_j = 0$ by Fact \ref{obs: where they map}.  On the other hand, if $\ell(j+1) \neq \ell(B_m)$, then
				\begin{multline*}
					\iota_{\ell(j+1)}(a_{j+1}^{(1,1)}) \bigotimes_{r=m}^1 \left[ \left( \prod_{s \in B_m}^{\longleftarrow} Q_{\ell(B_m)} a_s \right) \xi_{\ell(B_r)} \right] \\
					= Q_{\ell(j+1)} a_{j+1} \left( \prod_{s \in B_m}^{\longleftarrow} Q_{\ell(B_m)} a_s \right) \xi_{\ell(B_m)} \bigotimes_{r=m-1}^1 \left[ \left( \prod_{s \in B_r}^{\longleftarrow} Q_{\ell(B_r)} a_s \right) \xi_{\ell(B_r)} \right].
				\end{multline*}
				This change is accounted for on the right-hand side of \eqref{eq: expansion} by adding a new term corresponding to $j+1$ onto the product of $a_s$'s for the block $B_m$.
				
			\end{enumerate}
			In each case, the induction proceeds and completes the proof of the lemma.
		\end{proof}
		
		Now looking at the result of the lemma in the case where $j = k$, there are no unfinished blocks, and hence no tensor product terms.  Thus, \eqref{eq: expansion} reduces to $\prod_{B \in \pi} K_{\bool,|B|}[a_j: j \in B]$.  Thus, by Lemmas \ref{lem: bijection} and \ref{lem: evaluation}, the terms that survive in \eqref{eq: 4 term expansion} correspond to partitions $\pi$ that are compatible with $\ell$ and $G$.  Therefore, we obtain \eqref{eq: moment formula reverse}, which completes the proof of Theorem \ref{thm:momentformula}.
	\end{proof}
	
	
	\subsection{Definition and examples of $G$-independence} \label{subsec: G independence}
	
	Now that we understand the combinatorics of moments for the $G$-product, we define independence as follows:
	
	\begin{definition}[$G$-independence] \label{def: G free independence}
		Let $(\mathcal{A},\phi)$ be a $\mathrm{C}^*$-probability space, let $G = (V,E)$ be a graph, and let $(\mathcal{A}_v)_{v \in V}$ be $*$-subalgebras.  We say that $(\mathcal{A}_v)_{v \in V}$ are \emph{$G$-independent} if for every $k \in \N$, for every labeling $\ell: [k] \to V$, and for all $a_j \in \mathcal{A}_{\ell_j}$ for $j = 1, \ldots, k$, we have
		\begin{equation} \label{eq: G free independence}
			\phi(a_1 \dots a_k) = \sum_{\pi \in \mathcal{NC}_k(\ell,G))} \prod_{B \in \pi} K_{\bool,|B|}[a_j: j \in B].
		\end{equation}
	\end{definition}
	
	In other words, $G$-independence means by definition that the algebras $\mathcal{A}_v$ have joint moments satisfying the combinatorial formula from Theorem \ref{thm:momentformula}.  If we assume that $(\mathcal{A}_v,\phi|_{\mathcal{A}_v})$ is $\mathrm{C}^*$-probability space, independence means that the joint moments of elements of $A_v$ agree with those of the corresponding operators on a $G$-product $\mathrm{C}^*$-probability space in Definition \ref{def: G free product probability space}.  More precisely, let $\phi_v$ be the restriction of the state $\phi$ to $A_v$, let $H_v$ be the GNS Hilbert space associated to $(A_v,\phi_v)$, let $\xi_v$ the vector obtained from the unit, and let $\pi_v: A_v \to B(H_v)$ be the GNS representation.  Let $(H,\xi)$ be the $G$-product of $(H_v,\xi_v)$ and $\iota_v$ the corresponding $*$-homomorphism $B(H_v) \to B(H)$ constructed in Definitions \ref{def:productHilbertspace} and \ref{def:inclusionmaps}.  Then, by Theorem \ref{thm:momentformula}, $G$-independence means that the joint moments of operators $A_v$ in $A$ with respect to $\phi$ are the same as the joint moments of the operators $\iota_v \circ \pi_v(A_v)$ with respect to the vector state given by $\xi$.
	
	Although for a general $G$ we do not know how to describe $G$-independence using any simpler condition  than \eqref{eq: G free independence}, special cases of $G$-independence have more tractable characterizations, such as vanishing of certain mixed moments, multiplicativity of the state on certain products, or vanishing of mixed cumulants.  For instance, \cite[\S 4.6, \S 7.3]{JekelLiu2020} discusses in depth how the familiar moment conditions used to define Boolean, monotone, and free independence relate to the general moment formula \eqref{eq: G free independence}.  Here we will focus on BM-independence, as well as commenting on BF-independence and comparing our construction with BMT independence.
	
	\begin{definition}[BM-independence {\cite{JW2010}}] \label{def: BM independence}
		Let $(V,\prec)$ be a strict poset and for $v,w\in V$ by $v\nsim w$ we denote the incomparability of the elements.  We say that a family $(\mathcal{A}_v)_{v \in V}$ in a $\mathrm{C}^*$-probability space $(\mathcal{A},\phi)$ is \emph{BM-independent} if the following conditions hold:
		\begin{description}
			\item[\textbf{BM1}] Suppose that $v_1 \prec v_2 \succ v_3$ or $v_1 \prec v_2 \nsim v_3$ or $v_1 \nsim v_2 \succ v_3$ and $a_1 \in \mathcal{A}_{v_1}$, $a_2 \in \mathcal{A}_{v_2}$, and $a_3 \in \mathcal{A}_{v_3}$.  Then
			\begin{equation}\label{bm1}
				a_1 a_2 a_3 = \phi(a_2) a_1 a_3.
			\end{equation}
			\item[\textbf{BM2}] For $k \in \N$, $\ell: [k] \to V$, and $a_j \in \mathcal{A}_{\ell(j)}$ for $j = 1, \ldots, k$, if $\ell(1) \succ \dots \succ \ell(s) \nsim \ell(s+1) \nsim \dots \nsim
			\ell(t) \prec  \ell(t+1) \prec \dots \prec \ell(k)$ for some $1\le s \leq t \le k$, then
			\begin{equation}\label{bm2}
				\phi(a_{1}\ldots a_{k}) = \prod_{j=1}^{k}\phi(a_{j}).
			\end{equation}
		\end{description}
	\end{definition}
	
	The conditions  \textbf{BM1} and \textbf{BM2} above allow one to compute all joint moments $\phi(a_1\cdots a_k)$ of bm-independent random variables $a_1, \ldots, a_k$ by \cite[Lemmas~2.3,2.4]{JW2010} and an algorithm to evaluate joint moments using these conditions is given in \cite[Remark 2.3]{LOJW1}.
	
	Recall from \S \ref{subsec: digraphs} that we view (strict) posets as a subclass of digraphs.  We will show that the definition of BM independence in \cite{JW2008} agrees with our more general definition of $G$-independence when $G$ is a strict poset.  It will be useful first to observe the following alternative description of $\mathcal{NC}_k(\ell,G)$ in this case.
	
	\begin{definition}[Strict BM order; {\cite[Definitions 3.8]{LOJW1}}]
		Let $(V,\prec)$ be a strict poset, let $\ell: [k] \to V$ be a labeling, and let $\pi \in \mathcal{NC}_k(\ell)$.  We say that $\ell$ \emph{establishes strict BM order on} $\pi$ if $B \prec B'$ in $\pi$ implies that $\ell(B) \prec \ell(B')$ in $V$.
	\end{definition}
	
	\begin{fact} \label{obs: BM homomorphism equivalence}
		Let $G = (V,E) = (V,\prec)$ be a strict partial order, which we also view as a digraph.  Let $\ell: [k] \to V$ and let $\pi \in \mathcal{NC}_k(\ell)$.  Then the following are equivalent:
		\begin{enumerate}
			\item $\ell$ establishes strict BM order on $\pi$.
			\item $\ell$ defines a digraph homomorphism $F(\pi) \to G$.
			\item $\ell$ defines a strict poset homomorphism\footnote{A strict poset homomorphism by definition is a map that preserves strict inequality $\prec$.} $\pi \to (V,\prec)$, where $\pi$ is equipped with the strict partial order given by nesting.
		\end{enumerate}
	\end{fact}
	
	\begin{proof}
		(1) $\iff$ (3) is immediate from the definitions.  Moreover, (2) $\iff$ (3) is immediate from the definitions and transitivity of $\prec$.
	\end{proof}
	
	Now we prove the equivalence of two definitions of independence given by a finite poset.
	
	\begin{proposition} \label{prop: equivalent definitions of BM independence}
		Let $G = (V,E)$ be a digraph such that $E = (\prec)$ defines a strict partial order.  Let $(\mathcal{A},\phi)$ be a $\mathrm{C}^*$-probability space and let $(\mathcal{A}_v)_{v \in V}$ be $*$-subalgebras that generate $\mathcal{A}$.  Then $(\mathcal{A}_v)_{v \in V}$ are BM independent in the sense of Definition \ref{def: BM independence} if and only if they are $G$-independent in the sense of Definition \ref{def: G free independence}.
	\end{proposition}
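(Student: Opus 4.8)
The plan is to reduce the proposition to a purely combinatorial statement about the right‑hand side of the moment formula in Theorem \ref{thm:momentformula}. For a labelling $\ell\colon[k]\to V$ and $a_j\in\mathcal{A}_{\ell(j)}$, write
\[
M(a_1,\dots,a_k) \;:=\; \sum_{\pi\in\mathcal{NC}_k(\ell,G)}\ \prod_{B\in\pi}K_{\bool,|B|}[a_j : j\in B],
\]
so that by Definition \ref{def: G free independence}, $(\mathcal{A}_v)_{v\in V}$ is $G$-freely independent precisely when $\phi(a_1\cdots a_k)=M(a_1,\dots,a_k)$ for all words. By Observation \ref{obs: BM homomorphism equivalence}, when $E=\prec$ is a strict partial order the index set $\mathcal{NC}_k(\ell,G)$ is exactly the set of $\pi\in\mathcal{NC}_k(\ell)$ on which $\ell$ establishes strict BM order. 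I would prove the key assertion that \emph{$M$ satisfies the identities \textbf{BM1} and \textbf{BM2}}, and then conclude as follows. If $(\mathcal{A}_v)$ is $G$-freely independent, its joint moments equal $M$, which obeys \textbf{BM1}, \textbf{BM2}, so the family is BM-independent. Conversely, by \cite[Lemmas~2.3,~2.4]{JW2010} the relations \textbf{BM1}, \textbf{BM2}, together with multilinearity and the fact that consecutive factors with the same label may be merged (the $\mathcal{A}_v$ being $*$-algebras), determine every joint moment of a BM-independent family from the restrictions $\phi|_{\mathcal{A}_v}$; since $M$ satisfies the same relations, respects the $*$-algebra structure, and restricts correctly to each $\mathcal{A}_v$ — all immediate from Theorem \ref{thm:momentformula} and Lemma \ref{lem:expectationpreserving}, as $M$ is the joint-moment functional of the $\iota_v$ — an induction on word length gives $\phi=M$ for every BM-independent family, i.e.\ $G$-free independence.

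It remains to establish the key assertion. For \textbf{BM1}, suppose position $j$ is in one of the three configurations of Definition \ref{def: BM independence}, so that in particular neither $v_{j-1}$ nor $v_{j+1}$ is strictly greater than $v_j$, at least one of them is strictly less than $v_j$, and both differ from $v_j$. I would first argue that $\{j\}$ must be a singleton block of every $\pi\in\mathcal{NC}_k(\ell,G)$: if the block $B\ni j$ contained a position to the left of $j$, then, since $v_{j-1}\neq v_j$, position $j-1$ would lie in a block strictly below $B$ in the nesting forest $\mathrm{F}(\pi)$, and following the nesting path downward and using that $\ell$ is a strict poset homomorphism of $\mathrm{F}(\pi)$ into $(V,\prec)$ would force $v_j=\ell(B)\prec v_{j-1}$, i.e.\ $v_{j-1}\succ v_j$, a contradiction; the case of a position to the right is symmetric, using $v_{j+1}\not\succ v_j$. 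Next I would show that deleting the singleton $\{j\}$ and relabelling $[k]\setminus\{j\}\cong[k-1]$ is a bijection from $\{\pi\in\mathcal{NC}_k(\ell,G):\{j\}\in\pi\}$ onto $\mathcal{NC}_{k-1}(\ell',G)$, where $\ell'$ omits $v_j$. Injectivity is clear. For surjectivity, re-inserting the singleton at position $j$ creates at most one new edge of the nesting forest, from the innermost block $B'$ whose span contains $j$ to $\{j\}$, and one checks that such a $B'$ satisfies $\ell(B')\preceq v_{j-1}$ and $\ell(B')\preceq v_{j+1}$ (because $B'$ either contains a position labelled $v_{j\mp 1}$ flanking the gap, or lies in $\mathrm{F}$ above a block so labelled); since at least one of $v_{j-1},v_{j+1}$ is strictly below $v_j$, this yields $\ell(B')\prec v_j$, so compatibility is preserved. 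As $K_{\bool,1}[a_j]=\phi(a_j)$, the bijection gives $M(a_1,\dots,a_k)=\phi(a_j)\,M(a_1,\dots,\widehat{a_j},\dots,a_k)$, which is \textbf{BM1}.

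For \textbf{BM2}, assume $v_1\succ\dots\succ v_k\nsim\dots\nsim v_\ell\prec\dots\prec v_n$; this forces $v_m\neq v_{m+1}$ for all $m$. I would show the only $\pi\in\mathcal{NC}_n(\ell,G)$ is the all-singletons partition, so that $M(a_1,\dots,a_n)=\prod_j K_{\bool,1}[a_j]=\prod_j\phi(a_j)$. Otherwise, pick a non-singleton block $B$ minimal for nesting among non-singleton blocks, so that every block nested in $B$ is a singleton; taking consecutive elements $i<i'$ of $B$ (necessarily $i'\ge i+2$), the positions $i+1,\dots,i'-1$ are then singletons appearing directly below $B$ in $\mathrm{F}(\pi)$, whence $v_i=\ell(B)\prec v_{i+1}$ and $v_{i'-1}\succ v_{i'}=\ell(B)=v_i$. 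But $v_i\prec v_{i+1}$ can occur only in the increasing segment, forcing $i\ge\ell$, while $v_{i'-1}\succ v_{i'}$ can occur only in the decreasing segment, forcing $i'\le k$; then $\ell\le i<i'\le k$ contradicts $k\le\ell$.

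I expect the main obstacle to be \textbf{BM1}, and within it the surjectivity of the delete-singleton map: that is the only place where the asymmetric poset hypotheses of the three \textbf{BM1} configurations are genuinely used, and verifying it requires careful tracking of where the re-inserted singleton attaches in the nesting forest. The forced-singleton step, the \textbf{BM2} argument, and the closing induction should all be routine once the nesting-forest bookkeeping is in place. (As an alternative to the combinatorics, one could identify the $G$-free product Hilbert space of Definition \ref{def:productHilbertspace} for a poset $G$ with the BM-product Hilbert space of \cite[\S 3.1]{LOJW3} and quote its known moment computation, but the route above is self-contained within this paper.)
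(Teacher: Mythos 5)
Your proposal is correct and follows essentially the same route as the paper's proof: for the forward direction you force $\{j\}$ to be a singleton and set up the delete/reinsert bijection on compatible partitions for \textbf{BM1} and show only the all-singleton partition survives for \textbf{BM2}, and for the converse you invoke the uniqueness of joint moments under BM-independence from \cite[Lemmas~2.3,2.4]{JW2010} together with the realization of the combinatorial formula on the $G$-free product via Theorem \ref{thm:momentformula}. The only differences are cosmetic (e.g.\ your \textbf{BM2} argument picks a deepest non-singleton block and uses both gap endpoints, while the paper case-splits on a single neighbouring position), so nothing further is needed.
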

	
	\begin{proof}
		First, suppose that Definition \ref{def: G free independence} holds.  We first need to show that $a_1 a_2 a_3 = \phi(a_2) a_1 a_3$ in the situation of \textbf{BM1}.  Because the GNS representation is assumed to be faithful, it suffices to show that $\phi(b_1 a_1 a_2 a_3 b_2) = \phi(a_2) \phi(b_1 a_1 a_3 b_2)$ for all $b_1$, $b_2 \in \mathcal{A}$, and by density and linearity, it suffices to consider $b_1$ and $b_2$ that are products of elements from the individual $\mathcal{A}_v$'s.  Hence, after changing notation, it suffices to prove the following claim:
		
		\textbf{BM1':} Let $k \in \N$, $\ell: [k] \to V$, and $a_j \in \mathcal{A}_{\ell(j)}$ for $j = 1, \ldots, k$.  Fix an index $j$, and suppose $\ell(j-1) \not \succeq \ell(j) \not \preceq \ell(j+1)$ or $\ell(j-1) \prec \ell(j) \nsim \ell(j+1)$ or $\ell(j-1) \nsim \ell(j) \succ \ell(j+1)$.  Then
		\begin{equation}\label{bm1a}
			\phi(a_1 \dots a_n) = \phi(a_j) \phi(a_1 \dots a_{j-1} a_{j+1} \dots a_n)
		\end{equation}

		Assume the hypothesis of \textbf{BM1'}, and we will show \eqref{bm1a}.  We will evaluate $\phi(a_1 \dots a_k)$ using \eqref{eq: G free independence} and show that it agrees with $\phi(a_j) \phi(a_1 \dots a_{j-1} a_{j+1} \dots a_k)$.
		
		We claim that for every partition $\pi$ appearing in \eqref{eq: G free independence}, $\{j\}$ must be a singleton in $\pi$.  Recall that a partition $\pi$ appears in \eqref{eq: G free independence} if and only if $\pi$ is consistently labelled by $\ell$ and the labeling defines a digraph homomorphism from $\mathrm{F}(\pi)$ to $G$, or equivalently it defines a strict poset homomorphism, that is, $B \prec B'$ in $\pi$ implies that $\ell(B) \prec \ell(B')$.   Now let $B$ be the block containing $j$.  Suppose for contradiction that there is some $i < j$ in $B$.  Since $\ell(j-1) \neq \ell(j)$, we see that $i \neq j-1$ and the block containing $j-1$ is nested immediately inside $B$, and so we would need $\ell(j) \prec \ell(j-1)$, but this contradicts our assumption that $\ell(j-1) \prec \ell(j)$ or $\ell(j-1) \nsim \ell(j)$.  Similarly, if we assume for contradiction that there is some $i > j$ in $B$, then we obtain a contradiction by a symmetrical argument since the block of $j+1$ would be nested immediately inside $B$.
		
		Since $\pi$ has a singleton block at $j$, we obtain a non-crossing partition $\pi' = \pi \setminus \{\{j\}\}$ of $[k] \setminus \{j\}$.  Note that $\pi'$ is compatible with $\ell' = \ell_{[k] \setminus \{j\}}$ and $G$.  Conversely, we claim that every partition $\pi'$ compatible with $\ell'$ and $G$ arises in this way, or equivalently, for every such $\pi'$, the partition $\pi' \cup \{\{j\}\}$ of $[k]$ is compatible with $\ell$ and $G$.  To this end, we must consider some blocks $B_1$ and $B_2$ in $\pi$ with $B_2$ immediately nested inside $B_1$.  Since $\pi'$ is already compatible with $\ell'$ and $G$, the only case to check is when $B_2 = \{j\}$.  Note that either $\ell(j-1) \prec \ell(j)$ or $\ell(j+1) \prec \ell(j)$.  Suppose that $\ell(j-1) \prec \ell(j)$. We have two subcases:
		\begin{itemize}
			\item If $B_1$ contains $\ell(j-1)$, then $\ell(B_1) = \ell(j-1) \prec \ell(j) = \ell(B_2)$, so we are done.
			\item If $B_1$ does not contain $\ell(j-1)$, then the block $B_3$ containing $\ell(j-1)$ is nested inside $B_1$, and hence $\ell(B_1) \prec \ell(B_3) = \ell(j-1) \prec \ell(j) = \ell(B_2)$, so again we are done.
		\end{itemize}
		In the case where $\ell(j+1) \prec \ell(j)$, the argument is symmetrical.
		
		Therefore, we obtain that
		\begin{align*}
			\phi(a_1 \dots a_k) &= \sum_{\pi \in \mathcal{NC}_k(\ell,G)} \prod_{B \in \pi} K_{\bool,|B|}[a_i: i \in B] \\
			&= K_{\bool,1}[a_j] \sum_{\pi' \in \mathcal{NC}_{[k] \setminus \{j\}}(\ell',G)} \prod_{B \in \pi}K_{\bool,|B|}[a_i: i \in B], \\
			&= \phi(a_j) \phi(a_1 \dots a_{j-1} a_{j+1} \dots a_k),
		\end{align*}
		where $\mathcal{NC}_{[k] \setminus \{j\}}(\ell',G)$ denotes the set of non-crossing partitions of $[k] \setminus \{j\}$ that are compatible with $G$ and $\ell'$.
		
		Next to check \textbf{BM2}, suppose that $\ell(1) \succ \dots \succ \ell(s) \nsim \ell(s+1) \nsim \dots \nsim
		\ell(t) \prec  \ell(t+1) \prec \dots \prec \ell(k)$ for some $1\le s \leq t \le k$.  Let $\pi$ be a partition compatible with $G$ and $\ell$.  We claim that $\pi$ consist entirely of singletons.  Suppose for contradiction that $i$ and $j$ are in the same block $B_1$ and $i < j$. Since $i < j$ and $s \leq t$, we must have either $i < t$ or $j > s$.  Suppose that $i < t$.  Let $B_2$ be the block containing $i+1$.  Then $B_1 \neq B_2$ since our assumption on $\ell$ implies that consecutive indices have distinct labels.  Since $i < t$, we have that $\ell(i) \not \preceq \ell(i+1)$ by our assumptions on $\ell$, and this contradicts the condition $\ell(B_1) \prec \ell(B_2)$ needed for $\pi$ to be compatible with $\ell$ and $G$.  If $j > s$, we obtain a contradiction by a symmetrical argument.  Thus, the only possibility is that $\pi$ consists of singletons, and therefore \eqref{eq: G free independence} reduces to $\phi(a_1 \dots a_k) = K_{\bool,1}[a_1] \dots K_{\bool,1}[a_k] = \phi(a_1) \dots \phi(a_k)$.
		
		Therefore, we have shown that Definition \ref{def: G free independence} implies Definition \ref{def: BM independence}.  Conversely, suppose that Definition \ref{def: BM independence} holds.  Let $\psi_v = \phi|_{\mathcal{A}_v}$.  Construct another $\mathrm{C}^*$-probability space $(\mathcal{B},\psi)$ as the $G$-product of $(\mathcal{A}_v,\psi_v)$, and let $\mathcal{B}_v$ be the image of $\mathcal{A}_v$ in $\mathcal{B}$.  Then the $\mathcal{B}_v$'s are $G$-independent by Theorem \ref{thm:momentformula}.  Therefore, also the $\mathcal{B}_v$'s are BM-independent by the preceding argument.  By \cite[Lemmas~2.3,2.4]{JW2010}, BM-independence uniquely determines the joint moments of elements from the different algebras. 
		Since the $\mathcal{A}_v$'s and the $\mathcal{B}_v$'s are both BM-independent, the joint moments of elements $a_j \in \mathcal{A}_{\ell(j)}$ for $j = 1$, \dots, $k$ viewed inside $\mathcal{A}$ must be the same as their joint moments when viewed inside $\mathcal{B}$.  Thus, since the $\mathcal{B}_v$'s are $G$-independent in $\mathcal{B}$, it follows that the $\mathcal{A}_v$'s are $G$-independent in $\mathcal{A}$.
	\end{proof}
	
	\begin{remark}[BF-independence] \label{rem: BF independence}
		The BF-independence construction from \cite{AKJW2013} is a special case of our construction in \S \ref{subsec: product space}.  BF-products of Hilbert spaces \cite[\S 2.2]{AKJW2013} are defined based on a poset $(V,\prec)$; the product space includes all summands of the form $H_{v_0}^\circ \otimes \dots \otimes H_{v_m}^\circ$ such that $v_j$ is comparable with but not equal to $v_{j+1}$.  Hence, this is the digraph product corresponding to the digraph $(V, (\prec) \cup (\succ))$ where $v \rightsquigarrow w$ if and only if $v \prec w$ or $w \prec v$.
		
		The partitions $\mathcal{NC}_k(\ell,G)$ are also related to the BF-ordered partitions in \cite[Definition 3.3]{AKJW2013}.  Specifically, if $\ell$ is a labeling, then a partition $\pi$, compatible with $\ell$, is BF-ordered if whenever $B$ is directly nested inside $B'$, then $\ell(B)$ is comparable with $\ell(B')$.  On the other hand, $\pi \in \mathcal{NC}_k(\ell,G)$ requires that $\ell(B) \rightsquigarrow \ell(B')$, or that $\ell(B)$ and $\ell(B')$ are comparable \emph{and not equal}.  Thus, $\mathcal{NC}(\ell,G)$ is the ``strict'' version of BF-ordered partitions where we forbid equality in the underlying relation.
		
		Unlike the work on BM-independence, the work on BF-independence did not define a general moment condition for independence, only a Hilbert space construction.  Hence, we do not know any analog of Definition \ref{def: BM independence} and Proposition \ref{prop: equivalent definitions of BM independence} for the BF case.  However, our work does provide a moment formula for the BF independence by specializing Theorem \ref{thm:momentformula} and Definition \ref{def: G free independence} to this case.
	\end{remark}
	
	\begin{remark}[Comparison with BMT independence] \label{rem: BMF versus BMT}
		Arizmendi, Mendoza, and Vazquez-Becerra \cite{OASRMJVB2025} have defined BMT independence which provides a mixture of Boolean, monotone, and tensor independence given by digraphs.  In the case where the digraph is a partial order, this extends the notion of BM independence from \cite{JW2007,JW20072} (see \cite[Theorem 3.9]{OASRMJVB2025}) and hence also agrees with the notion of digraph independence in the present work.  However, we caution that the direction of the edges is reversed; here because we use $E_k^{\dagger}$ in \S \ref{subsec: product space} (following \cite[Definition 3.18]{JekelLiu2020}), monotone independence of $n$ elements, for example, corresponds to the digraph $([n],<)$, while in \cite{OASRMJVB2025} monotone independence corresponds to the digraph $([n],>)$.  Correspondingly, \cite[Remark 3.8]{OASRMJVB2025} reverses the direction of edges when turning a partial order into a digraph.
		
		For a digraph $G = (V,E)$ where each pair of vertices has an edge in at most one direction (or each pair of algebras is Boolean, monotone, or anti-monotone independent), both our construction and the construction of \cite{OASRMJVB2025} make sense.  However, in general, they give \emph{different} notions of independence.  For example, let $G$ be a $3$-cycle with vertices $V = \{1,2,3\}$ and $E = \{\{1,2\},\{2,3\},\{3,1\}\}$, and let $G^\dagger$ be the reversed graph $(V,E^\dagger)$.  Let $(\cA_j,\phi_j)$ be $\mathrm{C}^*$-probability spaces for $j = 1, 2, 3$ and let $(\mathcal{H}_j,\xi_j)$ be the corresponding GNS space.  Let $(H,\xi)$ be the $G$-product space, let $\cA$ be the $\mathrm{C}^*$-probability space generated by $\iota_j(\cA_j)$, $j = 1$, $2$, $3$, and let $\phi$ be the state induced by $\xi$.  Suppose $a_j \in \cA_j$ be self-adjoint with $\phi_j(a_j) = 0$ and $\phi_j(a_j^2) = 1$.  Let $\widehat{a}_j = a_j \xi_j$ denote the vector in the GNS space corresponding to the operator $a_j$.  Note that $\mathcal{H}$ contains a summand $\mathcal{H}_3^\circ \otimes \mathcal{H}_2^\circ \otimes \mathcal{H}_1^\circ$ since $(3,2,1) \in E_2^\dagger$.  Therefore,
		\begin{align*}
			\phi(\iota_1(a_1) \iota_2(a_2) \iota_3(a_3) \iota_3(a_3) \iota_2(a_2) \iota_1(a_1)) &= \norm{\iota_3(a_3) \iota_2(a_2) \iota_1(a_1) \xi}_{\mathcal{H}}^2 \\
			&= \norm{\widehat{a}_3 \otimes \widehat{a}_2 \otimes \widehat{a}_1}_{\mathcal{H}_3^\circ \otimes \mathcal{H}_2^\circ \otimes \mathcal{H}_1^\circ}^2 \\
			&= \norm{\widehat{a}_3}_{\mathcal{H}_3^\circ}^2 \norm{\widehat{a}_2}_{\mathcal{H}_2^\circ}^2 \norm{\widehat{a}_1}_{\mathcal{H}_1^\circ}^2 \\
			&= \phi_3(a_3^2) \phi_2(a_2^2) \phi_1(a_1^2) \\
			&= 1.
		\end{align*}
		On the other hand, we claim that the analogous moment in the BMT construction is zero.  Applying \cite[Definition 4.1]{OASRMJVB2025} with the $3$-cycle digraph $G^{\dagger}$, we obtain the three inclusions $\pi_j: \cA_j \to B(\mathcal{H}_1 \otimes \mathcal{H}_2 \otimes \mathcal{H}_3)$:
		\begin{align*}
			\pi_1(a_1) &= a_1 \otimes P_2 \otimes 1 \\
			\pi_2(a_2) &= 1 \otimes a_2 \otimes P_3 \\
			\pi_3(a_3) &= P_1 \otimes 1 \otimes a_3,
		\end{align*}
		where $P_j \in B(H_j)$ is the rank-one projection onto $\xi_j$.  The state for BMT independence is given by the vector $\xi_1 \otimes \xi_2 \otimes \xi_3$.  Therefore,
		\begin{align*}
			\phi(\pi_1(a_1) \pi_2(a_2) \pi_3(a_3) \pi_3(a_3) \pi_2(a_2) \pi_1(a_1)) &= \norm{\pi_3(a_3) \pi_2(a_2) \pi_1(a_1) (\xi_1 \otimes \xi_2 \otimes \xi_3)}_{\mathcal{H}_1 \otimes \mathcal{H}_2 \otimes \mathcal{H}_3}^2 \\
			&= \norm{\pi_3(a_3) \pi_2(a_2)(a_1 \xi_1 \otimes \xi_2 \otimes \xi_3)}_{\mathcal{H}_1 \otimes \mathcal{H}_2 \otimes \mathcal{H}_3}^2 \\
			&= \norm{\pi_3(a_3)(a_1 \xi_1 \otimes a_2 \xi_2 \otimes \xi_3)}_{\mathcal{H}_1 \otimes \mathcal{H}_2 \otimes \mathcal{H}_3}^2 \\
			&= \norm{P_1 a_1 \xi_1 \otimes a_2 \xi_2 \otimes a_3 \xi_3}_{\mathcal{H}_1 \otimes \mathcal{H}_2 \otimes \mathcal{H}_3}^2 \\
			&= 0
		\end{align*}
		since $P_1 a_1 \xi_1 = \ip{\xi_1, a_1 \xi_1} \xi_1 = \phi_1(a_1) \xi_1 = 0$.
	\end{remark}
	
	\subsection{Relationship with general tree independence} \label{subsec: tree independence}
	
	Now let us explain the relationship with tree independence from \cite{JekelLiu2020} and \cite{JDW2021}, which is necessary since we will use results from \cite{JDW2021} later on.  Note that in \cite{JekelLiu2020} the construction was done in the $\cB$-valued setting, while here we only consider the scalar-valued setting where $\cB = \C$.
	
	Let $\mathcal{T}_{\operatorname{free},n}$ be the rooted tree described as follows.  The vertices are the alternating strings on the alphabet $[n]$, including the empty string.  The empty string is the root vertex of $\mathcal{T}_{\operatorname{free},n}$. Each vertex $j_m \ldots j_1$ in the tree has edges to the vertices $j j_m \dots j_1$ for $j \neq j_m$.
	
	Let $\mathcal{T}$ be a connected subtree of $\mathcal{T}_{\operatorname{free},n}$, and let $(\mathcal{H}_1,\xi_1)$, \dots, $(\mathcal{H}_n,\xi_n)$ be pointed Hilbert spaces.  Then define $\assemb_{\mathcal{T}}[(\mathcal{H}_1,\xi_1),\dots,(\mathcal{H}_n,\xi_n)]$ as the pair $(\mathcal{H},\xi)$ where
	\begin{equation} \label{eq: T free product}
		\mathcal{H} = \bigoplus_{m \geq 0} \bigoplus_{j_m \dots j_1 \in \mathcal{T}} \mathcal{H}_{j_m}^\circ \otimes \dots \otimes \mathcal{H}_{j_1}^\circ.
	\end{equation}
	Here, by convention, the only word of length zero is the empty string $\emptyset$, and the corresponding summand in $\mathcal{H}$ is $\C \xi$, which is the ``empty tensor product.''  This is a generalization of the construction we already explained for digraphs in \S \ref{subsec: product space} .  Indeed, if $G = (V,E)$ is a digraph on the vertex set $V = \{1,\ldots,n\}$, and we let
	\[
	\operatorname{Walk}(G) = \{j_m \dots j_1: m \geq 0, j_m \leftsquigarrow j_{m-1} \leftsquigarrow \dots \leftsquigarrow j_1\} = \{\varnothing\} \sqcup \bigsqcup_{m \geq 0} E_m^{\dagger}
	\text{ , }\]
	then, taking $\mathcal{T} = \operatorname{Walk}(G)$, the $\cT$-free product \eqref{eq: T free product} will reduce to $\mathcal{T}$-free product \eqref{eq: G free product}.  See also \cite[Definition 3.18]{JekelLiu2020}.
	
	In the general setting of $\cT$-free products, the inclusion maps $B(\mathcal{H}_j) \to B(\mathcal{H})$ are given as follows.  Let
	\begin{align*}
		S_j &= \{ j_m \dots j_1 \in \cT \text{ such that } j j_m \dots j_1 \in \cT \} \\
		S_j' &= \{j_m \dots j_1 \in \cT \text{ such that } j \neq j_m \text{ and } j j_m \dots j_1 \not \in \cT \}.
	\end{align*}
	Note that in the case $\cT = \Walk(G)$, then $S_j$ is the set of reverse walks such that the leftmost vertex $j_m \rightsquigarrow j$, and $S_j'$ is the set of reverse walks such that $j_m \neq j$ and $j_m$ is not $\rightsquigarrow j$.  Thus, the generalization of $\mathcal{H}_{\rightsquigarrow v}$ and $\mathcal{H}_{\perp v}$ are respectively
	\[
	\mathcal{H}_{S_j} = \bigoplus_{j_m \dots j_1 \in S_j} \mathcal{H}_{j_m}^\circ \otimes \dots \otimes \mathcal{H}_{j_1}^\circ \text{ and } \mathcal{H}_{S_j'} = \bigoplus_{j_m \dots j_1 \in S_j'} \mathcal{H}_{j_m}^\circ \otimes \dots \otimes \mathcal{H}_{j_1}^\circ.
	\]
	Then just as in \eqref{eq:UV}, we have a unitary isomorphism
	\[
	u_j: \mathcal{H} \xrightarrow{\cong} [\mathcal{H}_j \otimes \mathcal{H}_{S_j}] \oplus \mathcal{H}_{S_j'},
	\]
	and define the map $\iota_j: B(\mathcal{H}_j) \to B(\mathcal{H})$ by
	\[
	\iota_j(a) = u_j[[a \otimes \id_{\mathcal{H}_{S_j}}] \oplus 0_{\mathcal{H}_{S_j'}}]u_j^*.
	\]
	In the case $\cT = \operatorname{Walk}(G)$, this reduces to Definition \ref{def:inclusionmaps}.
	
	Next, we turn to the generalization of Theorem \ref{thm:momentformula}.  This will again express $\ip{\xi, \iota_{\ell(1)}(a_1) \dots \iota_{\ell(k)}(a_k) \xi}$, where $a_j \in B(\mathcal{H}_{\ell(j)})$ and $\ell(1)$, \dots, $\ell(k)$ is alternating, though a sum of Boolean cumulants indexed by partitions compatible with the given tree $\cT$.  Compatibility is described as follows.
	
	Given a labeling $\ell: [k] \to [n]$ and a compatible partition $\pi$ in the sense of Definition \ref{def: partition labeling compatible}, we say that $\pi$ and $\ell$ are \emph{compatible with $\cT$} if the following condition holds:  For each block $B \in \pi$, let $B_0 \rightsquigarrow B_1 \rightsquigarrow \dots \rightsquigarrow B_m = B$ be the unique path from a minimal block up/down to $B$ in the nesting forest $\mathrm{F}(\pi)$.  Then for every block $B$, we have $\ell(B_m) \dots \ell(B_0) \in \cT$.
	
	Another interpretation of this statement is as follows.  As in \cite[Definition 4.15]{JekelLiu2020}, we can make $\mathrm{F}(\pi)$ into an out-tree $\operatorname{graph}(\pi)$ by adding a new vertex $\varnothing$, which will be the root and have edges to all the minimal blocks in $\mathrm{F}(\pi)$.  This is analogous to the way that $\operatorname{Walk}(G)$ has the empty walk $\varnothing$ added as the root vertex.  Then compatibility of $\pi$, $\ell$, and $\cT$ means precisely that there is a digraph homomorphism $\phi: \operatorname{graph}(\pi) \to \cT$ preserving the root, such that for every block $B$, the first letter of $\phi(B)$ is $\ell(B)$; see also \cite[Remark 4.20]{JekelLiu2020}.  In the case where $\cT = \operatorname{Walk}(G)$, then after deleting the root vertex, we get a digraph homomorphism from $\mathrm{F}(\pi)$ to $\operatorname{Walk}(G) \setminus \{\varnothing\}$.  Now $\operatorname{Walk}(G) \setminus \{\varnothing\}$ is a union of $n$ branches, each branch representing the walks starting at a vertex $v \in [n]$; this construction is a version of the universal cover of a digraph where the walks are always directed.  Just like in the case of the universal cover for undirected graphs, homomorphisms from the $\operatorname{graph}(\pi)$ into $\operatorname{Walk}(G)$ correspond to homomorphisms $\mathrm{F}(\pi) \to G$.  Thus, homomorphisms from $\operatorname{graph}(\pi)$ into $\cT$ as in \cite[Remark 4.20]{JekelLiu2020} reduce in the case of $\cT = \operatorname{Walk}(G)$ to homomorphisms from $\mathrm{F}(\pi)$ into $G$ as in Definition \ref{def: partition labeling graph compatible}.
	
	\section{Convolution and limit theorems} \label{sec: limit theorem}
	
	\subsection{The compactly supported case} \label{subsec: limit theorem compact support}
	
	Given a digraph $G = (V,E)$ and compactly supported measures $(\mu_v)_{v \in V}$, we define the $G$-free convolution $\boxplus_G((\mu_v)_{v \in V})$ as follows.  Recall by the spectral theorem that any self-adjoint, or more generally normal, element in a $\mathrm{C}^*$-probability space has as well-defined spectral distribution with respec to the state (see e.g. \cite[Proposition 3.13]{NS2006}); moreover, any compactly supported $\mu \in \mathcal{P}(\R)$ can be realized as the spectral distribution of some self-adjoint element in a $\mathrm{C}^*$-probability space, for instance by taking $\cA = C(\operatorname{supp}(\mu))$, $\phi(f) = \int f\,d\mu$, and $x$ to be the identity function.
	
	Now fix $G$ and $(\mu_v)_{v \in V}$.  Let $(\mathcal{A}_v,\phi_v)$ be a $\mathrm{C}^*$-probability space and $x_v \in \mathcal{A}_v$ self-adjoint such that the spectral distribution of $x_v$ with respect to $\phi_v$ is $\mu_v$.  Let $(\mathcal{A},\phi)$ be the $G$-product of $(\mathcal{A}_v,\phi_v)_{v \in V}$ and let $\iota_v: \mathcal{A}_v \to \mathcal{A}$ the corresponding inclusion (see Definition \ref{def: G free product probability space}).  Then $\boxplus_G((\mu_v)_{v \in V})$ is defined to be the spectral distribution of $\sum_{v \in V} \iota_v(x_v)$ with respect to the state $\phi$.  For this to be well-defined, one should verify that the specific choice of $(\mathcal{A}_v,\phi_v)$ and $x_v$ does not affect the final result, so long as $x_v$ has the distribution $\mu_v$.  Since $\sum_{v \in V} \iota_v(x_v)$ is a bounded operator, its spectral distribution is uniquely determined by its moments.  Thus, it suffices to show that the moments of $\sum_{v \in V} \iota_v(x_v)$ are uniquely determined by the moments of $x_v$. This will follow from the next result, where we compute the moments of $x$ using Theorem \ref{thm:momentformula}.
	
	Given the role of Boolean cumulants in Theorem \ref{thm:momentformula}, we often use the Boolean cumulants of a single operator and of a probability measure, and hence we use the following notation.
	
	\begin{notation}~
		\begin{itemize}
			\item For a $\mathrm{C}^*$-probability space $(\cA,\phi)$ and $x \in \cA$, we write $\kappa_{\bool,k}(x) = K_{\bool,k}(x,\dots,x)$.        
			\item For $\mu \in \mathcal{P}(\R)$ compactly supported, we write $\kappa_{\bool,k}(\mu)$ for the $k$th Boolean cumulant of any self-adjoint $x$ in $(\cA,\phi)$ whose spectral distribution is $\mu$, which of course only depends on $\mu$. 
			\item Similarly, for a partition $\pi$ of $[k]$ and a compactly supported $\mu \in \mathcal{P}(\R)$, we write $\kappa_{\bool,\pi}(\mu) = \prod_{B \in \pi} \kappa_{\bool,|B|}(\mu)$.
			\item Moreover, for $\mu \in \mathcal{P}(\R)$ compactly supported, we denote by $m_k(\mu)$ the $k$th moment of $\mu$.
		\end{itemize}
	\end{notation}
	
	\begin{lemma} \label{lem: moments of sum}
		Let $G = (V,E)$ be a digraph.  Let $(\mathcal{A},\phi)$ be the $G$-product of $\mathrm{C}^*$-probability spaces $(\mathcal{A}_v,\phi_v)$.  Let $x_v \in \mathcal{A}_v$ be self-adjoint.  Let $x = \sum_{v \in V} \iota_v(x_v)$.  Then for $k \in \N$,
		\[
		\phi(x^k) = \sum_{\ell: [k] \to V}  \sum_{\pi \in \mathcal{NC}_k(\ell,G)} \prod_{B \in \pi} \kappa_{\bool,|B|}(x_{\ell(B)}).
		\]
		Here $\ell$ is required to be constant on each block $B$, and so $\ell(B)$ denotes the constant value on that block.
	\end{lemma}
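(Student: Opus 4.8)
The plan is to reduce everything to Theorem \ref{thm:momentformula} via a multilinear expansion of $x^k$. First I would realize the abstract $G$-free product concretely: pass to the GNS representations $\mathcal{A}_v \subseteq B(\mathcal{H}_v)$ with cyclic vectors $\xi_v$ so that $\phi_v(\cdot) = \ip{\xi_v, \cdot\, \xi_v}$, form $(\mathcal{H},\xi) = \assemb_G[(\mathcal{H}_v,\xi_v)_{v \in V}]$ as in Definition \ref{def:productHilbertspace}, and identify $\iota_v$ with the maps of Definition \ref{def:inclusionmaps}, so that $\phi = \ip{\xi,\cdot\,\xi}$. This puts us exactly in the hypotheses of Theorem \ref{thm:momentformula}.

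Next, since $x = \sum_{v \in V} \iota_v(x_v)$ and multiplication in $\mathcal{A}$ is multilinear, expanding the product gives
\[
x^k = \sum_{\ell\colon [k] \to V} \iota_{\ell(1)}(x_{\ell(1)}) \cdots \iota_{\ell(k)}(x_{\ell(k)}),
\]
the sum running over all functions $\ell\colon [k] \to V$. Applying $\phi$, using linearity, and invoking Theorem \ref{thm:momentformula} on each summand (with $a_j = x_{\ell(j)}$) yields
\[
\phi(x^k) = \sum_{\ell\colon [k] \to V}\ \sum_{\pi \in \mathcal{NC}_k(\ell,G)} \prod_{B \in \pi} K_{\bool,|B|}[x_{\ell(j)}\colon j \in B].
\]

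The final step is to simplify the boolean cumulants. Whenever $\pi \in \mathcal{NC}_k(\ell,G)$, the labelling $\ell$ is by definition constant on each block $B$ of $\pi$ (this is already built into $\mathcal{NC}_k(\ell) \supseteq \mathcal{NC}_k(\ell,G)$), so for $j \in B$ every argument $x_{\ell(j)}$ equals $x_{\ell(B)}$, and hence $K_{\bool,|B|}[x_{\ell(j)}\colon j \in B] = K_{\bool,|B|}(x_{\ell(B)},\dots,x_{\ell(B)}) = \kappa_{\bool,|B|}(x_{\ell(B)})$. Substituting this identity into the displayed double sum gives precisely the asserted formula; the parenthetical remark in the statement that ``$\ell$ is required to be constant on each block'' is automatic once $\pi$ and $\ell$ are compatible, so no terms are lost by phrasing it this way.

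I do not anticipate a genuine obstacle: the combinatorial content lies entirely in Theorem \ref{thm:momentformula}, and the remainder is the multilinear expansion together with the observation about repeated-argument boolean cumulants. The one point deserving a sentence of care is the passage to the GNS picture, which is what allows Theorem \ref{thm:momentformula} — stated for bounded operators on pointed Hilbert spaces — to be applied to the abstract $C^*$-algebraic $G$-free product $(\mathcal{A},\phi)$.
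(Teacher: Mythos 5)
Your proposal is correct and follows essentially the same route as the paper's proof: expand $x^k$ by multilinearity over labellings $\ell\colon [k]\to V$, apply Theorem \ref{thm:momentformula} to each term, and use that $\ell$ is constant on blocks to rewrite $K_{\bool,|B|}[x_{\ell(j)}\colon j\in B]$ as $\kappa_{\bool,|B|}(x_{\ell(B)})$. Your explicit remark about passing to the GNS/Hilbert-space realization so that Theorem \ref{thm:momentformula} applies is a reasonable extra sentence of care that the paper leaves implicit in its definition of the $G$-free product.
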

	
	\begin{proof}
		Using multilinearity,
		\[
		\phi(x^k) = \phi \left( \left( \sum_{v \in V} \iota_v(x_v) \right)^k \right) = \sum_{\ell: [k] \to V} \phi(\iota_{\ell(1)}(x_{\ell(1)}) \dots \iota_{\ell(k)}(x_{\ell(k)})).
		\]
		By Theorem \ref{thm:momentformula}, this equals 
		\[
		\sum_{\ell: [k] \to V}  \sum_{\pi \in \mathcal{NC}_k(\ell,G))} \prod_{B \in \pi} K_{\bool,|B|}[x_{\ell(j)}: j \in B].
		\]
		Now $\ell$ must be constant on each block $B$ in the above expression and hence we can write $K_{\bool,|B|}[x_{\ell(j)}: j \in B]$ equivalently as the $|B|$th cumulant of $x_{\ell(B)}$.
	\end{proof}
	
	For our limit theorems, we focus on repeated convolutions of same measure.  For simplicity of notation, we denote by $\boxplus_G(\mu)$ the $G$-free convolution of $(\mu_v)_{v \in V}$ where all the $\mu_v$'s are equal to $\mu$.  The previous lemma implies the following.
	
	\begin{lemma} \label{lem: moments of convolution power}
		Let $\mu \in \mathcal{P}(\R)$ be compactly supported.  Let $G = (V,E)$ be a finite digraph.  Then we have
		\[
		m_k(\boxplus_G(\mu)) = \sum_{\pi \in \mathcal{NC}_k} |\Hom(\mathrm{F}(\pi),G)| \kappa_{\bool,\pi}(\mu).
		\]
	\end{lemma}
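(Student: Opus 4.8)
The plan is to apply Lemma \ref{lem: moments of sum} with all of the input measures equal to $\mu$ and then simply reorganize the double sum. Concretely, I would choose for each $v \in V$ a non-commutative probability space $(\mathcal{A}_v,\phi_v)$ and a self-adjoint $x_v \in \mathcal{A}_v$ whose spectral distribution is $\mu$, form the $G$-free product $(\mathcal{A},\phi)$ with inclusions $\iota_v \colon \mathcal{A}_v \to \mathcal{A}$, and set $x = \sum_{v \in V} \iota_v(x_v)$, which is a bounded self-adjoint operator. By definition $\boxplus_G(\mu)$ is the spectral distribution of $x$, and since $\mu$ is compactly supported this distribution is determined by its moments; hence $m_k(\boxplus_G(\mu)) = \phi(x^k)$. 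Lemma \ref{lem: moments of sum} then yields
\[
m_k(\boxplus_G(\mu)) = \sum_{\ell \colon [k] \to V} \ \sum_{\pi \in \mathcal{NC}_k(\ell,G)} \ \prod_{B \in \pi} \kappa_{\bool,|B|}(x_{\ell(B)}).
\]

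The next observation is that each $x_v$ has the same spectral distribution $\mu$, so its boolean cumulants do not depend on $v$: $\kappa_{\bool,|B|}(x_{\ell(B)}) = \kappa_{\bool,|B|}(\mu)$ for every block $B$, independently of the labelling. Consequently the weight $\prod_{B \in \pi} \kappa_{\bool,|B|}(x_{\ell(B)})$ equals $\prod_{B \in \pi} \kappa_{\bool,|B|}(\mu)$ and depends on $\pi$ alone. This lets me interchange the order of summation and factor the cumulant product out of the sum over $\ell$, obtaining
\[
m_k(\boxplus_G(\mu)) = \sum_{\pi \in \mathcal{NC}_k} \left( \# \{ \ell \colon [k] \to V : \pi \in \mathcal{NC}_k(\ell,G) \} \right) \prod_{B \in \pi} \kappa_{\bool,|B|}(\mu).
\]

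It then remains to identify the counting factor with $|\Hom(\mathrm{F}(\pi),G)|$. By Definition \ref{def: partition labelling compatible}, the labellings $\ell$ with $\pi \in \mathcal{NC}_k(\ell)$ are exactly those constant on each block of $\pi$, and such $\ell$ correspond bijectively to maps $\tilde{\ell} \colon \pi \to V$; that is, to maps from the vertex set of $\mathrm{F}(\pi)$ to $V$. By Definition \ref{def: partition labelling graph compatible}, the extra condition $\pi \in \mathcal{NC}_k(\ell,G)$ says precisely that $\tilde{\ell}$ is a digraph homomorphism $\mathrm{F}(\pi) \to G$. Hence the number of such $\ell$ is $|\Hom(\mathrm{F}(\pi),G)|$, and substituting this into the previous display gives the claimed formula.

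I do not expect any real obstacle: the argument is a reindexing of the sum from Lemma \ref{lem: moments of sum} together with the $v$-independence of boolean cumulants of a fixed measure. The only point requiring care is the bookkeeping identification between block-constant labellings compatible with $G$ and digraph homomorphisms out of the nesting forest, but this is exactly how $\mathcal{NC}_k(\ell,G)$ was defined, so it is immediate.
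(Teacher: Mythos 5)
Your proposal is correct and follows essentially the same route as the paper: apply Lemma \ref{lem: moments of sum} with all inputs equal to $\mu$, use that the boolean cumulants then depend only on the block sizes, interchange the sums over $\ell$ and $\pi$, and identify the count of block-constant labellings compatible with $G$ with $|\Hom(\mathrm{F}(\pi),G)|$ via Definition \ref{def: partition labelling graph compatible}. No gaps to note.
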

	
	\begin{proof}
		For each $v \in V$, let $(\mathcal{A}_v,\phi_v)$ be a $\mathrm{C}^*$-probability space and $x_v \in \mathcal{A}_v$ self-adjoint with distribution $\mu$; of course, one may take $\mathcal{A}_v = L^\infty(\R,\mu)$ with the state given by integration for every $v$, but for the purposes of notation, we want to distinguish the spaces for various $v$.  Let $x = \sum_{v \in V} \iota_v(x_v)$ in the $G$-product space as in the previous lemma. By the previous lemma,
		\begin{align*}
			m_k(\boxplus_{G}(\mu)) = \phi(x^k) &= \sum_{\ell: [k] \to V}  \sum_{\pi \in \mathcal{NC}_k(\ell,G))} \prod_{B \in \pi} \kappa_{\bool,|B|}(x_{\ell(B)}) \\
			&= \sum_{\ell: [k] \to V}  \sum_{\pi \in \mathcal{NC}_k(\ell,G))} \prod_{B \in \pi} \kappa_{\bool,|B|}(\mu). 
		\end{align*}
		Now we exchange the order of summation over $\ell$ and $\pi$ to get
		\begin{align*}
			m_k(\boxplus_{G}(\mu)) &= \sum_{\pi \in \mathcal{NC}_k} \sum_{\substack{ \ell: [k] \to V \\ \pi \in \mathcal{NC}_k(\ell,G)}} \prod_{B \in \pi} \kappa_{\bool,|B|}(\mu) \\
			&= \sum_{\pi \in \mathcal{NC}_k} |\{ \ell: [k] \to V, \pi \in \mathcal{NC}_k(\ell,G)\}| \, \prod_{B \in \pi} \kappa_{\bool,|B|}(\mu).
		\end{align*}
		Recall that $\pi \in \mathcal{NC}_k(\ell,G)$ if and only if $\ell$ defines a digraph homomorphism $\mathrm{F}(\pi) \to G$.  Therefore,
		\[
		|\{ \ell: [k] \to V | \\ \pi \in \mathcal{NC}_k(\ell,G)\}| = |\Hom(\mathrm{F}(\pi),G)|,
		\]
		which establishes the desired formula.
	\end{proof}
	
	The next lemma is the first step of Theorem \ref{thm:limit}.  In fact, it is a special case of the theorem when $\mu_n = \mu^{\uplus 1/|V_n|}$ and $\mu$ is compactly supported.
	
	\begin{lemma} \label{lem:cpctsuppconvergence}
		Suppose that $G_n = (V_n,E_n)$ is a sequence of digraphs such that for every finite out-tree $G' = (V',E')$,
		\[
		\lim_{n \to \infty} \frac{1}{|V_n|^{|V'|}} |\Hom(G',G_n)| = \beta_{G'}.
		\]
		For an out-forest $G'$ that is the disjoint union of out-forests $G_1', \ldots, G_k'$, let us write
		\[
		\beta_{G'} = \beta_{G_1'} \dots \beta_{G_k'}.
		\]
		Then for every compactly supported measure $\mu$, we have
		\begin{equation} \label{eq: limit moments}
			\lim_{n \to \infty} m_k(\boxplus_{G_n}(\mu^{\uplus 1/|V_n|})) = \sum_{\pi \in \mathcal{NC}_k} \beta_{\mathrm{F}(\pi)} \kappa_{\bool,\pi}(\mu).
		\end{equation}
		Moreover, denoting by $\rad(\mu)$ the radius of the support of the measure $\mu$, we have $\rad(\boxplus_{G_n}(\mu^{\uplus 1/|V_n|})) \leq 4 \rad(\mu)$.  Hence, $\lim_{n \to \infty} \boxplus_{G_n}(\mu^{\uplus 1 / |V_n|})$ exists in $\mathcal{P}(\R)$.
	\end{lemma}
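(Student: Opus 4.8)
The plan is to feed the measures $\mu^{\uplus 1/|V_n|}$ into the explicit moment formula of Lemma~\ref{lem: moments of convolution power}, use the linear scaling of boolean cumulants under the $t$-transformation, pass to the limit termwise over the (finite) set $\mathcal{NC}_k$, and finally promote convergence of moments to weak-$*$ convergence via a uniform bound on the radius of support. Write $N = |V_n|$. A preliminary point: $\mu^{\uplus 1/N}$ is compactly supported, so that $\boxplus_{G_n}(\mu^{\uplus 1/N})$ is defined. Indeed, realizing $\mu$ by a bounded self-adjoint $x$ on a pointed Hilbert space $(\mathcal{H},\xi)$ with $P$ the rank-one projection onto $\C\xi$ and $Q = 1-P$, the self-adjoint operator
\[
y = \tfrac1N\, PxP + \tfrac1{\sqrt N}\,(PxQ + QxP) + QxQ
\]
satisfies $\norm{y} \le 3\,\rad(\mu)$ and, by a direct computation with Lemma~\ref{lem: boolean cumulants}, $\kappa_{\bool,m}(y) = \tfrac1N\,\kappa_{\bool,m}(\mu)$ for every $m$; since the $t$-transformation scales every boolean cumulant by $t$, $y$ realizes $\mu^{\uplus 1/N}$. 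Now Lemma~\ref{lem: moments of convolution power} applied to $\mu^{\uplus 1/N}$, together with $\kappa_{\bool,|B|}(\mu^{\uplus 1/N}) = \tfrac1N\kappa_{\bool,|B|}(\mu)$, gives
\[
m_k\bigl(\boxplus_{G_n}(\mu^{\uplus 1/N})\bigr) = \sum_{\pi \in \mathcal{NC}_k} \frac{|\Hom(\mathrm{F}(\pi),G_n)|}{N^{|\pi|}}\,\kappa_{\bool,\pi}(\mu),
\]
where $|\pi|$ is the number of blocks of $\pi$, which is precisely the number of vertices of $\mathrm{F}(\pi)$, and $\kappa_{\bool,\pi}(\mu) = \prod_{B \in \pi} \kappa_{\bool,|B|}(\mu)$.

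For the limit, fix $\pi \in \mathcal{NC}_k$ and decompose the rooted forest $\mathrm{F}(\pi)$ into its rooted-tree components $T_1, \dots, T_r$. A homomorphism out of a disjoint union of digraphs is exactly a tuple of homomorphisms out of the components, so $|\Hom(\mathrm{F}(\pi),G_n)| = \prod_{i=1}^r |\Hom(T_i,G_n)|$, while $|\pi| = \sum_{i=1}^r |V(T_i)|$; hence
\[
\frac{|\Hom(\mathrm{F}(\pi),G_n)|}{N^{|\pi|}} = \prod_{i=1}^r \frac{|\Hom(T_i,G_n)|}{N^{|V(T_i)|}} \longrightarrow \prod_{i=1}^r \beta_{T_i} = \beta_{\mathrm{F}(\pi)}
\]
as $n \to \infty$, by the hypothesis on trees and the stated convention for forests. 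Since $\mathcal{NC}_k$ is finite, summing these limits over $\pi$ yields \eqref{eq: limit moments}.

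For the radius bound, note that there are at most $N^{|V(\mathrm{F}(\pi))|} = N^{|\pi|}$ maps from the vertex set of $\mathrm{F}(\pi)$ into $V_n$, so every coefficient $|\Hom(\mathrm{F}(\pi),G_n)|/N^{|\pi|}$ lies in $[0,1]$; and by Lemma~\ref{lem: boolean cumulants}, $|\kappa_{\bool,m}(\mu)| = |\ip{\xi, xQxQ\cdots Qx\xi}| \le \rad(\mu)^m$. Therefore
\[
\bigl|m_k\bigl(\boxplus_{G_n}(\mu^{\uplus 1/N})\bigr)\bigr| \le \sum_{\pi \in \mathcal{NC}_k} \prod_{B \in \pi} \rad(\mu)^{|B|} = |\mathcal{NC}_k|\,\rad(\mu)^k \le \bigl(4\,\rad(\mu)\bigr)^k,
\]
using the Catalan bound $|\mathcal{NC}_k| = \tfrac{1}{k+1}\binom{2k}{k} \le 4^k$. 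As $\boxplus_{G_n}(\mu^{\uplus 1/N})$ is the distribution of a bounded operator it is compactly supported, and the even-order estimate $m_{2j}\bigl(\boxplus_{G_n}(\mu^{\uplus 1/N})\bigr) \le (4\,\rad(\mu))^{2j}$ for all $j$ forces $\rad\bigl(\boxplus_{G_n}(\mu^{\uplus 1/N})\bigr) \le 4\,\rad(\mu)$. Finally, all the measures $\nu_n := \boxplus_{G_n}(\mu^{\uplus 1/|V_n|})$ are supported in the fixed compact interval $[-4\,\rad(\mu),\,4\,\rad(\mu)]$; by weak-$*$ compactness, every subsequence of $(\nu_n)$ has a further subsequence converging weakly-$*$ to some $\widehat\mu$, and moments are weak-$*$ continuous on measures with common compact support, so $m_k(\widehat\mu) = \lim_n m_k(\nu_n) = \sum_{\pi \in \mathcal{NC}_k}\beta_{\mathrm{F}(\pi)}\kappa_{\bool,\pi}(\mu)$ for every $k$, independently of the subsequence. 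Since a compactly supported measure is determined by its moments, all subsequential limits coincide, so $\nu_n$ converges weakly-$*$; this is the final assertion.

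The step demanding the most care is the uniform radius bound: it is what upgrades convergence of moments to genuine weak-$*$ convergence, and it rests on the crude but essential estimate $|\Hom(\mathrm{F}(\pi),G_n)| \le |V_n|^{|\pi|}$ combined with the $O(4^k)$ growth of $|\mathcal{NC}_k|$. The other point requiring attention is the forest/tree bookkeeping — matching $\beta_{\mathrm{F}(\pi)}$ with the product over the rooted-tree components of $\mathrm{F}(\pi)$, and $|\pi|$ with $|V(\mathrm{F}(\pi))|$ — together with the preliminary verification that $\mu^{\uplus 1/N}$ is compactly supported so that the whole construction is legitimate.
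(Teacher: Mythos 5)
Your proof is correct and follows essentially the same route as the paper's: the moment formula of Lemma~\ref{lem: moments of convolution power} plus the scaling $\kappa_{\bool,|B|}(\mu^{\uplus 1/|V_n|}) = \kappa_{\bool,|B|}(\mu)/|V_n|$, termwise limits over the finite set $\mathcal{NC}_k$ using multiplicativity of homomorphism counts over forest components, the bounds $|\Hom(\mathrm{F}(\pi),G_n)| \le |V_n|^{|\pi|}$, $|\kappa_{\bool,m}(\mu)| \le \rad(\mu)^m$, and $|\mathcal{NC}_k| \le 4^k$ for the uniform support radius, and then the standard upgrade from moment convergence to weak-$*$ convergence. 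Your explicit operator model verifying that $\mu^{\uplus 1/N}$ is compactly supported is a small additional check the paper leaves implicit, not a different method.
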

	
	\begin{proof}
		First, note that $\mu^{\uplus 1/|V_n|}$ is a compactly supported probability measure.  Indeed, from \cite[equation (5)]{MBJW1998}, we see that its Cauchy transform is analytic in a neighborhood of $\infty$, which is equivalent to the measure having compact support.  Hence also $\boxplus_{G_n}(\mu^{\uplus 1/|V_n|})$ is a well-defined compactly supported probability measure.
		
		Now note that if $G'$ is an out-forest which is a disjoint union of out-trees $G_1'$, \dots, $G_k'$, then a digraph homomorphism from $G' \to G_n$ is equivalent to a $k$-tuple of digraph homomorphisms $G_i' \to G_n$ for $i = 1$, \dots, $k$, and thus
		\[
		|\Hom(G',G_n)| = |\Hom(G_1',G_n)| \dots |\Hom(G_k',G_n)|.
		\]
		Moreover, since $|V'| = |V_1'| + \dots + |V_k'|$, we have
		\[
		\frac{|\Hom(G',G_n)|}{|V_n|^{|V'|}} = \frac{|\Hom(G_1',G_n)|}{|V_n|^{|V_1'|}} \dots \frac{|\Hom(G_k',G_n)|}{|V_n|^{|V_k'|}}.
		\]
		Hence,
		\[
		\lim_{n \to \infty} \frac{|\Hom(G',G_n)|}{|V_n|^{|V'|}} = \beta_{G_1'} \dots \beta_{G_k'} = \beta_G.
		\]
		In other words, the hypothesis that we assumed to be true when $G'$ is an out-tree extends automatically to the case when $G'$ is an out-forest.
		
		By the previous lemma,
		\[
		m_k(\boxplus_{G_n}(\mu^{\uplus 1/|V_n|})) = \sum_{\pi \in \mathcal{NC}_k} |\Hom(\mathrm{F}(\pi),G_n)| \prod_{B \in \pi} \kappa_{\bool,|B|}(\mu^{\uplus 1/|V_n|}).
		\]
		By definition of the Boolean convolution powers, $\kappa_{\bool,|B|}(\mu^{\uplus 1/|V_n|}) = (1/|V_n|) \kappa_{\bool,|B|}(\mu)$.  Thus,
		\begin{align*}
			m_k(\boxplus_{G_n}(\mu^{\uplus 1/|V_n|})) &= \sum_{\pi \in \mathcal{NC}_k} |\Hom(\mathrm{F}(\pi),G_n)| \prod_{B \in \pi} \frac{1}{|V_n|} \kappa_{\bool,|B|}(\mu) \\
			&= \sum_{\pi \in \mathcal{NC}_k} \frac{|\Hom(\mathrm{F}(\pi),G_n)|}{|V_n|^{|\pi|}} \prod_{B \in \pi} \kappa_{\bool,|B|}(\mu).
		\end{align*}
		Here $|\pi|$ is the number of blocks in $\pi$, which is the same as the number of vertices in $\mathrm{F}(\pi)$.  By the foregoing argument, for each $\pi$, we have
		\[
		\lim_{n \to \infty} \frac{|\Hom(\mathrm{F}(\pi),G_n)|}{|V_n|^{|\pi|}} = \beta_{\mathrm{F}(\pi)}.
		\]
		Therefore, \eqref{eq: limit moments} holds.
		
		Finally, we prove our estimate on the support radius of $\boxplus_{G_n}(\mu^{\uplus 1/|V_n|})$.  First, by Lemma \ref{lem: Boolean cumulants}, one can see that
		\[
		|\kappa_{\bool,k}(\mu)| = |K_{\bool,k}(x_v,\dots,x_v)| \leq \norm{x_v}^k = \rad(\mu)^k,
		\]
		where $x_v$ is the operator of multiplication by $x$ in $\mathcal{A}_v = L^\infty(\R,\mu)$.  Thus, we estimate
		\begin{align*}
			|m_k(\boxplus_{G_n}(\mu^{\uplus 1/|V_n|}))| &\leq \sum_{\pi \in \mathcal{NC}_k} \frac{|\Hom(\mathrm{F}(\pi),G_n)|}{|V_n|^{|\pi|}} \prod_{B \in \pi} |\kappa_{\bool,|B|}(\mu)| \\
			&\leq \sum_{\pi \in \mathcal{NC}_k} \frac{|\Hom(\mathrm{F}(\pi),G_n)|}{|V_n|^{|\pi|}} \prod_{B \in \pi} \rad(\mu)^{|B|} \\
			&= \sum_{\pi \in \mathcal{NC}_k} \frac{|\Hom(\mathrm{F}(\pi),G_n)|}{|V_n|^{|\pi|}} \rad(\mu)^k.
		\end{align*}
		Since homomorphisms $\mathrm{F}(\pi) \to G_n$ are functions from $\pi$ to $V_n$, we have
		\[
		\frac{|\Hom(\mathrm{F}(\pi),G_n)|}{|V_n|^{|\pi|}} \leq 1.
		\]
		Moreover, the number of non-crossing partitions of $[k]$ is the $k$th Catalan number $C_k$, which satisfies $C_k \leq 4^k$.  Hence, overall $|m_k(\boxplus_{G_n}(\mu^{\uplus 1/|V_n|}))| \leq 4^k \rad(\mu)^k$.  Because this holds for all $k$, we conclude that $\rad(\boxplus_{G_n}(\mu^{\uplus 1/|V_n|})) \leq 4 \rad(\mu)$.
		
		Thus, the support radius of $\boxplus_{G_n}(\mu^{\uplus 1/|V_n|})$ is uniformly bounded for all $n$, and hence convergence in moments for this sequence is equivalent to convergence in $\mathcal{P}(\R)$.  This concludes the final claim of the lemma.
	\end{proof}
	
	\subsection{The general case} \label{subsec: limit general}
	
	In order to define the $G$-free convolution for probability measures $(\mu_v)_{v \in V}$ that do not necessarily have bounded support, we first want to express the convolution operation using complex-analytic transforms.  For a probability measure $\mu$ on $\R$, write its Cauchy-Stieltjes transform 
	\[
	G_\mu(z) = \int_{\R} \frac{1}{z - t} \,d\mu(t) \text{ for } z \in \C \setminus \R,
	\]
	and define 
	\[
	K_\mu(z) = z - 1/G_\mu(z).
	\]
	If $\mu$ is compactly supported, then the $K$-transform $K_\mu$ is related to Boolean cumulants $\kappa_{\bool,k}(\mu)$ by the formula
	\[
	K_\mu(z) = \sum_{k=1}^\infty z^{-(k-1)} \kappa_{\bool,k}(\mu);
	\]
	see \cite[end of \S 2]{SpW1997}.  In the case $G$-independence, we have the following result.
	
	\begin{proposition}[{\cite[Proposition 6.9]{JekelLiu2020}}] \label{prop: K transform equation}
		Let $G$ be a digraph on the vertex set $[n]$.  Let $\mu_1$, \dots, $\mu_n$ be compactly supported measures.  For each vertex $j$, let $\operatorname{Walk}(G,j)$ be the tree whose vertices are the empty walk and all reverse walks that start at $j$.  Let
		\[
		\nu_j = \boxplus_{\operatorname{Walk}(G,j)}(\mu_1,\dots,\mu_n)
		\]
		be the convolution of $\mu_1$, \dots, $\mu_n$ with respect to the tree $\operatorname{Walk}(G,j)$ as in \cite{JekelLiu2020}.  Then $\nu_1,\dots,\nu_n$ satisfy the equations
		\begin{equation} \label{eq: fixed point}
			K_{\nu_i}(z) = K_{\mu_i}\left( z - \sum_{j \leftsquigarrow i} K_{\nu_j}(z) \right),
		\end{equation}
		and we have
		\[
		K_{\boxplus_G(\mu_1,\dots,\mu_n)}(z) = \sum_{i=1}^n K_{\nu_i}(z).
		\]
	\end{proposition}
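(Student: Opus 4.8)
Since the $\mu_i$ have compact support, every transform that occurs is determined by boolean cumulants, so it suffices to prove the two asserted identities as identities of formal power series. The plan is to read both off from the self-similar structure of the rooted trees $\operatorname{Walk}(G)$ and $\operatorname{Walk}(G,j)$, using the moment formula of Theorem \ref{thm:momentformula} and its analogue for tree independence (see \S\ref{sec: independence} and \cite{JekelLiu2020}).

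The structural observation is that $\operatorname{Walk}(G)$ is obtained by gluing $\operatorname{Walk}(G,1),\dots,\operatorname{Walk}(G,n)$ along their common vacuum $\varnothing$ --- concretely, the subtree of $\operatorname{Walk}(G)$ rooted at the child $j$ of $\varnothing$, together with $\varnothing$ itself, is exactly $\operatorname{Walk}(G,j)$ --- and that, inside $\operatorname{Walk}(G,i)$, the vacuum $\varnothing$ has the single child $i$ (carrying the variable $x_i$), below which one finds the trees $\operatorname{Walk}(G,j)$ for $j\leftsquigarrow i$, glued together with the vertex $i$ now playing the role of their common vacuum. Now gluing rooted trees along their common vacuum is precisely boolean convolution of the corresponding convolutions; this immediately gives $\boxplus_G(\mu_1,\dots,\mu_n)=\nu_1\uplus\cdots\uplus\nu_n$, hence $K_{\boxplus_G(\mu_1,\dots,\mu_n)}=\sum_{i=1}^n K_{\nu_i}$ by additivity of $K$ under $\uplus$, which is immediate from Lemma \ref{lem: boolean cumulants}. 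The same principle applied above the vertex $i$ shows that the part of $\operatorname{Walk}(G,i)$ strictly above $i$ has convolution $\sigma_i:=\biguplus_{j\leftsquigarrow i}\nu_j$, with $K$-transform $\sum_{j\leftsquigarrow i}K_{\nu_j}$. Thus the only thing left to prove is the subordination identity: if $\nu$ is the distribution of a variable $x\sim\mu$ placed at the bottom of a rooted tree with, above it, further structure of convolution $\sigma$, then $K_{\nu}(z)=K_{\mu}\bigl(z-K_{\sigma}(z)\bigr)$. Applying this with $\mu=\mu_i$, $\sigma=\sigma_i$ yields $K_{\nu_i}(z)=K_{\mu_i}\bigl(z-\sum_{j\leftsquigarrow i}K_{\nu_j}(z)\bigr)$, which is \eqref{eq: fixed point}.

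This last subordination identity is the crux, and the step I expect to require the most care. On the $\operatorname{Walk}(G,i)$-Fock space one writes the operator realizing $\nu_i$ as $\iota_i(x_i)+y_i$, where $y_i$ gathers the contributions of all vertices strictly above $i$. From the description of the inclusion maps (Observation \ref{obs: where they map} and Lemma \ref{lem: evaluation}) one checks that $y_i$ annihilates $\xi$ and acts nontrivially only once $\iota_i(x_i)$ has carried the vacuum into $\mathcal{H}_{\mu_i}^\circ$ --- that is, $\iota_i(x_i)$ sits strictly beneath $y_i$ in the monotone sense, while the several subtrees above $i$, being glued at a common vacuum, contribute additively. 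One then extracts the Cauchy transform $G_{\nu_i}(z)=\ip{\xi,(z-\iota_i(x_i)-y_i)^{-1}\xi}$ by resolvent expansion: the ``beneath'' structure produces a monotone-type composition while the boolean gluing above produces the shift by $K_{\sigma_i}=\sum_{j\leftsquigarrow i}K_{\nu_j}$, and unwinding $K=\mathrm{id}-H$, $H=1/G_{(\cdot)}$ gives $K_{\nu_i}(z)=K_{\mu_i}(z-K_{\sigma_i}(z))$. The delicate points are exactly the verification that $y_i$ is supported entirely on top of $\iota_i(x_i)$ and that the subtrees above $i$ contribute their $K$-transforms additively rather than by composition; both are combinatorial consequences of the structure of the $\operatorname{Walk}(G)$-Fock space developed in \S\ref{sec: independence}. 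We refer to \cite[\S6]{JekelLiu2020} for the complete argument.
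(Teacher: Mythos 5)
You should note first that the paper contains no internal proof of this proposition: it is imported verbatim from \cite[Proposition 6.9]{JekelLiu2020}, so there is no argument of the authors' to compare yours against, and your own sketch likewise defers the crux to \cite[\S 6]{JekelLiu2020}. With that caveat, your structural reductions are correct and are the ones actually used around this result: $\operatorname{Walk}(G)$ is the gluing of $\operatorname{Walk}(G,1),\dots,\operatorname{Walk}(G,n)$ at the vacuum, gluing rooted trees at a common vacuum corresponds to boolean convolution (the present paper uses exactly this in the proof of Proposition \ref{prop: multiregular 2}), which gives $\boxplus_G(\mu_1,\dots,\mu_n)=\nu_1\uplus\dots\uplus\nu_n$ and hence $K_{\boxplus_G}=\sum_i K_{\nu_i}$; and the self-similarity of $\operatorname{Walk}(G,i)$ (single child $i$ of the vacuum, with the trees $\operatorname{Walk}(G,j)$, $j\leftsquigarrow i$, glued above it) correctly reduces \eqref{eq: fixed point} to the subordination identity $K_{\nu_i}(z)=K_{\mu_i}\bigl(z-K_{\sigma_i}(z)\bigr)$ with $\sigma_i=\biguplus_{j\leftsquigarrow i}\nu_j$, which you state in the right form (not the naive monotone composition $F_{\mu_i}\circ F_{\sigma_i}$, which would be wrong here since the upper variables kill the vacuum).

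The one step that is genuinely imprecise is your operator decomposition of the crux. Writing the operator realizing $\nu_i$ as $\iota_i(x_i)+y_i$ with $y_i=\sum_{j\neq i}\iota_j(x_j)$ and asserting that ``$\iota_i(x_i)$ sits strictly beneath $y_i$'' fails whenever $G$ has a directed cycle through $i$: the letter $i$ then reappears at higher levels of $\operatorname{Walk}(G,i)$, so the single operator $\iota_i(x_i)$ acts both at the bottom vertex and strictly above it, and $y_i$ does not collect all of the upper contributions. The clean formulation is a decomposition of the Fock space rather than of the operator: the non-vacuum part of the $\operatorname{Walk}(G,i)$-Fock space is $\mathcal{F}_i\otimes\mathcal{H}_i^\circ$, where $\mathcal{F}_i$ is the Fock space of the trees $\operatorname{Walk}(G,j)$, $j\leftsquigarrow i$, glued at their vacuum; under this identification the upper component of the \emph{whole} sum $\sum_v\iota_v(x_v)$ (including the upper part of $\iota_i(x_i)$) acts as the glued-tree operator tensored with $\id_{\mathcal{H}_i^\circ}$, whose vacuum distribution is $\sigma_i$, and the first-return resolvent computation then yields $K_{\nu_i}(z)=K_{\mu_i}(z-K_{\sigma_i}(z))$. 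This is essentially what the cited argument in \cite[\S 6]{JekelLiu2020} carries out; with that correction your outline is sound, but as written it is an annotated citation rather than a self-contained proof, which is acceptable only because the paper itself treats the statement as quoted.
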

	
	The system \eqref{eq: fixed point} is a fixed-point equation for $(K_{\nu_1}(z),\dots,K_{\nu_n}(z))$, which suggests a way to extend the definition of $\boxplus_G(\mu_1,\dots,\mu_n)$ to general probability measures $\mu_1$, \dots, $\mu_n$.  It suffices to show that the solution to the fixed point equation exists, is unique, and depends continuously on the input measure.  This was done in \cite[Theorem 4.1]{JDW2021} in the more general setting of $\mathcal{T}$-free independence and convolution, using the Earle-Hamilton theorem.  This argument implies in particular that there is a unique $(\nu_1,\dots,\nu_n)$ satisfying \eqref{eq: fixed point}.  Therefore, the following definition is consistent with the compactly supported case.
	
	\begin{definition}[$G$-free convolution for general probability measures] \label{def: convolution general}
		Let $G$ be a digraph on vertex set $[n]$.  For $\mu_1$, \dots, $\mu_n \in \mathcal{P}(\R)$, define $\boxplus_G(\mu_1,\dots,\mu_n)$ to be $\nu_1 \uplus \dots \uplus \nu_n$, where $(\nu_1,\dots,\nu_n)$ are the unique probability measures satisfying \eqref{eq: fixed point}.  Moreover, in the case when the measures $\mu_j$ are the same, write $\boxplus_G(\mu) = \boxplus_G(\mu,\dots,\mu)$.
	\end{definition}
	
	Continuous dependence of the measures $\nu_1$, \dots, $\nu_n$ and consequently $\boxplus_G(\mu_1,\dots,\mu_n)$ upon the inputs $\mu_1$, \dots, $\mu_n$ also follows from \cite[Theorem 4.1]{JDW2021}.  In fact, there is a stronger equicontinuity result \cite[Theorem 6.2]{JDW2021} that we will need for the proof of Theorem \ref{thm:limit} in the case of measures with unbounded support.  Here we will use the \emph{L{\'e}vy distance} on $\mathcal{P}(\R)$ given by
	\[
	d_L(\mu,\nu) := \inf \Bigl\{\epsilon > 0: \mu((-\infty,x-\epsilon)) - \epsilon \leq \nu((-\infty,x)) \leq \mu((-\infty,x+\epsilon)) + \epsilon) \text{ for all } x \in \R \Bigr\}.
	\]
	The distance $d_L$ makes $\mathcal{P}(\R)$ into a complete metric space, and the induced topology is the same as the weak-$*$ topology from viewing $\mathcal{P}(\R)$ inside the dual of $C_0(\R)$ (see for instance \cite[Theorem 6.8]{Billingsley1999}).  Here we state \cite[Theorem 6.2]{JDW2021} specialized to $G$-free convolutions.
	
	\begin{proposition}[Uniform equicontinuity; {\cite[Theorem 6.2]{JDW2021}}] \label{prop:equicontinuity}
		Let $d_L$ be the L\'evy distance on $\mathcal{P}(\R)$.  For every $Y \subseteq \mathcal{P}(\R)$ compact and $\epsilon > 0$, there exists $\delta > 0$ such that for every digraph $G = (V,E)$ and every $\mu \in Y$ and $\nu \in \mathcal{P}(\R)$,
		\[
		d_L(\mu,\nu) < \delta \implies d_L(\boxplus_G(\mu^{\uplus 1/|V|}), \boxplus_G(\nu^{\uplus 1/|V|})) < \epsilon.
		\]
	\end{proposition}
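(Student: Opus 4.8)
This statement is the specialization of \cite[Theorem 6.2]{JDW2021} obtained by taking the tree $\cT = \Walk(G)$ and invoking the identification of the digraph convolution $\boxplus_G$ with the corresponding tree convolution explained in \S\ref{sec: independence}; applied to the inputs $(\mu^{\uplus 1/|V|},\dots,\mu^{\uplus 1/|V|})$, the equicontinuity proved there, uniform over all trees and hence over all digraphs $G$, is exactly the present claim. For the reader's convenience we outline the argument.

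Since $Y$ is compact it is tight, and so is the closure $Y'$ of its $1$-neighborhood in the Lévy metric; hence $Y'$ is compact by Prokhorov, and restricting attention to $\delta < 1$ we may assume $\mu,\nu \in Y'$. Fix a compact set $\mathcal{K}\subseteq\C^+$ with nonempty interior. The map $\mu\mapsto G_\mu|_{\mathcal K}$ is a continuous injection of $Y'$ (with the weak-$*$ topology) into $C(\mathcal K)$ --- injective by analyticity of $G_\mu$ and Stieltjes inversion, continuous by the uniform bound $|G_\mu(z)|\le 1/\im z$ together with a normal-families argument --- hence a homeomorphism onto its image, so on $Y'$ the quantity $\rho(\mu,\nu):=\sup_{z\in\mathcal K}|G_\mu(z)-G_\nu(z)|$ is uniformly equivalent to $d_L$. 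It therefore suffices to bound $\rho(\boxplus_G(\mu^{\uplus 1/|V|}),\boxplus_G(\nu^{\uplus 1/|V|}))$ in terms of $\rho(\mu,\nu)$ with a rate independent of $G$.

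Write $n=|V|$. By Proposition \ref{prop: K transform equation} and the definition of the general convolution, $\boxplus_G(\mu^{\uplus 1/n})$ is the measure with $K$-transform $\sum_{i\in V}K_{\nu_i}$, where (using $K_{\mu^{\uplus 1/n}}=\tfrac1n K_\mu$) the tuple $(K_{\nu_i})_{i\in V}$ is the unique solution of \eqref{eq: fixed point} with $K_{\mu_i}$ replaced by $\tfrac1n K_\mu$. Now $-K_\mu$ is a Nevanlinna function, since $\im K_\mu(z)=\im z-\im(1/G_\mu(z))\le 0$ on $\C^+$, and tightness of $Y'$ furnishes a single Stolz region $\Omega=\{z:\im z>R,\ |\re z|<R\,\im z\}$ with $R$ large on which every $K_\mu$, $\mu\in Y'$, is holomorphic, uniformly bounded and equicontinuous, and on which $\mu\mapsto K_\mu$ is uniformly continuous for uniform convergence. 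Because $\im K_\mu\le 0$, feeding an $n$-tuple of such functions into \eqref{eq: fixed point} keeps every argument $z-\sum_{j\leftsquigarrow i}K_{\nu_j}(z)$ inside a fixed enlargement of $\Omega$ (its imaginary part only increases), while the factor $\tfrac1n$ against at most $n$ summands keeps the real part from running away; consequently the map $(w_i)_{i\in V}\mapsto(\tfrac1n K_\mu(z-\sum_{j\leftsquigarrow i}w_j))_{i\in V}$ is, on a suitable ball of holomorphic $V$-tuples over $\Omega$, a holomorphic self-map into a relatively compact subset, with Kobayashi-metric contraction factor bounded away from $1$ uniformly in $n$ and in $E$. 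Earle--Hamilton, exactly as in \cite[Theorem 4.1]{JDW2021}, then gives not only the existence and uniqueness of $(K_{\nu_i})$ but a $G$-independent modulus of continuity for the solution map $K_\mu\mapsto(K_{\nu_i})_{i\in V}$, hence for $K_\mu\mapsto\sum_{i\in V}K_{\nu_i}$.

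Chaining the three steps proves the proposition. The crux is the uniformity over $G$: one must check that neither the common domain $\Omega$ nor the hyperbolic contraction constant degrades as $n=|V|\to\infty$ or as the in-degrees of $G$ grow, and this is precisely where the $\uplus 1/|V|$ normalization is indispensable --- it forces each $K_{\nu_i}$ to have size $O(1/n)$, so that the $n$ summands in \eqref{eq: fixed point} together produce only an $O(1)$, uniformly controlled, perturbation of $z$. A secondary point is that for $\mu$ without moments one cannot argue with power series and must work throughout with the Nevanlinna representation of $K_\mu$; this is handled by \cite{JDW2021} and is the reason the argument is phrased via $K$-transforms and the Earle--Hamilton theorem rather than via moments as in \S\ref{subsec: limit theorem compact support}.
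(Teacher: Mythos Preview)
The paper does not give its own proof of this proposition: it is simply stated as the specialization of \cite[Theorem 6.2]{JDW2021} to $\cT = \Walk(G)$, with no argument supplied. Your first sentence already matches the paper exactly, and the remainder of your write-up is a faithful outline of the Earle--Hamilton contraction argument from \cite{JDW2021}, correctly identifying the crucial role of the $\uplus 1/|V|$ normalization in keeping the contraction constant and the domain uniform in $G$; this is strictly more than what the paper provides.
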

	
	Now we can conclude the proof of the main theorem for probability measures with unbounded support.
	
	\begin{proof}[Proof of Theorem \ref{thm:limit}]
		Let $G_n = (V_n,E_n)$ be a sequence of digraphs such that $\lim_{n \to \infty} |V_n| = \infty$ and for every finite out-tree $G' = (V',E')$, the limit $\beta_{G'} = \lim_{n \to \infty} |\Hom(G',G)| / |V_n|^{|V'|}$ exists.  Let $(\mu_n)_{n \in \N}$ be a sequence of probability measures such that $\lim_{n \to \infty} \mu_n^{\uplus |V_n|} = \mu$, and write $\nu_n = \mu_n^{\uplus |V_n|}$.  In order to show that $(\boxplus_{G_k}(\mu_n))_{n \in \N}$ is a Cauchy sequence in $d_L$, fix $\epsilon > 0$.  Then $Y = \{\nu_n: n \in \N\} \cup \{\mu\}$ is compact.  By Proposition \ref{prop:equicontinuity}, there exists $\delta > 0$ such that for all $\nu$, we have
		\[
		d_L(\nu, \nu_n) < \delta \implies d_L(\boxplus_{G_n}(\nu^{\uplus 1/|V_n|}), \boxplus_{G_n}(\nu_n^{\uplus 1/|V_n|})) < \frac{\epsilon}{4},
		\]
		and the same holds with $\nu_n$ replaced by $\mu$.  Let $\sigma_R = \mu([-R,R])^{-1} \mu|_{[-R,R]}$.  By choosing $R$ sufficiently large, we can arrange that $d_L(\mu,\sigma_R) < \delta$.  For sufficiently large $n$, we also have $d_L(\nu_n,\sigma_R) < \delta$ as well.  Hence,
		\[
		d_L(\boxplus_{G_n}(\mu_n), \boxplus_{G_n}(\sigma_R^{\uplus 1/ |V_n|})) = d_L(\boxplus_{G_n})(\nu_n^{\uplus 1/|V_n|}), \boxplus_{G_n}(\sigma_R^{\uplus 1/ |V_n|})) < \frac{\epsilon}{4}.
		\]
		By Lemma \ref{lem:cpctsuppconvergence}, $\sigma_R' := \lim_{n \to \infty} \boxplus_{G_n}(\sigma_R^{\uplus 1/|V_n|})$ exists, and therefore for sufficiently large $n$,
		\[
		d_L(\boxplus_{G_n}(\sigma_R^{\uplus 1/ |V_n|}), \sigma_R') < \frac{\epsilon}{4}.
		\]
		Using the triangle inequality, for sufficiently large $n$ and $m$,
		\[
		d_L(\boxplus_{G_n}(\mu_n), \boxplus_{G_m}(\mu_m)) < \epsilon.
		\]
		Hence, $(\boxplus_{G_n}(\mu_n))_{n \in \N}$ is Cauchy in $d_L$ and hence converges to some limit $\mu'$.
		
		Similar reasoning shows that if $\epsilon$ and $\delta$ are as above and $d(\sigma_R,\mu) < \delta$, then
		\[
		d_L(\mu', \sigma_R') < \epsilon.
		\]
		Hence,
		\[
		\lim_{n \to \infty} \boxplus_{G_n}(\mu_n) = \mu' = \lim_{R \to \infty} \sigma_R'.
		\]
		Since $\sigma_R$ is the truncation of $\mu$, it only depends on $\mu$.  Moreover, $\sigma_R'$ given by Lemma \ref{lem:cpctsuppconvergence} only depends on $\mu$ and the coefficients $\beta_{G'}$.  Therefore, $\mu'$ only depends on $\mu$ and the coefficients $\beta_{G'}$.
	\end{proof}
	
	\section{Examples and applications} \label{sec: examples}
	
	In this section, we describe several classes of examples to which Theorem \ref{thm:limit} applies.
	
	\subsection{Digraphon limit method} \label{subsec: continuum limit method}
	
	In \cite{AKJW2010,JW2010,LOJW1,LOJW2,LOJW3}, certain limit theorems for BM independence associated to cones, which described the limiting behavior of finite index sets given as discretizations of a bounded region in a cone; several of the proofs evaluated moments as a sum over a certain families of partitions and then compared this to an integral using Riemann sum approximations.  Motivated by these results, we will now give a continuum limit method in the more general measure-theoretic context of digraphons, namely, Proposition \ref{prop: continuum limit}.  We caution that although the statement and proof of Proposition \ref{prop: continuum limit} are based on the same general ideas as earlier results in the BM settings, the proposition and its proof are not an exact generalization of them.  In \S \ref{subsec: BM limit theorem}, we give a more precise discussion of limit theorems for three families of cones.  We also remark that a similar limit theorem was given in \cite{AKJW2013} for BF independence, though the proof was not written in terms of discretizations per se; we leave further investigation of this case for future research.
	
	Let $(\Omega,\rho)$ be a complete probability measure space.  Let $\mathcal{E} \subseteq \Omega \times \Omega$ be measurable.  We view $\Omega$ as a vertex set and $\mathcal{E}$ as an edge set, so $(\Omega,\mathcal{E})$ is a \emph{digraphon}.  (Often digraphons are defined specifically with $\Omega = [0,1]$ and $\rho$ Lebesgue measure, but here we want the flexibility to use whatever measure space is convenient.)
	
	Now fix a digraph $G' = (V',E')$.  Note that homomorphisms $(V',E')$ to $(\Omega,\mathcal{E})$ can be described as functions $\omega: V' \to \Omega$ such that if $(v,w) \in E$, then $(\omega(v),\omega(w)) \in \mathcal{E}$.  Functions $V' \to \Omega$ may be identified with the Cartesian product $\Omega^{\times V'}$, and so we obtain
	\[
	\Hom(G', (\Omega,\mathcal{E})) = \{\omega \in \Omega^{\times V'}: (v,w) \in E \implies (\omega_v,\omega_w) \in \mathcal{E} \}.
	\]
	Recall that $\rho$ induces a probability measure $\rho^{\times V'}$ on the Cartesian product $\Omega^{\times V'}$ equipped with the product $\sigma$-algebra; $(\Omega^{\times V'},\rho^{\times V'})$ can also be completed to a complete measure space if desired.  Furthermore, the space of homomorphisms $\Hom(G', (\Omega,\mathcal{E}))$ is a measurable subset of $\Omega^{\times V'}$ since it can be expressed as
	\[
	\bigcap_{(v,w) \in E'} \{ \omega \in \Omega^{\times V'}: (\omega_v, \omega_w) \in \mathcal{E} \},
	\]
	and each of the sets in the intersection is a Cartesian product of measurable sets.  Moreover, the measure of $\Hom(G', (\Omega,\mathcal{E}))$ can be evaluated as
	\[
	\rho^{\times V'}(\Hom(G', (\Omega,\mathcal{E}))) = \int_{\Omega^{\times V'}} \prod_{(v,w) \in E'} \mathbbm{1}_\mathcal{E}(\omega_v, \omega_w)\,d\rho^{\times V'}(\omega),
	\]
	where $\omega = (\omega_v)_{v \in V'} \in \Omega^{\times V'}$.
	
	We will show that if $G_n$ is a sequence of graphs giving a discretization of $(\Omega,\mathcal{E})$, then the normalized count of homomorphisms from $G'$ to $G_n$ as in Theorem \ref{thm:limit} converges to $\rho^{\times V'}(\Hom(G', (\Omega,\mathcal{E})))$; see Proposition \ref{prop: continuum limit} above.
	
	We can relate finite digraphs and digraphons as follows.  Let $G = (V,E)$ be a finite digraph.  Let $(A_v)_{v \in V}$ be a partition of $\Omega$ into measureable sets with $\rho(A_v) = 1/|V|$ for all $v \in V$.  (For example, if $\Omega = [0,1]$ and $V = \{1,\dots,k\}$, we could take $A_j = [(j-1)/k,j/k)$.)  Let
	\begin{equation} \label{eq: E set}
		\tilde{\mathcal{E}} = \bigcup_{(v,w) \in E} A_v \times A_w \subseteq \Omega \times \Omega.
	\end{equation}
	Then we claim that
	\begin{equation} \label{eq: homomorphism discretization}
		\rho^{\times V'}(\Hom(G', (\Omega,\tilde{\mathcal{E}}))) = \frac{|\Hom(G',G)|}{|V|^{|V'|}}.
	\end{equation}
	To see this, suppose $\omega \in \Omega^{\times V'}$ and note there is a unique $\phi: V' \to V$ such that $\omega_v \in A_{\phi(v)}$ for each $v \in V$.  Moreover, $\omega \in \Hom(G',(\Omega,\tilde{\mathcal{E}}))$ if and only if $\phi \in \Hom(G',G)$.  Thus,
	\[
	\Hom(G', (\Omega,\tilde{\mathcal{E}})) = \bigsqcup_{\phi \in \Hom(G',G)} \prod_{v \in V'} A_{\phi(v)},
	\]
	where the product here is a Cartesian product.  Since $\prod_{v \in V'} A_{\phi(v)}$ has measure $1/|V|^{|V'|}$, we obtain \eqref{eq: homomorphism discretization}.
	
	\begin{fact} \label{obs: edge set difference}
		Let $(\Omega,\rho)$ be a probability measure space.  Let $\mathcal{E}, \mathcal{E}' \subseteq \Omega \times \Omega$ be measurable sets and $\mathcal{E} \Delta \mathcal{E}'$ their symmetric difference.  Let $G' = (V',E')$ be a finite digraph.  Then
		\begin{align*}
			|\rho^{\times V'}(\Hom(G',(\Omega,\mathcal{E}))) &- \rho^{\times V'}(\Hom(G',(\Omega,\mathcal{E}')))| \\
			&\leq
			\rho^{\times V'}(\Hom(G',(\Omega,\mathcal{E})) \Delta \Hom(G',(\Omega,\mathcal{E}'))) \\
			&\leq |E'| \rho^{\times 2}(\mathcal{E} \Delta \mathcal{E}').
		\end{align*}
	\end{fact}
	
	\begin{proof}
		The first inequality is immediate.  For the second, note that
		\[
		\rho^{\times V'}(\Hom(G',(\Omega,\mathcal{E})) \Delta \Hom(G',(\Omega,\mathcal{E}'))) = \norm*{\prod_{(v,w) \in E'} \mathbbm{1}_\mathcal{E}(v,w) - \prod_{(v,w) \in E'} \mathbbm{1}_{\mathcal{E}'}(v,w)}_{L^1(\rho^{\times V'})}.
		\]
		We swap out each $\mathbbm{1}_{\mathcal{E}}$ for $\mathbbm{1}_{\mathcal{E}'}$ one instance at a time.  Each swap produces an error of at most $\norm{\mathbbm{1}_\mathcal{E} - \mathbbm{1}_{\mathcal{E}'}}_{L^1(\rho^{\times 2})} = \rho^{\times 2}(\mathcal{E} \Delta \mathcal{E}')$ because the product of the other terms is zero or one.  Overall there are $|E'|$ swaps, and so the error is at most $|E'| \rho^{\times 2}(\mathcal{E} \Delta \mathcal{E}')$.
	\end{proof}
	
	\begin{proof}[Proof of Proposition \ref{prop: continuum limit}]
		The first claim follows from \eqref{eq: homomorphism discretization} and Fact \ref{obs: edge set difference}, and the second claim follows from the first claim and Theorem \ref{thm:limit}.
	\end{proof}
	
	\subsection{Limit theorems for BM-independence associated to cones} \label{subsec: BM limit theorem}
	
	In this subsection, we revisit some of the BM limit theorems from \cite{AKJW2010,JW2010,LOJW1} using Theorem \ref{thm:limit} and Proposition \ref{prop: continuum limit} from this paper.  While \cite{AKJW2010,JW2010} focused on the central limit theorem and \cite{LOJW1} focused on the Poisson limit theorem, we obtain from Theorem \ref{thm:limit} limit theorems for general sequences of measures for discretizations of three families of cones (positive orthant, light-cones, and real positive definite matrices).  We remark that the complex and quaternionic positive definite matrices were also studied in \cite{JW2010,LOJW2}, but in order to keep things technically simple we do not handle these cases here.\footnote{Complex and quaternionic positive definite matrices can be handled in the same way if one can extend Proposition \ref{prop: volume characteristic} on volume characteristic to these cases.  This has not been explicitly proved in the literature, but was stated in the complex case in \cite[Remark 3]{AKJW2010}.}
	
	First, we recall some terminology relating to convex cones.  We say $\Pi \subseteq \R^d$ is a \emph{convex cone} if it is closed under addition and positive scalar multiples.  We assume that $\Pi$ is \emph{closed} and that it is \emph{salient}, meaning that $\Pi \cap -\Pi = \{0\}$.  In this case, the relation $\preceq$ on $\R^d$ defined by setting $\xi \preceq \eta$ if and only if $\eta - \xi \in \Pi$ is a non-strict partial order (this of course also leads to a strict partial order $\prec$ as described in \S \ref{subsec: digraphs}).  We define the \emph{interval}
	\[
	[\xi,\eta] = \{\rho \in \R^d: \xi \preceq \rho \preceq \eta\}
	\]
	(which is nonempty if and only if $\xi \preceq \eta$).  We will be concerned especially with the three families of \emph{positive, symmetric} cones first studied in the context of BM-independence in \cite[\S 4, examples (1) - (3)]{JW2010}:
	\begin{itemize}
		\item the positive orthant $\R_+^d \subseteq \R^d$;
		\item the Lorentz light-cone $\Lambda_d^1 = \{ (t; \mathbf{x}) \in \R^{d+1}: t \geq \norm{x} \}$;
		\item the positive semidefinite matrices $M_d(\R)_+$ which is a subset of the space of symmetric matrices $\Sym_d(\R) \cong \R^{d(d+1)/2}$.
	\end{itemize}
	For background and classification of positive symmetric cones, see \cite{FK1994}.
	
	Given a salient closed convex cone $\Pi \subseteq \R^d$, one can obtain \emph{finite} posets by considering $I_\xi = [0, \xi] \cap \Z^d$ for $\xi \in \Pi$.  Then, as in \S \ref{subsec: G independence}, one can consider BM-independent random variables indexed by $I_\xi$, which by Proposition \ref{prop: equivalent definitions of BM independence} is equivalent to $I_\xi$-independent variables where we view $I_\xi$ is a digraph.  We will study the behavior of $\boxplus_{I_\xi}(\mu)$ as $\xi \xrightarrow{\Pi} \infty$, or as $\xi$ tends to infinity in the cone $\Pi$.  Here we recall that if $f$ is a function on the cone $\Pi$, we say that $f(\xi) \to L$ as $\xi \xrightarrow{\Pi} \infty$, if for every $\epsilon > 0$, there exists $\xi_0 \in \Pi$ such that for all $\xi \succeq \xi_0$ we have $|f(\xi) - L| < \epsilon$.  The meaning of $\xi \xrightarrow{\Pi} \infty$ in the specific cases of $\R_+^d$, $\Lambda_d^1$, and $M_d(\R)_+$ is explained in \cite[Definition 1.5]{LOJW1}.
	
	By Theorem \ref{thm:limit}, we need to study the limit as $\xi \xrightarrow{\Pi} \infty$ of $|\Hom(G',I_\xi)| / |I_\xi|^{|V'|}$ for a finite out-forest $G' = (V'E')$.  We remark that $\Hom(G',I_\xi)$ is equivalently the set of strict poset homomorphisms from $G'$ to $I_\xi$ by Fact \ref{obs: BM homomorphism equivalence}, and in the case that $G' = \mathrm{F}(\pi)$ for some non-crossing partition $\pi$, this is exactly the set of $I_\xi$ labelings that establish strict BM order on $\pi$; this is denoted by $\BMO(\pi;\xi)$ in \cite[Definition 4.1]{LOJW1}.  The limit of $|\BMO(\pi;\xi)| / |I_\xi|^{|\pi|}$ is described in \cite[Theorem 4.4, Corollary 4.5]{LOJW1}.  We will rederive this result here in two steps, first applying a variant of the continuum limit method, and then computing the volume of the limiting set explicitly using the volume characteristic of \cite{AKJW2010}.
	
	\begin{lemma} \label{lem: BM continuum limit}
		Let $\Pi$ be one of the cones as above.  Let $G' = (V',E')$ be a finite out-forest.  Let
		\[
		\Hom(G',[0,\xi]) = \{ \eta \in [0,\xi]^{\times V'}: v \rightsquigarrow w \text{ in } G' \implies \eta_v \prec \eta_w \text{ in } \Pi \} \subseteq (\R^d)^{\times V'}.
		\]
		Then
		\[
		\lim_{\xi \xrightarrow{\Pi} \infty} \left| \frac{|\Hom(G',I_\xi)|}{|I_\xi|^{|V'|}} - \frac{\vol(\Hom(G',[0,\xi]))}{\vol([0,\xi])^{|V'|}} \right| = 0.
		\]
	\end{lemma}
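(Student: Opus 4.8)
The plan is to compare the discrete count $|\Hom(G',I_\xi)|$ with the continuum volume $\vol(\Hom(G',[0,\xi]))$ by exhibiting an explicit partition of the interval $[0,\xi]$ into unit cubes indexed by the lattice points of $I_\xi$, and then invoking Proposition \ref{prop: continuum limit} with $\Omega = [0,\xi]$ (rescaled to a probability space) and $\mathcal{E}$ the strict order relation. Concretely, for each lattice point $v \in I_\xi = [0,\xi]\cap\Z^d$, set $A_v = (v + [0,1)^d) \cap [0,\xi]$; these sets partition $[0,\xi]$, and all but a vanishing fraction of them are full unit cubes of volume $1$ (the exceptions lie in a boundary layer of relative volume $O(1/R)$ as $\xi \xrightarrow{\Pi}\infty$, where $R$ measures the size of $\xi$). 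Dividing through by $\vol([0,\xi]) = |I_\xi|(1 + o(1))$ turns this into a measurable partition into sets of nearly equal measure $1/|I_\xi|$.

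The next step is to check that the discretized edge set $\tilde{\mathcal{E}} = \bigcup_{v \rightsquigarrow w \text{ in } I_\xi} A_v \times A_w$ converges in measure to $\mathcal{E} = \{(\eta,\zeta) \in [0,\xi]^2 : \eta \prec \zeta \text{ in } \Pi\}$. The symmetric difference $\tilde{\mathcal{E}} \,\Delta\, \mathcal{E}$ consists of pairs $(\eta,\zeta)$ for which the strict order relation among the lattice cubes containing $\eta$ and $\zeta$ disagrees with the strict order relation of $\eta,\zeta$ themselves; such pairs force $\zeta - \eta$ to lie within bounded $\ell^\infty$-distance of the boundary $\partial\Pi$ of the cone. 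The measure of this ``near-boundary'' set, normalized by $\vol([0,\xi])^2$, tends to $0$ as $\xi \xrightarrow{\Pi} \infty$ because $\partial\Pi$ has measure zero and the cones under consideration ($\R_+^d$, $\Lambda_d^1$, $M_d(\R)_+$) are sufficiently regular that the volume of an $\epsilon$-neighborhood of $\partial\Pi$ inside $[0,\xi]$ scales like $\epsilon \cdot \vol([0,\xi])$; this is exactly the kind of estimate already carried out in \cite{LOJW1}. Granting $\rho(\tilde{\mathcal{E}}_n \,\Delta\, \mathcal{E}) \to 0$ along any sequence $\xi_n \xrightarrow{\Pi}\infty$, Proposition \ref{prop: continuum limit} (or rather its proof, via \eqref{eq: homomorphism discretization} and Observation \ref{obs: edge set difference}) yields
\[
\left| \frac{|\Hom(G',I_\xi)|}{|I_\xi|^{|V'|}} - \rho^{\times V'}(\Hom(G',(\Omega,\mathcal{E}))) \right| \to 0,
\]
and one identifies $\rho^{\times V'}(\Hom(G',(\Omega,\mathcal{E}))) = \vol(\Hom(G',[0,\xi]))/\vol([0,\xi])^{|V'|}$ directly from the definitions, after accounting for the boundary-layer correction that makes $\vol([0,\xi]) \ne |I_\xi|$ exactly. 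Combining the two errors gives the claim.

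The main obstacle is the geometric estimate that the fraction of pairs $(\eta,\zeta) \in [0,\xi]^2$ with $\zeta - \eta$ within bounded distance of $\partial\Pi$ tends to zero — equivalently, that the lattice-cube discretization of the cone order relation is asymptotically faithful. For the orthant $\R_+^d$ this is elementary (the condition $v \rightsquigarrow w$ is just coordinatewise strict inequality, and lattice and continuum versions differ only when some coordinate of $\zeta - \eta$ lies in $(0,1)$, a set of relative measure $O(1/R)$). For the Lorentz cone and the positive-semidefinite cone the boundary is curved, so one needs that the defining inequalities ($t \ge \|\mathbf{x}\|$, resp. positivity of eigenvalues) are stable under perturbations of size $O(1)$ except on a set whose measure is controlled by the surface area of $\partial\Pi \cap [0,\xi]$ times the perturbation size; this is where the argument must lean on the specific structure of each cone, and the cleanest route is to cite the corresponding volume-convergence lemmas already established in \cite[\S 4]{LOJW1} rather than redo them. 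I would therefore organize the proof so that the cone-specific input is isolated into a single invocation of those prior estimates, with everything else handled uniformly by the continuum limit machinery of \S\ref{subsec: continuum limit method}.
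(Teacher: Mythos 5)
Your proposal follows essentially the same route as the paper: discretize $[0,\xi]$ by unit cubes based at the lattice points of $I_\xi$, reduce the discrepancy between the lattice count and the continuum volume to the measure of a boundary layer (of $\partial[0,\xi]$ and of the set where the cone relation $\prec$ is unstable under $O(1)$ perturbations), handle that cone-specific estimate by explicit computation or by citing \cite{LOJW1}, and conclude by the swapping argument of Observation \ref{obs: edge set difference} rather than by a verbatim application of Proposition \ref{prop: continuum limit}. The only caveat is cosmetic: for the curved cones the sets $A_v=(v+[0,1)^d)\cap[0,\xi]$, $v\in I_\xi$, need not exactly partition $[0,\xi]$ nor have equal measure, but you correctly absorb this into the same boundary-layer error, exactly as the paper does with its symmetric-difference formulation.
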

	
	The idea of this lemma is the same as Proposition \ref{prop: continuum limit}, but here we do not have a \emph{fixed} continuum limit, since the continuum object $\Hom(G',[0,\xi])$ also depends on $\xi$.  And of course, we are taking the limit as $\xi \xrightarrow{\Pi} \infty$ rather than only a limit as $N \to \infty$.  Thus, we must proceed carefully to define the discretized set and estimate the symmetric difference.  Here we will leave some details to the reader since \cite{LOJW1} already gave another argument for the limit in Lemma \ref{lem: BM continuum limit}.
	
	\begin{lemma} \label{lem: BM symmetric difference}
		Let $\Pi \subseteq \R^d$ be one of the cones above.  For $\xi \in \R^d$, let $Q_\xi$ be the unit cube $\prod_{j=1}^d [\xi_j-1/2,\xi_j+1/2]$.  Let
		\[
		A_\xi = \bigcup_{\eta \in I_\xi} Q_\eta
		\]
		Then
		\[
		\lim_{\xi \xrightarrow{\Pi} \infty} \frac{\vol(A_\xi \Delta [0,\xi])}{\vol([0,\xi])} = 0.
		\]
		Similarly, let
		\begin{align*}
			E_\xi &= \{(\eta_1,\eta_2) \in \Z^d \times \Z^d: 0 \preceq \eta_1 \prec \eta_2 \preceq \xi\} \\
			B_\xi &= \bigcup_{(\eta_1,\eta_2) \in E_\xi \cap \Z^{2d}} Q_{(\eta_1,\eta_2)}.
		\end{align*}
		Then
		\[
		\lim_{\xi \xrightarrow{\Pi} \infty} \frac{\vol(B_\xi \Delta E_\xi)}{\vol([0,\xi])^2} = 0.
		\]
	\end{lemma}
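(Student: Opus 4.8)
The plan is to squeeze $A_\xi$ between an eroded and a dilated copy of the convex body $[0,\xi] = \Pi\cap(\xi-\Pi)$, so that $A_\xi\,\Delta\,[0,\xi]$ lands inside a fixed-width shell around $\partial[0,\xi]$, and then to invoke the fact that such a shell is relatively negligible as $\xi\xrightarrow{\Pi}\infty$. Let $Q_0 = [-1/2,1/2]^d$, so $Q_\eta = \eta + Q_0$, and for $x\in\R^d$ let $\hat x\in\Z^d$ be a nearest lattice point, so that $\|x-\hat x\|_\infty\le 1/2$ and $x\in Q_{\hat x}$. Writing $+$ for Minkowski sum and $[0,\xi]\ominus Q_0 := \{x : x+Q_0\subseteq[0,\xi]\}$ for erosion, the key observation is the pair of containments
\[
[0,\xi]\ominus Q_0 \ \subseteq\ A_\xi\ \subseteq\ [0,\xi]+Q_0 ,
\]
the left one because $x+Q_0\subseteq[0,\xi]$ forces $\hat x\in[0,\xi]\cap\Z^d = I_\xi$ and hence $x\in Q_{\hat x}\subseteq A_\xi$, and the right one because each $Q_\eta$ with $\eta\in I_\xi\subseteq[0,\xi]$ lies in $[0,\xi]+Q_0$. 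Since $[0,\xi]$ sits between these two convex bodies, $A_\xi\,\Delta\,[0,\xi]$ is contained in $([0,\xi]+Q_0)\setminus([0,\xi]\ominus Q_0)$, every point of which is at $\ell^\infty$-distance at most $1/2$ from $\partial[0,\xi]$, and so
\[
\vol\bigl(A_\xi\,\Delta\,[0,\xi]\bigr)\ \le\ \vol\bigl([0,\xi]+Q_0\bigr) - \vol\bigl([0,\xi]\ominus Q_0\bigr).
\]

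The remaining task — to show this bound is $o\bigl(\vol([0,\xi])\bigr)$ as $\xi\xrightarrow{\Pi}\infty$ — is where the specific cone and the precise meaning of $\xi\xrightarrow{\Pi}\infty$ from \cite[Definition 1.5]{LOJW1} enter, and I expect it to be the main obstacle. For $\Pi = \R_+^d$ it is elementary: $[0,\xi] = \prod_{j=1}^d[0,\xi_j]$, so the bound equals $\prod_j(\xi_j+1) - \prod_j(\xi_j-1)$, which is $o\bigl(\prod_j\xi_j\bigr)$ precisely because $\xi\xrightarrow{\R_+^d}\infty$ forces each $\xi_j\to\infty$. For $\Pi = \Lambda_d^1$ and $\Pi = M_d(\R)_+$ I would use the explicit parametrizations of $[0,\xi]$ recalled in \cite{LOJW1}: in each case every relevant linear ``width'' of $[0,\xi]$ (the coordinates; the timelike height; the eigenvalues) tends to infinity, so that a fixed-width neighborhood of $\partial[0,\xi]$ occupies a vanishing fraction of $\vol([0,\xi])$. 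This is in essence the computation behind \cite[Theorem 4.4, Corollary 4.5]{LOJW1}, which I would import rather than reprove — this is also the sense in which the lemma is ``closely related'' to the method of \cite{LOJW1}.

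Finally, for the second assertion I would run the identical argument one dimension up, in $\R^{2d}$, with the convex region $E_\xi$ in place of $[0,\xi]$ (replacing $\prec$ by $\preceq$ costs nothing, since the diagonal is null) and the cube $[-1/2,1/2]^{2d}$ in place of $Q_0$. The one genuinely new point is that coordinatewise rounding of a pair $(\zeta_1,\zeta_2)$ with $\zeta_1\prec\zeta_2$ can produce a pair $(\hat\zeta_1,\hat\zeta_2)$ for which the inequality is only non-strict. To absorb this, I would erode $E_\xi$ not only by the cube but also away from a fixed-width neighborhood of the ``thin'' set $\{(\zeta_1,\zeta_2) : \|\zeta_2-\zeta_1\|_\infty\le 1\}$; since $\|\zeta_2-\zeta_1\|_\infty > 1$ guarantees $\hat\zeta_1\neq\hat\zeta_2$ and hence $\hat\zeta_1\prec\hat\zeta_2$, the rounded pair then lies in $E_\xi\cap\Z^{2d}$. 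The discarded neighborhood has volume $O\bigl(\vol([0,\xi])\bigr) = o\bigl(\vol(E_\xi)\bigr)$, using that $\vol(E_\xi) = \int_{[0,\xi]}\vol([\zeta_1,\xi])\,d\zeta_1 \gg \vol([0,\xi])$ once $[0,\xi]$ is large. With this adjustment the same sandwiching bounds $\vol\bigl(B_\xi\,\Delta\,E_\xi\bigr)$ by the volume of a fixed-width neighborhood of $\partial E_\xi$ plus a term that is $o\bigl(\vol(E_\xi)\bigr)$, and one concludes exactly as in the first part, since $E_\xi$ — being built from two nested copies of $[0,\xi]$ — inherits the property that a fixed-width boundary shell is a vanishing fraction of its volume as $\xi\xrightarrow{\Pi}\infty$.
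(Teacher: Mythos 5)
Your proposal is correct and follows essentially the same route as the paper: both arguments bound $A_\xi \,\Delta\, [0,\xi]$ (resp. $B_\xi \,\Delta\, E_\xi$) by a fixed-width $\ell^\infty$-neighborhood of the boundary of the convex region and then reduce to showing that such a shell has volume $o(\vol([0,\xi]))$ (resp. $o(\vol([0,\xi])^2)$) as $\xi \xrightarrow{\Pi} \infty$, with the cone-by-cone estimate handled by explicit computation (which the paper leaves to the reader and you carry out for $\R_+^d$ and import from \cite{LOJW1} otherwise). Your additional care with the erosion/dilation sandwich and with the strict-inequality issue when rounding pairs in the second claim only fills in details the paper glosses over.
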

	
	\begin{proof}
		For the first claim, we note that $A_\xi \, \Delta \, [0,\xi]$ is contained in the union of the cubes that intersect the boundary $\partial [0,\xi]$.  Hence, in particular, letting $N_\delta(\partial [0,\xi])$ be the closed $\delta$-neighborhood of the boundary with respect to the $\ell^\infty$ metric on $\R^d$, we have
		\[
		\vol(A_\xi \Delta [0,\xi]) \leq \vol(N_{1/2}(\partial [0,\xi])),
		\]
		and so the claim reduces to proving that
		\[
		\lim_{\xi \xrightarrow{\Pi} \infty} \frac{\vol(N_{1/2}(\partial [0,\xi]))}{\vol[0,\xi]}.
		\]
		This can be proved by explicit estimates in each of the three cases of $\Pi$ under consideration here.  We leave the details to the reader.  For the second claim, one can similarly reduce to the showing that
		\[
		\lim_{\xi \xrightarrow{\Pi} \infty} \frac{\vol(N_{1/2}(\partial E_\xi))}{\vol[0,\xi]^2} = 0,
		\]
		and then perform direct estimates for each case of $\Pi$.
	\end{proof}
	
	Lemma \ref{lem: BM continuum limit} follows from Lemma \ref{lem: BM symmetric difference} by similar reasoning as we used in Fact \ref{obs: edge set difference}.
	
	It remains to compute the volume of $|\Hom(G',[0,\xi])|$ appearing in Lemma \ref{lem: BM continuum limit}.  This computation drastically simplifies due to the special geometric structure of the cones under consideration, as shown by Kula and the third author in  \cite{AKJW2010}.
	
	\begin{proposition}[Existence of volume characteristic; {\cite[Theorem 2]{AKJW2010}}] \label{prop: volume characteristic}
		For each of the positive symmetric cones $\Pi$ we consider (namely $\R_+^d$, $\Lambda_d^1$, and $M_d(\R)_+$)\footnote{The same also holds for $M_d(\C)$ by \cite[Remark 3]{AKJW2010}, but the proof is not given in detail.} there exists a sequence $\displaystyle (\gamma_n(\Pi))_{n\geq 1}$ such that for any $\xi\in \Pi$ and any $n \in \N$
		\[
		\int_{\rho\in [0, \xi]} \vol([0,\rho])^{n-1} d(\rho) = \gamma_n(\Pi) \vol([0,\xi])^n.
		\]
	\end{proposition}
	
	The sequence $\gamma_n(\Pi)$ is called the \emph{volume characteristic sequence} for the cone $\Pi$.  It allows for a recursive computation of the volume of $\Hom(G',[0,\xi])$, as described in \cite[Theorem 4.4, Corollary 4.5]{LOJW1}.  Here we express the result of the computation explicitly as a product rather than giving a recursive description as in \cite{LOJW1}.
	
	\begin{lemma} \label{lem: BM homomorphism volume}
		Let $G' = (V',E')$ be a finite out-forest, let $\prec$ denote the strict partial order obtained as the transitive closure of $E'$ as a relation on $V'$, and let $\preceq$ be the corresponding non-strict partial order.  For the cones $\Pi$ under consideration and $\xi \in \Pi$, we have
		\[
		\vol(\Hom(G',[0,\xi])) = \vol([0,\xi])^{|V'|} \prod_{v \in V'} \gamma_{k(v)},
		\]
		where $k(v) = |\{w \in V': w \succeq v \}|$.
	\end{lemma}
	
	\begin{proof}
		We proceed by induction on $V'$.  If $|V'| = 1$, then both sides are equal to $\vol[0,\xi]$.
		
		Next, suppose that $|V'| > 1$, and suppose that $G'$ has more than one connected component (here components are defined by forgetting the orientation of the edges).  Write $G'$ as the disjoint union of components $G_1'$, \dots, $G_k'$.  Then
		\[
		\Hom(G',[0,\xi]) \cong \Hom(G_1',[0,\xi]) \times \dots \times \Hom(G_k',[0,\xi]).
		\]
		By applying the induction hypothesis to $G_j'$, we get
		\[
		\vol(\Hom(G',[0,\xi])) = \prod_{j=1}^k \left( \vol([0,\xi])^{|V_j'|} \prod_{v \in V_j'} \gamma_{k(v)} \right) = \vol([0,\xi])^{|V'|} \prod_{v \in V_j'} \gamma_{k(v)}.
		\]
		
		Finally, suppose that $|V'| > 1$ and that $G'$ has only one component, i.e.\ $G'$ is a out-tree.  Let $r$ be the root vertex (see \S \ref{subsec: digraphs}) and let $v_1$, \dots, $v_k$ be its neighbors, so that $r \rightsquigarrow v_j$.  Let $G_j'$ be the subtree under $v_j$, or the out-tree with vertex set $\{w: w \succeq v_j\}$.  Observe that
		\[
		\Hom(G',[0,\xi]) = \{ (\eta,\eta_1,\dots,\eta_k): \eta \in [0,\xi], \eta_j \in \Hom(G_j', (\eta,\xi]) \},
		\]
		where $(\eta,\xi] = \{\zeta: \eta \prec \zeta \preceq \xi\}$; this follows by first choosing the point $\eta$ where the root $r$ is mapped and then restricting the homomorphism to each of the subtrees $G_j$.  It follows from the Fubini--Tonelli theorem that
		\[
		\vol(\Hom(G',[0,\xi])) = \int_{[0,\xi]} \prod_{j=1}^k \vol(\Hom(G_j', (\eta,\xi]))\,d\eta.
		\]
		By ignoring the boundary, we can use $[\eta,\xi]$ instead of $(\eta,\xi]$.  Now perform the change of variables $\eta \mapsto \xi - \eta$ to obtain
		\[
		\vol(\Hom(G',[0,\xi])) = \int_{[0,\xi]} \prod_{j=1}^k \vol(\Hom(G_j', [0,\eta])) \,d\eta.
		\]
		Applying the induction hypothesis to $G_j'$, we obtain
		\begin{align*}
			\vol(\Hom(G',[0,\xi])) &= \int_{[0,\xi]} \prod_{j=1}^k \left( \vol([0,\eta])^{|V_j'|} \prod_{v \in V_j'} \gamma_{k(v)} \right) \,d\eta \\
			&= \int_{[0,\xi]} \vol([0,\eta])^{|V'|-1} \,d\eta \cdot \prod_{v \in V' \setminus \{r\}} \gamma_{k(v)}.
		\end{align*}
		By Proposition \ref{prop: volume characteristic},
		\begin{align*}
			\vol(\Hom(G',[0,\xi])) &= \vol([0,\xi])^{|V'|} \gamma_{|V'|}  \prod_{v \in V' \setminus \{r\}} \gamma_{k(v)} \\
			&= \vol([0,\xi])^{|V'|} \prod_{v \in V'} \gamma_{k(v)}
		\end{align*}
		since $k(r) = |V'|$.
	\end{proof}
	
	\begin{remark}
		Note that analogous computations in \cite{LOJW1} are written in terms of the partition $\pi$ rather than the out-forest $G'$, and thus correspond to taking $G' = \mathrm{F}(\pi)$.  The case of several connected components $G_1'$, \dots, $G_k'$ corresponds to when $\pi$ is the disjoint union or concatenation of partitions $\pi_1$, \dots, $\pi_k$.  Similarly, if $G'$ has only one component and we look at the branches $G_j'$, this corresponds to taking a partition $\pi$ with only one minimal block $B$ and looking at the subpartitions $\pi_1$, \dots, $\pi_k$ in between consecutive elements of the block $B$.
	\end{remark}
	
	Putting together Lemmas \ref{lem: BM continuum limit} and \ref{lem: BM homomorphism volume} with Theorem \ref{thm:limit}, we obtain the following result.
	
	\begin{theorem}[BM limit theorems for positive symmetric cones] \label{thm: BM limit theorem}
		Let $\Pi$ be one of the cones $\R_+^d$, $\Lambda_d^1$, or $M_d(\R)_+$, and let $I_\xi$ be as above.  For a finite out-forest $G' = (V',E')$, we have
		\[
		\lim_{\xi \xrightarrow{\Pi} \infty} \frac{|\Hom(G',I_\xi)|}{|I_\xi|^{|V'|}} = \prod_{v \in V'} \gamma_{k(v)}.
		\]
		In particular, by Theorem \ref{thm:limit}, if $\mu_\xi$ is a family of probability measures such that $\lim_{\xi \xrightarrow{\Pi} \infty} \mu_\xi^{\uplus |I_\xi|} = \mu$, then $\widehat{\mu} = \lim_{\xi \xrightarrow{\Pi} \infty} \boxplus_{I_\xi}(\mu)$ exists.  Moreover, in light of Lemma \ref{lem:cpctsuppconvergence}, if $\mu$ is compactly supported, then so is $\widehat{\mu}$, and
		\[
		m_k(\widehat{\mu}) = \sum_{\pi \in \mathcal{NC}_k} \prod_{v \in \mathrm{F}(\pi)} \gamma_{k(v)} \prod_{B \in \pi} \kappa_{\bool,|B|}(\mu).
		\]
	\end{theorem}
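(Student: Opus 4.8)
The strategy is to deduce the theorem from the pieces already assembled: the limit of normalized homomorphism counts will come from combining Lemma \ref{lem: BM continuum limit} with Lemma \ref{lem: BM homomorphism volume}, and the conclusions about $\widehat\mu$ will then follow by running the argument of Theorem \ref{thm:limit} (together with Lemma \ref{lem:cpctsuppconvergence}) for the \emph{net} of digraphs $(I_\xi)$ indexed by $\xi \xrightarrow{\Pi} \infty$.

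First I would prove the homomorphism-count limit. By Lemma \ref{lem: BM homomorphism volume}, for every $\xi \in \Pi$ one has $\vol(\Hom(G',[0,\xi])) = \vol([0,\xi])^{|V'|}\prod_{v\in V'}\gamma_{k(v)}$, so the ratio $\vol(\Hom(G',[0,\xi]))/\vol([0,\xi])^{|V'|}$ is \emph{exactly} $\prod_{v\in V'}\gamma_{k(v)}$, independent of $\xi$. Feeding this into Lemma \ref{lem: BM continuum limit} gives
\[
\lim_{\xi \xrightarrow{\Pi} \infty} \frac{|\Hom(G',I_\xi)|}{|I_\xi|^{|V'|}} = \prod_{v \in V'} \gamma_{k(v)},
\]
which is the first assertion; specializing to connected $G'$ shows that $\beta_{G'} = \prod_{v\in V'}\gamma_{k(v)}$ for every finite rooted tree, and multiplicativity over components (as in the proof of Lemma \ref{lem:cpctsuppconvergence}) extends this to forests.

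Next I would obtain the existence of $\widehat\mu$. Here I would note that $|I_\xi| \to \infty$ as $\xi \xrightarrow{\Pi}\infty$ — since $\vol[0,\xi]\to\infty$ for each of the three cones and $|I_\xi| = \vol(A_\xi)$ with $A_\xi$ as in Lemma \ref{lem: BM symmetric difference} — so the net $(I_\xi)_{\xi\xrightarrow{\Pi}\infty}$ plays the role of the sequence $(G_n)$ in Theorem \ref{thm:limit}. The proof of Theorem \ref{thm:limit} then applies with essentially no change: Lemma \ref{lem:cpctsuppconvergence} holds for this net because, for fixed $k$, it writes $m_k(\boxplus_{I_\xi}(\sigma^{\uplus 1/|I_\xi|}))$ as a finite $\mathcal{NC}_k$-sum of the homomorphism-count ratios treated above times products of boolean cumulants, with support radius bounded by $4\rad(\sigma)$ uniformly in $\xi$; and Proposition \ref{prop:equicontinuity} is already uniform over all digraphs. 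Thus, for $\mu_\xi$ with $\mu_\xi^{\uplus |I_\xi|}\to\mu$, the same Cauchy-in-$d_L$ argument (truncate $\mu$ to $\sigma_R$, compare $\boxplus_{I_\xi}(\mu_\xi)$ with $\boxplus_{I_\xi}(\sigma_R^{\uplus 1/|I_\xi|})$ via Proposition \ref{prop:equicontinuity}, and use the net form of Lemma \ref{lem:cpctsuppconvergence} for the latter) shows $\boxplus_{I_\xi}(\mu_\xi)$ is Cauchy along $\xi\xrightarrow{\Pi}\infty$, hence convergent to some $\widehat\mu$; by the last clause of Theorem \ref{thm:limit} the limit depends only on $\mu$ and the $\beta_{G'}$.

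Finally, for the moment formula in the compactly supported case I would take the specific choice $\mu_\xi = \mu^{\uplus 1/|I_\xi|}$ (valid since $(\mu^{\uplus 1/|I_\xi|})^{\uplus |I_\xi|} = \mu$, and yielding the same $\widehat\mu$ as any other valid family by the dependence clause just cited). The net version of \eqref{eq: limit moments} gives $m_k(\widehat\mu) = \sum_{\pi\in\mathcal{NC}_k}\beta_{\mathrm{F}(\pi)}\prod_{B\in\pi}\kappa_{\bool,|B|}(\mu)$, and the first part, applied to the forest $\mathrm{F}(\pi)$, gives $\beta_{\mathrm{F}(\pi)} = \prod_{v\in\mathrm{F}(\pi)}\gamma_{k(v)}$; the radius bound in Lemma \ref{lem:cpctsuppconvergence} also shows $\widehat\mu$ is compactly supported. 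Substituting yields the stated formula. The analytic heart of the argument lies entirely in Lemmas \ref{lem: BM symmetric difference} and \ref{lem: BM homomorphism volume}, which are already proved; the only genuinely new point — and the one I would be most careful about — is the bookkeeping in the previous paragraph, namely that the proof of Theorem \ref{thm:limit} is insensitive to replacing a sequence by the net $\xi \xrightarrow{\Pi}\infty$.
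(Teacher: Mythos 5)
Your proposal is correct and follows essentially the same route as the paper: the first claim is obtained by combining Lemma \ref{lem: BM continuum limit} with Lemma \ref{lem: BM homomorphism volume}, and the remaining assertions follow by invoking Theorem \ref{thm:limit} and Lemma \ref{lem:cpctsuppconvergence} for the family $(I_\xi)$. Your extra care in checking that the sequence-indexed arguments transfer to the net $\xi \xrightarrow{\Pi} \infty$ (and that $|I_\xi| \to \infty$) is a point the paper leaves implicit, and your verification of it is sound.
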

	
	\begin{example}[BM central limit theorems]
		To obtain BM central limit theorems for positive symmetric cones, we must plug in the Boolean central limit distribution $(1/2)(\delta_{-1} + \delta_1)$ for $\mu$ into Theorem \ref{thm: BM limit theorem}.  Thus, the Boolean cumulants of $\mu$ are all zero except for the second cumulant which is one.  Hence, the central limit distribution satisfies
		\[
		m_k(\widehat{\mu}) = \sum_{\pi \in \mathcal{NC}_k^{(2)}} \prod_{v \in \mathrm{F}(\pi)} \gamma_{k(v)},
		\]
		where $\mathcal{NC}_k^{(2)}$ is the set of non-crossing \emph{pair} partitions of $[k]$.  Note that this agrees with the moment formula for central limit measures associated to positive cones from \cite[Theorem 7]{AKJW2010} in light of \cite[\S 8, item (1)]{AKJW2010}, since for a pair partition, the number of blocks nested inside some block is the same as half of the number of indices inside it\footnote{Note that \cite[Theorem 7]{AKJW2010} lists the cones $\Lambda_d^1$, $M_d(\R)_+$, and $M_d(\C)_+$ while we focus on $\R_+^d$, $\Lambda_d^1$, and $M_d(\R)_+$.  The reasoning in \cite{AKJW2010} applies equally well to $\R_+^d$.  Similarly, by \cite[Remark 3]{AKJW2010}, our result can be applied to $M_d(\C)_+$.}
	\end{example}
	
	\begin{example}[BM Poisson limit theorems] \label{ex: BM LSN}
		The law of small numbers or Poisson limit theorem for BM independence studied in \cite{LOJW1} is a special case of Theorem \ref{thm: BM limit theorem}.  For the Poisson limit theorem, we must plug in for $\mu$ the Boolean analog of the Poisson distribution, which turns out to be $\frac{1}{1 + \lambda} \delta_0 + \frac{\lambda}{1 + \lambda} \delta_{1 + \lambda}$.  Indeed, one can show by direct computation of $K$-transforms that
		\[
		[(1 - \lambda/n) \delta_0 + (\lambda/n) \delta_1]^{\uplus n} \to \frac{1}{1 + \lambda} \delta_0 + \frac{\lambda}{1 + \lambda} \delta_{1 + \lambda};
		\]
		and that the Boolean cumulants of $\frac{1}{1 + \lambda} \delta_0 + \frac{\lambda}{1 + \lambda} \delta_{1 + \lambda}$ are all equal to $\lambda$.  By Theorem \ref{thm: BM limit theorem}, if $\mu_\xi$ is some measure with $\mu_\xi^{\uplus |I_\xi|} \to \mu$, then we have $\boxplus_{I_\xi}(\mu_\xi) \to \widehat{\mu}$ where
		\[
		m_k(\widehat{\mu}) = \sum_{\pi \in \mathcal{NC}_k} \left( \prod_{v \in \mathrm{F}(\pi)} \gamma_{k(v)} \right) \lambda^{|\pi|}.
		\]
		This is the same result as \cite[Theorem 4.4]{LOJW1} up to some technical differences.
		
		Specifically, \cite[Theorem 4.4]{LOJW1} allowed the convolution of several different measures, rather than only copies of the same measure.  Our result Theorem \ref{thm:limit} could similarly be generalized to consider $\boxplus_{G_n}(\mu_{n,1},\dots,\mu_{n,|V_n|})$ where $\lim_{n \to \infty} \sup_{j=1,\dots,n} d_L(\mu_{n,j},\mu) = 0$ after generalizing \cite[Theorem 6.2]{JDW2021} to allow several different input measures.  However, that is beyond the scope of this work.
		
		Note also that the hypotheses and conclusion of \cite[Theorem 4.4]{LOJW1} use convergence of moments rather than weak-$*$ convergence of measures; these are equivalent for measure supported on a fixed compact set, but neither type of convergence implies the other in general.
	\end{example}

	
	
	\subsection{Iterated composition of digraphs} \label{subsec: iterated composition}
	
	Another motivating case of the continuum limit method is the setting of iterated composition of digraphs studied in \cite{JekelLiu2020} (which of course also worked in the more general setting of tree independences).
	
	First, we recall from \cite[\S 5.5]{JekelLiu2020} the composition operation on digraphs.  Let $\Digraph(n)$ be the set of directed graphs on $[n]$.  Let $G \in \Digraph(k)$ and let $G_j \in \Digraph(n_j)$ for $j = 1$, \dots, $k$.  Let $N = n_1 + \dots + n_k$ and let $\iota_j: [n_j] \to [N]$ be the inclusion $\iota_j(i) = n_1 + \dots + n_{j-1} + i$.  Then $G(G_1,\dots,G_k)$ is the digraph $G'$ on vertex set $[N]$ described by
	\[
	E' = \{ (\iota_j(v),\iota_j(w)): v \rightsquigarrow w \text{ in } G_j \} \cup \{(\iota_i(v),\iota_j(w)): i \rightsquigarrow j \text{ in } G, v \in V_i, w \in V_j \}.
	\]
	In other words, we create disjoint copies of $G_1$, \dots, $G_k$, and then whenever $i \rightsquigarrow j$ in $G$ we add edges from every vertex in $G_i$ to every vertex in $G_j$.  This composition operation defines a (symmetric) operad structure (see \cite{Leinster2004} for background on operads).
	
	We focus here on iterated compositions of a fixed graph $G \in \Digraph(n)$.  Define inductively $G^{\circ k}$ by $G^{\circ 1} = G$ and $G^{\circ(k+1)} = G(G^{\circ k},\dots,G^{\circ k})$.  Limit theorems for such iterated compositions of the same digraph are given in \cite{JekelLiu2020} and \cite{JDW2021}.  The idea is essentially a continuum limit construction, where the limiting measure space is an infinite product, and the finite approximants are given by cylinder sets.
	
	As motivation, let us describe the edge structure in $G^{\circ k}$, starting with $G^{\circ 2}$.  The vertex set of $G^{\circ 2}$ is $[n]^2$, which we view as $[n] \times [n]$, where the first coordinate describes the position in the \emph{outer} graph in the composition (i.e.\ \emph{which} of the $n$ copies of $G$ you are in), and the second coordinate describes the position in the \emph{inner} graph in the composition.  Then $(i_1,i_2) \rightsquigarrow (j_1,j_2)$ if and only if either $i_1 \rightsquigarrow i_2$ in the outer graph, or $i_1 = i_2$ and $j_1 \rightsquigarrow j_2$ in the inner graph.  Similarly, the vertex set of $G^{\circ k}$ can be described as $[n]^k$ where the first coordinate corresponds to the outermost graph and the last coordinate corresponds to the innermost graph in the composition.  To determine when $(i_1,\dots,i_k) \rightsquigarrow (j_1,\dots,j_k)$, one looks at the first coordinate where $i_t \neq j_t$ and then checks whether $i_t \rightsquigarrow j_t$ in the graph at the $t$th innermost level of the composition.
	
	Hence, to study the limit as $k \to \infty$, we use a digraphon on the infinite product space $\Omega = [n]^{\times \N}$.  Moreover, let $\rho$ be the infinite product of the uniform probability measure on $[n]$, which is a Radon measure on $\Omega$.  Let $\mathcal{E}$ be the set of pairs $(\mathbf{i},\mathbf{j}) \in \Omega \times \Omega$ such that if $t$ is the first index where $i_t \neq j_t$, then $i_t \rightsquigarrow j_t$ in $G$.  Letting $E_k$ be the edge set of $G^{\circ k}$, we view $E_k \times \Omega$ as a subset of $\Omega$, where $E_k$ determines the values of the first $k$ coordinates.  Then $E_{k+1} \times \Omega \subseteq E_k \times \Omega$, and we have
	\[
	\mathcal{E} = \bigcap_{k=1}^\infty E_k \times \Omega.
	\]
	Hence, by continuity of the measure,
	\[
	\lim_{k \to \infty} \rho^{\times 2}(\mathcal{E} \Delta (E_k \times \Omega))=0.
	\]
	Therefore, by Proposition \ref{prop: continuum limit}, we have the following result.
	
	\begin{theorem}[Limit theorem for iterated composition compare {\cite[Theorem 8.6]{JekelLiu2020}, \cite[Theorem 6.1]{JDW2021}}]
		Fix $G \in \Digraph(n)$, and let $G^{\circ k}$ be its $k$-fold iterated composition.  Let $\Omega = [n]^{\N}$ and let $\mathcal{E}$ be the edge set described above. 
		Let $G'$ be an out-forest.  Then
		\[
		\lim_{k \to \infty} \frac{|\Hom(G',G^{\circ k})|}{n^{k|V'|}} = \rho^{\times V'}(\Hom(G',(\Omega,\mathcal{E})).
		\]
		Therefore, if $\mu_k \in \mathcal{P}(\R)$ such that $\mu_k^{\uplus n^k} \to \mu$, then the limit $\widehat{\mu} = \lim_{k \to \infty} \boxplus_{G^{\circ k}}(\mu_k)$ exists.
	\end{theorem}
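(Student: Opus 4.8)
The plan is to exhibit $(G^{\circ k})_{k\ge 1}$ as a sequence of discretizations of the measurable digraph $(\Omega,\mathcal{E})$ and then invoke Proposition~\ref{prop: continuum limit}. Here $\Omega=[n]^{\N}$ carries the completion $\rho$ of the infinite product of uniform probability measures on $[n]$, with coordinates ordered from the outermost composition level inward, and $\mathcal{E}=\{(\mathbf{i},\mathbf{j}):i_t\rightsquigarrow j_t\text{ in }G,\text{ where }t\text{ is the least index with }i_t\ne j_t\}$; this is a countable union of cylinder sets, hence measurable. First I would identify $V(G^{\circ k})$ with $[n]^k$, as in the paper, and then with the level-$k$ cylinders $A_{k,v}=\{v\}\times[n]^{\{k+1,k+2,\dots\}}$, $v\in[n]^k$; these form a measurable partition of $\Omega$ into $n^k=|V(G^{\circ k})|$ pieces of equal measure $n^{-k}$, as required by Proposition~\ref{prop: continuum limit}. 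An induction on $k$, unwinding the recursion $G^{\circ(k+1)}=G(G^{\circ k},\dots,G^{\circ k})$, shows that $(v_1,\dots,v_k)$ and $(w_1,\dots,w_k)$ are joined in $G^{\circ k}$ precisely when, at the least $t\le k$ with $v_t\ne w_t$, one has $v_t\rightsquigarrow w_t$ in $G$. Hence $\tilde{\mathcal{E}}_k:=\bigcup_{(v,w)\in E_k}A_{k,v}\times A_{k,w}$ agrees with $\mathcal{E}$ on every pair $(\mathbf{i},\mathbf{j})$ whose first disagreement occurs within the first $k$ coordinates.

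Consequently, writing $D_k=\{(\mathbf{i},\mathbf{j})\in\Omega\times\Omega:i_s=j_s\text{ for all }s\le k\}$, we get $\tilde{\mathcal{E}}_k\Delta\mathcal{E}\subseteq D_k$, and since the coordinates are i.i.d.\ uniform on $[n]$ we have $\rho^{\times 2}(D_k)=n^{-k}\to 0$. Thus $\rho^{\times 2}(\tilde{\mathcal{E}}_k\Delta\mathcal{E})\to 0$, which is exactly the hypothesis of Proposition~\ref{prop: continuum limit}. (This direct estimate is a substitute for the monotone-limit picture sketched above, and avoids having to check that the sets $\tilde{\mathcal{E}}_k$ are genuinely nested.)

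Proposition~\ref{prop: continuum limit} then gives, for every finite digraph $G'=(V',E')$ — in particular for every rooted forest —
\[
\lim_{k\to\infty}\frac{|\Hom(G',G^{\circ k})|}{n^{k|V'|}}=\rho^{\times V'}\bigl(\Hom(G',(\Omega,\mathcal{E}))\bigr),
\]
using $n^{k|V'|}=|V(G^{\circ k})|^{|V'|}$. In particular the coefficient $\beta_{G'}$ exists for every finite rooted tree, so since $|V(G^{\circ k})|=n^k\to\infty$, Theorem~\ref{thm:limit} applies to the sequence $(G^{\circ k})_k$: whenever $\mu_k\in\mathcal{P}(\R)$ satisfies $\mu_k^{\uplus n^k}\to\mu$, the limit $\widehat{\mu}=\lim_{k\to\infty}\boxplus_{G^{\circ k}}(\mu_k)$ exists and depends only on $\mu$ and the numbers $\beta_{G'}$. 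I expect the only real work to be the bookkeeping in the first paragraph — reconciling the recursively built edge relation of $G^{\circ k}$ with cylinder sets of $[n]^{\N}$ and pinning down the $n^{-k}$ bound on the discrepancy — rather than anything analytic, since Propositions~\ref{prop: continuum limit} and the general machinery of Theorem~\ref{thm:limit} do the heavy lifting.
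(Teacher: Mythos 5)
Your proof is correct and takes essentially the same route as the paper: identify the vertices of $G^{\circ k}$ with the level-$k$ cylinder sets in $\Omega = [n]^{\N}$, check that the induced edge cylinders approximate $\mathcal{E}$, and then apply Proposition \ref{prop: continuum limit} and Theorem \ref{thm:limit}. The only deviation is in verifying $\rho^{\times 2}(\tilde{\mathcal{E}}_k \,\Delta\, \mathcal{E}) \to 0$: the paper argues via a monotone limit of the cylinder edge sets (whose stated nesting is in fact reversed unless one adjoins the diagonal), whereas your direct bound by the event that the first $k$ coordinates of the two sequences agree, of measure $n^{-k}$, is a cleaner and slightly more careful way to get the same estimate.
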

	
	\begin{remark}
		Although this result is contained in \cite[Theorem 6.1]{JDW2021}, the proof used here is different.  In \cite{JDW2021}, the proof is not based on a general result such as Theorem \ref{thm:limit}, but rather directly showing the sequence of measures is Cauchy using the uniform continuity estimates for convolution, and there is no hope of generalizing this technique to the setting of Theorem \ref{thm:limit}.  Our proof here goes by way of Theorem \ref{thm:limit} which relies on the moment formulas Theorem \ref{thm:momentformula}, which also gives information about the moments of $\widehat{\mu}$ in the compactly supported case.  Further, we remark that although the continuum limit construction and moment computations used here overlap with \cite{JekelLiu2020}, we avoid the cumulant machinery of \cite[\S 7]{JekelLiu2020}.
	\end{remark}
	
	\subsection{Multi-regular digraphs} \label{subsec: multiregular}
	
	In \cite{JekelLiu2020}, it was shown that for regular digraphs the central limit distribution (under iterated composition) only depends on the number of vertices and the degree.  For digraphs, ``regular'' means in this paper that the out-degree of each vertex is the same.  Here we will generalize this result to sequences of \emph{multi-regular} digraphs.
	
	Fix $n \in \N$.  Let $G_n = (V_n,E_n)$ be a digraph, and assume that
	\begin{equation} \label{eq: multiregular graph 1}
		V_n = \bigsqcup_{j=1}^m V_{n,j},
	\end{equation}
	and further that for $i, j \in [n]$, there is a constant $A_{n,i,j}$ such that
	\begin{equation} \label{eq: multiregular graph 2}
		\text{for } v \in V_{n,i}, \quad |\{w \in V_{n,j}: v \rightsquigarrow w\}| = A_{n,i,j},
	\end{equation}
	that is, each vertex in $V_{n,i}$ has $A_{n,i,j}$-many edges into $V_{n,j}$.
	
	Let $G' = (V',E')$ be an out-tree.  In order to compute $|\Hom(G',G_n)|$, we partition the set of homomorphisms based on which set $V_{n,j}$ contains the image of each vertex in $V'$.  More precisely, given a label function $\ell: V' \to [m]$, let $\Hom_\ell(G',G_n)$ be the set of $\phi \in \Hom(G',G_n)$ such that $\phi(v) \in G_{n,\ell(v)}$ for all $v \in V'$.  Then $|\Hom_\ell(G',G_n)|$ can be computed by counting the number of choices for where to map each vertex of $G'$ iteratively:  For the root vertex $r$, there $|V_{n,\ell(r)}|$ choices for $\phi(r)$.  For any non-root vertex $v$, let $v_-$ be its predecessor, i.e., the unique vertex with $v_- \rightsquigarrow v$.  Assuming that $\phi(v_-)$ has already been chosen, then $v$ must be mapped to some vertex in $V_{n,\ell(v)}$ which has an ingoing edge from $\phi(v_-) \in V_{n,\ell(v_-)}$, and hence there are $A_{n,\ell(v_-),\ell(v)}$ choices for $\phi(v)$.  Therefore, we have
	\[
	|\Hom_\ell(G',G_n)| = |V_{n,\ell(r)}| \prod_{v \in V' \setminus \{r\}} A_{n,\ell(v_-),\ell(v)}.
	\]
	This argument can be formalized as an induction on $|V'|$ where the inductive step considers removing one leaf from $G'$.  Now summing over $\ell: V' \to [n]$, we obtain
	\begin{equation} \label{eq: multiregular homomorphism}
		|\Hom(G',G_n)| = \sum_{\ell: V' \to [n]} |V_{n,\ell(r)}| \prod_{v \in V' \setminus \{r\}} A_{n,\ell(v_-),\ell(v)}.
	\end{equation}
	We remark that the output of this formula only depends on $|V_{n,j}|$'s and the $A_{n,i,j}$'s, and in particular any multi-regular digraph $\tilde{G}_n$ with these same constants will produce the same number of homomorphisms and hence satisfy $\boxplus_{\tilde{G}_n}(\mu) = \boxplus_{G_n}(\mu)$.
	
	Next, we consider limits as $n \to \infty$.  Assume that
	\begin{equation} \label{eq: multiregular limit}
		\lim_{n \to \infty} \frac{|V_{n,j}|}{|V_n|} = t_j > 0, \qquad \lim_{n \to \infty} \frac{A_{n,i,j}}{|V_n|} = a_{i,j}.
	\end{equation}
	Then we obtain from \eqref{eq: multiregular homomorphism} that
	\[
	\frac{|\Hom(G',G_n)|}{|V_n|^{|V'|}} = \sum_{\ell: V' \to [n]} \frac{|V_{n,\ell(r)}|}{|V_n|} \prod_{v \in V' \setminus \{r\}} \frac{A_{n,\ell(v_-),\ell(v)}}{|V_n|},
	\]
	hence we obtain the following result:
	
	\begin{proposition}[Limit theorem for multiregular digraphs] \label{prop: multiregular 1}
		Let $G_n = (V_n,E_n)$ be a multiregular graph satisfying \eqref{eq: multiregular graph 1} and \eqref{eq: multiregular graph 2} with respect to coefficients $A_{n,i,j}$, such that the limiting conditions \eqref{eq: multiregular limit} hold.  Then
		\begin{equation}
			\lim_{n \to \infty} \frac{|\Hom(G',G_n)|}{|V_n|^{|V'|}} = \sum_{\ell: V' \to [m]} t_{\ell(r)} \prod_{v \in V' \setminus \{r\}} a_{\ell(v_-),\ell(v)} =: \beta_{G'}.
		\end{equation}
		Hence, by Theorem \ref{thm:limit}, if $\mu_n \in \mathcal{P}(\R)$ and $\mu_n^{\uplus |V_n|} \to \mu$, then $\boxplus_{G_n}(\mu_n)$ converges.
	\end{proposition}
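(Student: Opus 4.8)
The plan is to make rigorous the homomorphism count \eqref{eq: multiregular homomorphism} sketched above, divide it by $|V_n|^{|V'|}$, pass to the limit term by term, and then invoke Theorem \ref{thm:limit}. First I would prove \eqref{eq: multiregular homomorphism} by induction on $|V'|$. When $|V'| = 1$, a homomorphism from the one-vertex tree into $G_n$ is a single vertex, and splitting by the part $V_{n,j}$ containing it gives $\sum_{j=1}^m |V_{n,j}|$, which matches the formula. For the inductive step, choose a leaf $v \neq r$ of $G'$ with parent $v_-$ and let $G''$ be the rooted tree obtained by deleting $v$ (note that deleting a leaf of a rooted tree leaves a rooted tree with the same root). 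Fix a labelling $\ell: V' \to [m]$ and write $\ell''$ for its restriction to $V' \setminus \{v\}$. Each $\psi \in \Hom_{\ell''}(G'',G_n)$ extends to a $\phi \in \Hom_\ell(G',G_n)$ precisely by choosing $\phi(v)$ among the out-neighbors of $\psi(v_-)$ that lie in $V_{n,\ell(v)}$; since $\psi(v_-) \in V_{n,\ell(v_-)}$, there are exactly $A_{n,\ell(v_-),\ell(v)}$ such choices by \eqref{eq: multiregular graph 2}, a count that does not depend on $\psi$. Hence $|\Hom_\ell(G',G_n)| = A_{n,\ell(v_-),\ell(v)} |\Hom_{\ell''}(G'',G_n)|$, and summing over $\ell$ (equivalently over $\ell''$ and the value of $\ell$ at $v$) and applying the induction hypothesis to $G''$ yields \eqref{eq: multiregular homomorphism}.

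Next, dividing \eqref{eq: multiregular homomorphism} by $|V_n|^{|V'|}$ and using $|V'| = 1 + |V' \setminus \{r\}|$ gives
\[
\frac{|\Hom(G',G_n)|}{|V_n|^{|V'|}} = \sum_{\ell: V' \to [m]} \frac{|V_{n,\ell(r)}|}{|V_n|} \prod_{v \in V' \setminus \{r\}} \frac{A_{n,\ell(v_-),\ell(v)}}{|V_n|},
\]
which is a sum of $m^{|V'|}$ terms, a number independent of $n$. I can therefore take $n \to \infty$ inside the finite sum and inside each finite product, and the hypotheses \eqref{eq: multiregular limit} give the limit $\sum_{\ell: V' \to [m]} t_{\ell(r)} \prod_{v \in V' \setminus \{r\}} a_{\ell(v_-),\ell(v)} = \beta_{G'}$; this limit exists for every finite rooted tree $G'$. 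With $|V_n| \to \infty$ (as assumed for the sequences under consideration), Theorem \ref{thm:limit} then applies and yields convergence of $\boxplus_{G_n}(\mu_n)$ whenever $\mu_n^{\uplus |V_n|} \to \mu$.

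The computation is essentially routine, so I do not expect a serious obstacle. The one point that demands care is the inductive step: one must check that removing a leaf genuinely leaves a rooted tree, and, crucially, that multi-regularity \eqref{eq: multiregular graph 2} makes the number of extensions of $\psi$ depend only on the labels $\ell(v_-)$ and $\ell(v)$ and not on which vertex of $V_{n,\ell(v_-)}$ the parent was mapped to. If one prefers to avoid the induction, the same bookkeeping can instead be carried out exactly as in the derivation of \eqref{eq: multiregular homomorphism} in the text, by choosing the images of the vertices of $G'$ in an order that processes each vertex only after its parent.
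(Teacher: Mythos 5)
Your proposal is correct and follows essentially the same route as the paper: the paper derives \eqref{eq: multiregular homomorphism} by exactly the iterative/leaf-removal counting you formalize (it even remarks that the argument can be made rigorous by induction on $|V'|$ removing a leaf), then divides by $|V_n|^{|V'|}$, passes to the limit in the finite sum using \eqref{eq: multiregular limit}, and invokes Theorem \ref{thm:limit}. Your explicit inductive step, including the observation that multi-regularity makes the number of extensions depend only on the labels $\ell(v_-),\ell(v)$, is just a fuller write-up of the same argument.
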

	
	Next, let us describe how to compute the measure $\widehat{\mu} = \lim_{n \to \infty} \boxplus_{G_n}(\mu_n)$ in this situation.  We start with the fixed point equations in Proposition \ref{prop: K transform equation} for the $K$-transforms.  Fix $n$.  Then Proposition \ref{prop: K transform equation} gives a system of equations for $K_{\nu_v}$ where $\nu_v$ is the convolution with respect to $\Walk(G_n,v)$ in the sense of \cite{JekelLiu2020}.  Since the graph is multiregular, the isomorphism class of $\Walk(G_n,v)$ is the same for all vertices $v$ in the same part $V_{n,j}$ of our partition.  We denote by $\nu_{n,j}$ the common value of $\boxplus_{\Walk(G_n,v)}(\mu)$ for $v \in V_{n,j}$.  Then Proposition \ref{prop: K transform equation} yields
	\begin{align*}
		K_{\nu_{n,i}}(z) &= K_{\mu_n}\left(z - \sum_{i=1}^n A_{n,i,j} K_{\nu_{n,j}}(z) \right) \\
		K_{\boxplus_{G_n}(\mu_n)}(z) &= \sum_{i=1}^n |V_{n,i}| K_{\nu_{n,i}}.
	\end{align*}
	Since we assume that $\mu_n^{\uplus 1/|V_n|}$ converges, we renormalize these equations as follows:
	\begin{align*}
		|V_n| K_{\nu_{n,i}}(z) &= |V_n| K_{\mu_n}\left(z - \sum_{j=1}^n \frac{A_{n,i,j}}{|V_n|} \cdot |V_n| K_{\nu_{n,j}}(z) \right) \\
		K_{\boxplus_{G_n}(\mu_n)}(z) &= \sum_{j=1}^n \frac{|V_{n,i}|}{|V_n|} \cdot |V_n| K_{\nu_{n,i}}.
	\end{align*}
	By assumption, $|V_n| K_{\mu_n} \to K_\mu$.  We will prove below that $|V_n| K_{\nu_{n,j}}(z)$ converges to some $K_{\nu_j}$, and that we can take the limit of the above equations.
	
	\begin{proposition}[Limit theorem for multiregular digraphs 2] \label{prop: multiregular 2}
		For $n\in\N$ let $G_n = (V_n,E_n)$ be a multiregular graph and assume \eqref{eq: multiregular graph 1}, \eqref{eq: multiregular graph 2}, \eqref{eq: multiregular limit}.  Let $\mu_n$ be a sequence of probability measures such that $\mu_n^{\uplus |V_n|} \to \mu$.  Let $\widehat{\mu} = \lim_{n \to \infty} \boxplus_{G_n}(\mu_n)$.  Then there exist unique probability measures $(\nu_i)_{i=1}^m$ satisfying
		\begin{equation} \label{eq: multiregular limit fixed point}
			K_{\nu_i}(z) = K_{\mu}\left(z - \sum_{j=1}^m a_{i,j} K_{\nu_j}(z) \right),
		\end{equation}
		and in fact $\nu_i = \lim_{n \to \infty} \nu_{n,i}^{\uplus |V_n|}$ where $\nu_i$ is as above.  Then the measure $\widehat{\mu}$ is given by
		\[
		K_{\widehat{\mu}}(z) = \sum_{i=1}^m t_i K_{\nu_i}(z).
		\]
	\end{proposition}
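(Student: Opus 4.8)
The plan is to pass to the limit in the fixed-point system of Proposition~\ref{prop: K transform equation}, using the renormalization displayed just before the statement. First I would invoke Proposition~\ref{prop: K transform equation} for $G = G_n$: setting $\nu_v = \boxplus_{\Walk(G_n,v)}(\mu_n)$ for $v \in V_n$, the tuple $(K_{\nu_v})_{v \in V_n}$ is the unique solution of \eqref{eq: fixed point} (uniqueness from \cite[Theorem 4.1]{JDW2021}), and $K_{\boxplus_{G_n}(\mu_n)} = \sum_{v \in V_n} K_{\nu_v}$. The next observation is that $\nu_v$ depends only on the block $V_{n,i}$ containing $v$: by \eqref{eq: multiregular graph 2}, the rooted tree $\Walk(G_n,v)$ is determined up to isomorphism by $i$ --- at any vertex whose first letter lies in $V_{n,l}$, the number of children whose first letter lies in $V_{n,l'}$ equals $A_{n,l,l'}$, so $\Walk(G_n,v)$ is the ``unfolding'' of the $m$-vertex weighted structure $(A_{n,l,l'})_{l,l'}$ based at $l = i$, and hence $\boxplus_{\Walk(G_n,v)}(\mu_n)$ depends only on $i$. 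Writing $\nu_{n,i}$ for this common measure and grouping the out-neighbours of a vertex of $V_{n,i}$ by which block they fall into, \eqref{eq: fixed point} collapses to the $m$-variable system $K_{\nu_{n,i}}(z) = K_{\mu_n}\bigl(z - \sum_{j=1}^m A_{n,i,j} K_{\nu_{n,j}}(z)\bigr)$, together with $K_{\boxplus_{G_n}(\mu_n)} = \sum_{i=1}^m |V_{n,i}|\, K_{\nu_{n,i}}$.

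Next I would renormalize exactly as in the paragraph preceding the proposition: put $\tilde\mu_n := \mu_n^{\uplus |V_n|}$, which converges to $\mu$ by hypothesis, and $\tilde\nu_{n,i} := \nu_{n,i}^{\uplus |V_n|}$; then, using $K_{\rho^{\uplus t}} = t K_\rho$, the displayed system becomes
\[
K_{\tilde\nu_{n,i}}(z) = K_{\tilde\mu_n}\Bigl( z - \sum_{j=1}^m \tfrac{A_{n,i,j}}{|V_n|} K_{\tilde\nu_{n,j}}(z) \Bigr), \qquad K_{\boxplus_{G_n}(\mu_n)}(z) = \sum_{i=1}^m \tfrac{|V_{n,i}|}{|V_n|} K_{\tilde\nu_{n,i}}(z).
\]
Thus $(\tilde\nu_{n,i})_{i=1}^m$ is the unique solution of a ``weighted $m$-vertex fixed-point system'' with coefficient matrix $(A_{n,i,j}/|V_n|)_{i,j}$ and seed measure $\tilde\mu_n$, and by \eqref{eq: multiregular limit} these data converge, as $n \to \infty$, to $(a_{i,j})_{i,j}$ and $\mu$.

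The crux --- and the step I expect to be the main obstacle, though a mild one --- is a weighted version of \cite[Theorem 4.1]{JDW2021}: for every $m$, every matrix $(a_{i,j})$ with $a_{i,j} \geq 0$, and every $\sigma \in \mathcal{P}(\R)$, the system $K_{\nu_i}(z) = K_\sigma\bigl( z - \sum_j a_{i,j} K_{\nu_j}(z) \bigr)$ has a unique solution $(\nu_i)_{i=1}^m \in \mathcal{P}(\R)^m$, depending continuously (in the L\'evy metric) on $\bigl( (a_{i,j})_{i,j}, \sigma \bigr)$. My claim is that the Earle--Hamilton fixed-point argument of \cite[Theorem 4.1]{JDW2021} applies with no essential change: the only new feature is the nonnegative real weights $a_{i,j}$, and one still has $\im\bigl( z - \sum_j a_{i,j} K_{\nu_j}(z) \bigr) \geq \im z > 0$ on $\C^+$ (each $K_{\nu_j}$ maps $\C^+$ into $\overline{\C^-}$), so the relevant self-map of the appropriate bounded domain of $m$-tuples of reciprocal Cauchy transforms $F_{\nu_i} = z - K_{\nu_i}$ on $\C^+$ is still a holomorphic self-map whose image stays away from the boundary. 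I would verify that the ingredients of that proof (well-definedness of the map on probability-measure data, completeness of the domain, strict interior condition) go through verbatim.

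Granting this, I would conclude as follows. Applying the weighted statement with data $\bigl( (a_{i,j}), \mu \bigr)$ produces the unique $(\nu_i)_{i=1}^m$ solving \eqref{eq: multiregular limit fixed point}; applying its continuous-dependence part to the convergence $\bigl( A_{n,i,j}/|V_n|, \tilde\mu_n \bigr) \to \bigl( (a_{i,j}), \mu \bigr)$ gives $\tilde\nu_{n,i} \to \nu_i$ weakly, i.e.\ $\nu_i = \lim_{n \to \infty} \nu_{n,i}^{\uplus |V_n|}$, and in particular $K_{\tilde\nu_{n,i}} \to K_{\nu_i}$ locally uniformly on $\C^+$. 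On the other hand, Proposition~\ref{prop: multiregular 1} together with Theorem~\ref{thm:limit} gives $\boxplus_{G_n}(\mu_n) \to \widehat\mu$ weakly, hence $K_{\boxplus_{G_n}(\mu_n)} \to K_{\widehat\mu}$ on $\C^+$. Letting $n \to \infty$ in the identity $K_{\boxplus_{G_n}(\mu_n)} = \sum_{i=1}^m (|V_{n,i}|/|V_n|) K_{\tilde\nu_{n,i}}$ and using $|V_{n,i}|/|V_n| \to t_i$ then yields $K_{\widehat\mu}(z) = \sum_{i=1}^m t_i K_{\nu_i}(z)$, which completes the proof.
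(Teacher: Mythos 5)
Your overall architecture is sound and shares the paper's first two reductions: using Proposition \ref{prop: K transform equation}, noting that multiregularity makes $\Walk(G_n,v)$ depend only on the block containing $v$ so that the system collapses to $m$ unknowns, and renormalizing by $|V_n|$ exactly as in the display preceding the proposition. Where you diverge is in how the limit $n\to\infty$ is taken. The paper never invokes a weighted fixed-point theorem for the convergence: it proves directly that the block measures $\tilde\nu_{n,i}=\nu_{n,i}^{\uplus|V_{n,i}|}=\boxplus_{\Walk(G_n,V_{n,i})}(\mu_n)$ converge --- first for $\mu_n=\mu^{\uplus 1/|V_n|}$ with $\mu$ compactly supported, via the moment formula of Lemma \ref{lem: moments of sum} modified to count homomorphisms $\Hom(\mathrm{F}(\pi),G_n,V_{n,i})$ whose outer blocks land in $V_{n,i}$ (so that Proposition \ref{prop: multiregular 1}-type counting applies), and then for general $\mu_n$ by the equicontinuity of $\sigma\mapsto\boxplus_{\Walk(G_n,V_{n,i})}(\sigma^{\uplus 1/|V_n|})$ from \cite[Theorem 6.2]{JDW2021} --- and only afterwards passes to the limit in the renormalized equations by a $3\epsilon$ argument, defining $\nu_i=\tilde\nu_i^{\uplus 1/t_i}$ and obtaining \eqref{eq: multiregular limit fixed point} and $K_{\widehat\mu}=\sum_i t_iK_{\nu_i}$ as limits of the finite-$n$ identities.

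Your route instead places all the weight on a ``weighted'' analogue of \cite[Theorem 4.1]{JDW2021}: existence, uniqueness, and continuous dependence of the solution of $K_{\nu_i}(z)=K_\sigma\bigl(z-\sum_j a_{i,j}K_{\nu_j}(z)\bigr)$ jointly in $\bigl((a_{i,j})_{i,j},\sigma\bigr)$. Be aware that this is not available off the shelf: the cited theorem treats the systems attached to fixed trees (unit coefficients), and its continuity statement is in the input measures for fixed combinatorial data, not in real parameters $a_{i,j}$. So this step is a genuine obligation, not something you can ``verify goes through verbatim''; in particular you must check that the iteration map sends tuples of reciprocal Cauchy transforms of probability measures to tuples of the same kind (your imaginary-part estimate, plus the nontangential normalization $F(z)/z\to 1$), that its image lies strictly inside a suitable bounded domain so Earle--Hamilton applies, and that the resulting contraction is uniform over a neighborhood of the data so that fixed points vary continuously in $\bigl((a_{i,j}),\sigma\bigr)$. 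Granting that lemma, the rest of your argument is correct: $\nu_{n,i}^{\uplus|V_n|}$ is a solution for the data $\bigl(A_{n,i,j}/|V_n|,\mu_n^{\uplus|V_n|}\bigr)$, continuity gives $\nu_{n,i}^{\uplus|V_n|}\to\nu_i$, and weak convergence of $\boxplus_{G_n}(\mu_n)$ (from Proposition \ref{prop: multiregular 1} and Theorem \ref{thm:limit}) lets you pass to the limit in $K_{\boxplus_{G_n}(\mu_n)}=\sum_i(|V_{n,i}|/|V_n|)K_{\tilde\nu_{n,i}}$. A side benefit of your approach, once the weighted lemma is actually proved, is that it makes the uniqueness assertion in \eqref{eq: multiregular limit fixed point} explicit, which the paper's written proof leaves to the same circle of Earle--Hamilton ideas rather than spelling out.
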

	
	\begin{proof}
		The proof of convergence of $|V_n| K_{\nu_{n,i}}$ will follow roughly the same outline as the proof of Theorem \ref{thm:limit}. 
		We first consider the case $\mu_n = \mu^{\uplus 1/|V_n|}$ where $\mu$ is compactly supported, and then extend to the general case by equicontinuity.
		
		First, recall from Proposition \ref{prop: K transform equation} that $\nu_{n,i}$ is the tree convolution of $\mu_n$ according to the tree $\Walk(G_n,v)$ where $v$ is any vertex in $V_{n,i}$.  Now let $\Walk(G_n,V_{n,i})$ be the tree whose vertices are the (reverse) walks that start at some vertex in $V_{n,i}$.  Note $\Walk(G_n,V_{n,i})$ as the union of $\Walk(G_n,v)$ for $v \in V_{n,i}$, where the root vertex $\emptyset$ is in their common intersection but otherwise they are disjoint.  Thus,
		\[
		\boxplus_{\Walk(G_n,V_{n,i})}(\mu_n) = \biguplus_{v \in V_{n,i}} \boxplus_{\Walk(G_n,v)}(\mu_n) = \nu_i^{\uplus V_{n,i}}.
		\]
		Thus, in particular, letting $\tilde{\nu}_{n,i} = \boxplus_{\Walk(G_n,V_{n,i})}(\mu_n)$, we have $K_{\tilde{\nu}_{n,i}} = |V_{n,i}| K_{\nu_{n,i}}$.  In terms of \S \ref{subsec: product space}, $\Walk(G_n,V_{n,i})$ produces a Hilbert space with summands $H_{v_k}^\circ \otimes \dots \otimes H_{v_1}^{\circ}$ where $v_1 \in V_{n,i}$ rather than $v_1$ being arbitrary.  Accordingly, the formula in Theorem \ref{thm:momentformula} is changed to include only partitions $\pi$ where the outer blocks of $\pi$ are labeled by vertices in $V_{n,i}$ instead of arbitrary vertices.  Hence, the formula for the moments of $\tilde{\nu}_{n,i}$ in this case is similar to Lemma \ref{lem: moments of sum} except that instead of all $\pi \in \mathcal{NC}_k(\ell,G_n)$, we only take $\pi$ in the set $\mathcal{NC}_k(\ell,G_n,V_{n,i})$ of all $\pi$ such that $\pi$ is compatible with $\ell$ and the labeling $\ell$ defines a homomorphism from $\mathrm{F}(\pi) \to G$ with all the outer blocks mapped to vertices in $V_{n,i}$.  For an out-forest $G'$, let us denote by $\Hom(G',G_n,V_{n,i})$ the set of homomorphisms from $G'$ to $G_n$ such that all the root vertices of $G'$ are mapped to vertices in $V_{n,i}$.  Then as in Lemma \ref{lem: moments of sum}, we get
		\begin{align*}
			m_k(\tilde{\nu}_{n,i}) &= \sum_{\pi \in \mathcal{NC}_k} |\Hom(\mathrm{F}(\pi),G_n,V_{n,i})| \prod_{B \in \pi} \kappa_{\bool,|B|}(\mu_n), \\
			&= \sum_{\pi \in \mathcal{NC}_k} \frac{|\Hom(\mathrm{F}(\pi),G_n,V_{n,i})|}{|V_n|^{|\pi|}} \prod_{B \in \pi} \kappa_{\bool,|B|}(\mu)
		\end{align*}
		since $\mu_n = \mu^{\uplus 1/|V_n|}$.  Our counting argument for homomorphisms in the multiregular case implies that
		\[
		\frac{|\Hom(\mathrm{F}(\pi),G_n,V_{n,i})|}{|V_n|^{|\pi|}} \to \sum_{\substack{\ell: V' \to [n] \\ \ell(r) = i}} t_{\ell(r)} \prod_{v \in V' \setminus \{r\}} a_{\ell(v_-),\ell(v)},
		\]
		by the same reasoning as in Proposition \ref{prop: multiregular 1}.  Thus, the same reasoning as in Lemma \ref{lem:cpctsuppconvergence} shows that $\tilde{\nu}_{n,i}$ converges as $n \to \infty$ to some $\tilde{\nu}_i$.
		
		Now for the case of general $\mu_n$ such that $\mu_n^{\uplus |V_n|} \to \mu$, we use the fact that the mapping $\sigma \mapsto \boxplus_{\Walk(G_n,V_{n,i})}(\sigma^{\uplus 1/|V_n|}$ is uniformly equicontinuous on any compact subset of $\mathcal{P}(\R)$, which follows from \cite[Theorem 6.2]{JDW2021}.  Thus, the same $3 \epsilon$ argument from the proof of Theorem \ref{thm:limit} in \S \ref{subsec: limit general} applies here.  Thus, we obtain convergence of $\tilde{\nu}_{n,i}$ in this case.
		
		This means that $|V_{n,i}| K_{\nu_{n,i}}$ converges as $n \to \infty$ to $K_{\tilde{\nu}_i}$.  Hence also $|V_n| K_{\nu_{n,i}} = \frac{|V_n|}{|V_{n,i}|} |V_{n,i}| K_{\nu_{n,i}}$ converges to $(1/t_i) K_{\tilde{\nu}_i}$ as $n \to \infty$.  Now let $\nu_i = \tilde{\nu}_i^{\uplus 1/t_i}$.  Now recall that
		\[
		|V_n| K_{\nu_{n,i}}(z) = |V_n| K_{\mu_n}\left(z - \sum_{j=1}^m \frac{A_{n,i,j}}{|V_n|} \cdot |V_n| K_{\nu_{n,j}}(z) \right).
		\]
		We now know that $|V_n| K_{\nu_{n,i}} \to K_{\nu_i}$ as $n \to \infty$.  Moreover, $|V_n| K_{\mu_n}$ converges to $K_\mu$.  Also, the functions $|V_n| K_{\mu_n}$ are equicontinuous because the measures $\mu_n^{\uplus |V_n|}$ inhabit a precompact subset of $\mathcal{P}(\R)$ since $\mu_n^{\uplus |V_n|} \to \mu$.  These facts together imply by a $3 \epsilon$ argument that we can take the limit of the above equation and obtain
		\[
		K_{\nu_i}(z) = K_\mu\left(z - \sum_{j=1}^m a_{i,j} K_{\nu_j}(z) \right).
		\]
		Similar reasoning shows that we can take the limit of the equation $K_{\boxplus_{G_n}(\mu_n)} = \sum_{i=1}^m |V_{n,i}| K_{\nu_{n,i}}$ to obtain $K_{\widehat{\mu}} = \sum_{i=1}^m t_i K_{\nu_i}$.
	\end{proof}
	
	Proposition \ref{prop: multiregular 2} allows for numerically tractable computations of limit measures associated to multiregular digraphs $G_n$.
	
	\begin{example}[Central limit distribution for multiregular digraphs]
		Suppose we want to find the central limit distribution.  Since the Boolean central limit distribution is $\mu = (1/2)(\delta_{-1} + \delta_1)$, the central limit distribution for the sequence $G_n$ will be given by the corresponding measure $\widehat{\mu}$.  Note that in this case $K_\mu(z) = 1/z$.  Thus, \eqref{eq: multiregular limit fixed point} reduces to 
		\[
		K_{\nu_i}(z) = \left(z - \sum_{j=1}^n a_{i,j} K_{\nu_j}(z) \right)^{-1},
		\]
		or equivalently
		\[
		z K_{\nu_i}(z) - \sum_{j=1}^n a_{i,j} K_{\nu_i}(z) K_{\nu_j}(z) = 1.
		\]
		In other words, the $m$ unknowns $(K_{\nu_1}(z),\dots,K_{\nu_m}(z))$ satisfy a quadratic system of $m$ equations.  Then $\widehat{\mu}$ is obtained by $K_{\widehat{\mu}} = \sum_{i=1}^n t_i K_{\nu_i}$.
		
		Figures \ref{fig: CLT example 1}, \ref{fig: CLT example 2}, \ref{fig: CLT example 3} show examples of numerical approximations of central limit densities using the fixed point equation \eqref{eq: multiregular limit fixed point}.  Changing the parameters produces symmetric distributions whose shape can be semicircular, become more flat, and then develop a concave shape in the middle with two bumps on the boundary, somewhat resembling the arcsine distribution.  Of course, for special cases of a complete graph and its complement, one obtains the semicircular distribution and the Bernoulli distribution respectively.
		
	\end{example}
	
	\begin{figure}
		\centering
		
		\begin{tikzpicture}[xscale=2.0, yscale=4.0]
			
			\draw[gray,<->] (0,-0.1) -- (0,0.5);
			\draw[gray,<->] (-2,0) -- (2,0);
			
			\draw[gray] (-1.5,-0.05) -- (-1.5,0.05);
			\node[text=gray] at (-1.5,-0.1) {$-1.5$};
			\draw[gray] (1.5,-0.05) -- (1.5,0.05);
			\node[text=gray] at (1.5,-0.1) {$1.5$};
			
			\draw[gray] (-0.05,0.4) -- (0.05,0.4);
			\node[text=gray] at (-0.2,0.4) {$0.4$};
			
			\draw[blue,smooth] plot coordinates {
				(-2.00000000000000,0.000265726183160442)
				(-1.92000000000000,0.000351439868333772)
				(-1.84000000000000,0.000504872146444988)
				(-1.76000000000000,0.000853007230988464)
				(-1.68000000000000,0.00238849187407116)
				(-1.64000000000000,0.0760553394118639)
				(-1.63600000000000,0.152469121557454)
				(-1.60000000000000,0.343774140750160)
				(-1.52000000000000,0.398582512975111)
				(-1.44000000000000,0.388891687138205)
				(-1.36000000000000,0.371225508557775)
				(-1.28000000000000,0.353942020307990)
				(-1.20000000000000,0.338611185306404)
				(-1.12000000000000,0.325361959427993)
				(-1.04000000000000,0.313992517670936)
				(-0.960000000000000,0.304249448401015)
				(-0.880000000000000,0.295900767303503)
				(-0.800000000000000,0.288751362731279)
				(-0.720000000000000,0.282641992241129)
				(-0.640000000000000,0.277444189639735)
				(-0.560000000000000,0.273054861679553)
				(-0.480000000000000,0.269391629639139)
				(-0.400000000000000,0.266389095127486)
				(-0.320000000000000,0.263995946405210)
				(-0.240000000000000,0.262172763899283)
				(-0.160000000000000,0.260890425871188)
				(-0.0800000000000000,0.260128926822380)
				(0.000000000000000,0.259878548477292)
				(0.0800000000000000,0.260128926822380)
				(0.160000000000000,0.260890425871188)
				(0.240000000000000,0.262172763899283)
				(0.320000000000000,0.263995946405210)
				(0.400000000000000,0.266389095127486)
				(0.480000000000000,0.269391629639139)
				(0.560000000000000,0.273054861679553)
				(0.640000000000000,0.277444189639735)
				(0.720000000000000,0.282641992241129)
				(0.800000000000000,0.288751362731279)
				(0.880000000000000,0.295900767303503)
				(0.960000000000000,0.304249448401015)
				(1.04000000000000,0.313992517670936)
				(1.12000000000000,0.325361959427993)
				(1.20000000000000,0.338611185306404)
				(1.28000000000000,0.353942020307990)
				(1.36000000000000,0.371225508557775)
				(1.44000000000000,0.388891687138205)
				(1.52000000000000,0.398582512975111)
				(1.60000000000000,0.343774140750160)
				(1.63600000000000,0.152469121557454)
				(1.64000000000000,0.0760553394118639)
				(1.68000000000000,0.00238849187407116)
				(1.76000000000000,0.000853007230988464)
				(1.84000000000000,0.000504872146444988)
				(1.92000000000000,0.000351439868333772)
				(2.00000000000000,0.000265726183160442)
			};
			
		\end{tikzpicture}
		
		\caption{Approximation of the central limit density for $2$-regular digraphs with $t_1 = 0.3$, $t_2 = 0.7$, $a_{1,1} = 0.2$, $a_{1,2} = 0.4$, $a_{2,1} = 0.2$, $a_{2,2} = 0.5$.  We approximated the density using the imaginary part of the Cauchy transform at $x+iy$ where $y = 0.001$, and we approximated the Cauchy transform using $10,000$ iterations of the fixed point equation.}
		\label{fig: CLT example 1}
	\end{figure}
	
	\begin{figure}
		\centering
		
		\begin{tikzpicture}[xscale=2.0, yscale=3.0]
			
			\draw[gray,<->] (0,-0.1) -- (0,1.2);
			\draw[gray,<->] (-2,0) -- (2,0);
			
			\draw[gray] (-1.5,-0.05) -- (-1.5,0.05);
			\node[text=gray] at (-1.5,-0.1) {$-1.5$};
			\draw[gray] (1.5,-0.05) -- (1.5,0.05);
			\node[text=gray] at (1.5,-0.1) {$1.5$};
			
			\draw[gray] (-0.05,1.0) -- (0.05,1.0);
			\node[text=gray] at (-0.2,1.0) {$1.0$};
			
			\draw[blue,smooth] plot coordinates {
				(-1.38000000000000,0.00361315722644640)
				(-1.32000000000000,1.10138839398701)
				(-1.26000000000000,0.728645335231011)
				(-1.20000000000000,0.583048925401042)
				(-1.14000000000000,0.499738653508108)
				(-1.08000000000000,0.443751706457028)
				(-1.02000000000000,0.402476883608009)
				(-0.960000000000000,0.370085253719915)
				(-0.900000000000000,0.343440549352775)
				(-0.840000000000000,0.320663473720847)
				(-0.780000000000000,0.300528526999409)
				(-0.720000000000000,0.282178863818168)
				(-0.660000000000000,0.264984934119877)
				(-0.600000000000000,0.248487047986846)
				(-0.540000000000000,0.232416590383439)
				(-0.480000000000000,0.216823392970320)
				(-0.420000000000000,0.202280867504630)
				(-0.360000000000000,0.189823085659090)
				(-0.300000000000000,0.180213996871458)
				(-0.240000000000000,0.173373254238837)
				(-0.180000000000000,0.168741136472075)
				(-0.120000000000000,0.165779819251086)
				(-0.0600000000000001,0.164130549269555)
				(0.000000000000000,0.163625875781091)
				(0.0600000000000001,0.164130549269555)
				(0.120000000000000,0.165779819251086)
				(0.180000000000000,0.168741136472075)
				(0.240000000000000,0.173373254238837)
				(0.300000000000000,0.180213996871458)
				(0.360000000000000,0.189823085659090)
				(0.420000000000000,0.202280867504630)
				(0.480000000000000,0.216823392970320)
				(0.540000000000000,0.232416590383439)
				(0.600000000000000,0.248487047986846)
				(0.660000000000000,0.264984934119877)
				(0.720000000000000,0.282178863818168)
				(0.780000000000000,0.300528526999409)
				(0.840000000000000,0.320663473720847)
				(0.900000000000000,0.343440549352775)
				(0.960000000000000,0.370085253719915)
				(1.02000000000000,0.402476883608009)
				(1.08000000000000,0.443751706457028)
				(1.14000000000000,0.499738653508108)
				(1.20000000000000,0.583048925401042)
				(1.26000000000000,0.728645335231011)
				(1.32000000000000,1.10138839398700)
				(1.38000000000000,0.00361315722644640)
			};
			
		\end{tikzpicture}
		
		\caption{Approximation of the central limit density for $2$-regular digraphs with $t_1 = 0.3$, $t_2 = 0.7$, $a_{1,1} = 0.2$, $a_{1,2} = 0.4$, $a_{2,1} = 0.2$, $a_{2,2} = 0.1$.  We use $y = 0.0001$ and $50,000$ iterations.}
		\label{fig: CLT example 2}
	\end{figure}

	\begin{figure}
		\centering
		
		\begin{tikzpicture}[xscale=2.0, yscale=4.0]
			
			\draw[gray,<->] (0,-0.1) -- (0,0.5);
			\draw[gray,<->] (-2.1,0) -- (2.1,0);
			
			\draw[gray] (-2,-0.05) -- (-2,0.05);
			\node[text=gray] at (-2,-0.1) {$-2$};
			\draw[gray] (2,-0.05) -- (2,0.05);
			\node[text=gray] at (2,-0.1) {$2$};
			
			\draw[gray] (-0.05,0.4) -- (0.05,0.4);
			\node[text=gray] at (-0.2,0.4) {$0.4$};
			
			\draw[blue,smooth] plot coordinates {
				(-2.00000000000000,0.0000455514099182060)
				(-1.80000000000000,0.141224932493458)
				(-1.60000000000000,0.218111289833082)
				(-1.40000000000000,0.253523401548082)
				(-1.20000000000000,0.273819052953625)
				(-1.00000000000000,0.284440904941143)
				(-0.800000000000000,0.290815875706709)
				(-0.600000000000000,0.299468336292987)
				(-0.400000000000000,0.300260311066609)
				(-0.200000000000000,0.302814800534135)
				(0.000000000000000,0.305083040603897)
				(0.200000000000000,0.302814800534135)
				(0.400000000000000,0.300260311066609)
				(0.600000000000000,0.299468336292987)
				(0.800000000000000,0.290815875706709)
				(1.00000000000000,0.284440904941143)
				(1.20000000000000,0.273819052953625)
				(1.40000000000000,0.253523401548082)
				(1.60000000000000,0.218111289833082)
				(1.80000000000000,0.141224932493458)
				(2.00000000000000,0.0000455514099182060)
			};
			
		\end{tikzpicture}
		
		\caption{Approximation of the central limit density for $2$-regular digraphs with $t_1 = 0.5$, $t_2 = 0.5$, $a_{1,1} = 0.4$, $a_{1,2} = 0.5$, $a_{2,1} = 0.4$, $a_{2,2} = 0.5$.  We used $y = 0.0001$ and $50,000$ iterations.}
		\label{fig: CLT example 3}
	\end{figure}

	\begin{example}[Poisson distribution for multiregular digraphs]
		Now consider the Poisson limit theorem or law of small numbers.  Recall from Example \ref{ex: BM LSN} that the Boolean analog of the Poisson distribution of intensity $\lambda$ is $\mu = \frac{1}{1+\lambda} \delta_0 + \frac{\lambda}{1 + \lambda} \delta_{1+\lambda}$, and its $K$-transform is $K_\mu(z) = \lambda z / (z - 1)$.  Thus, \eqref{eq: multiregular limit fixed point} becomes
		\[
		K_{\nu_i}(z) = \lambda \left(z - \sum_{j=1}^n a_{i,j} K_{\nu_j}(z) \right)\left(z - \sum_{j=1}^n a_{i,j} K_{\nu_j}(z) - 1\right)^{-1},
		\]
		which after algebraic manipulation can be written equivalently as
		\[
		K_{\nu_i}(z) = (K_{\nu_i}(z) - \lambda) \left(z - \sum_{j=1}^n a_{i,j} K_{\nu_j}(z) \right).
		\]
		Thus, similar to the central limit case, $K_{\nu_i}(z)$'s satisfy a quadratic system of $m$ equations in $m$ unknowns.
		
		Figures \ref{fig: Poisson example 1} and \ref{fig: Poisson example 2} show numerical approximations of Poisson limit distributions using \eqref{eq: multiregular limit fixed point}.
	\end{example}
	
	\begin{figure}
		\centering
		
		\begin{tikzpicture}[xscale=2,yscale=4]
			
			\draw[gray,<->] (0,-0.1) -- (0,0.7);
			\draw[gray,<->] (-0.2,0) -- (4.2,0);
			
			\draw[gray] (2,-0.05) -- (2,0.05);
			\node[text=gray] at (2,-0.1) {$2$};
			\draw[gray] (4,-0.05) -- (4,0.05);
			\node[text=gray] at (4,-0.1) {$4$};
			
			\draw[blue,smooth] plot coordinates {
				(0.0300000000000000,0.00125960268549968)
				(0.0350000000000000,0.296134506791876)
				(0.0400000000000000,0.465764916733141)
				(0.0450000000000000,0.538442964590653)
				(0.0500000000000000,0.575308216784733)
				(0.0800000000000000,0.596591549499774)
				(0.100000000000000,0.570353770078027)
				(0.160000000000000,0.494405383047892)
				(0.240000000000000,0.423023275593122)
				(0.320000000000000,0.376072778370167)
				(0.400000000000000,0.342702897233410)
				(0.480000000000000,0.317615850397270)
				(0.560000000000000,0.297985717328982)
				(0.640000000000000,0.282166150801879)
				(0.720000000000000,0.269128633813804)
				(0.800000000000000,0.258194392179862)
				(0.880000000000000,0.248895287806084)
				(0.960000000000000,0.240896412434663)
				(1.04000000000000,0.233952861329194)
				(1.12000000000000,0.227876642034585)
				(1.20000000000000,0.222524538062542)
				(1.28000000000000,0.217782787669322)
				(1.36000000000000,0.213559921222612)
				(1.44000000000000,0.209780842998276)
				(1.52000000000000,0.206382649414923)
				(1.60000000000000,0.203311478551432)
				(1.68000000000000,0.200520060818744)
				(1.76000000000000,0.197965733795604)
				(1.84000000000000,0.195608750122111)
				(1.92000000000000,0.193410746577751)
				(2.00000000000000,0.191333263002715)
				(2.08000000000000,0.189336205513990)
				(2.16000000000000,0.187376140205228)
				(2.24000000000000,0.185404278683698)
				(2.32000000000000,0.183363968701736)
				(2.40000000000000,0.181187418455048)
				(2.48000000000000,0.178791236614374)
				(2.56000000000000,0.176070113532119)
				(2.64000000000000,0.172887506160622)
				(2.72000000000000,0.169061317167831)
				(2.80000000000000,0.164340816726975)
				(2.88000000000000,0.158367296455823)
				(2.96000000000000,0.150601965806408)
				(3.04000000000000,0.140179998404377)
				(3.12000000000000,0.125567645125053)
				(3.20000000000000,0.103526248909625)
				(3.28000000000000,0.0636326849824644)
				(3.36000000000000,0.000590037919648674)
				(3.44000000000000,0.000278118008838087)
				(3.52000000000000,0.000189202665131829)
				(3.60000000000000,0.000144668125139997)
				(3.68000000000000,0.000117381752157389)
				(3.76000000000000,0.0000987634500950019)
				(3.84000000000000,0.0000851706516888289)
				(3.92000000000000,0.0000747732090002449)
				(4.00000000000000,0.0000665436875924323)
			};
			
		\end{tikzpicture}
		
		\caption{Approximation of the Poisson limit density with $\lambda = 1$ for $2$-regular digraphs.  We computed using the same parameters as in Figure \ref{fig: CLT example 1}, except on the interval $[0,0.1]$, we used a $y=0.00001$ and $100,000$ iterations.  The computation at $x = 0$ suggests that the measure to have an atom at $0$ of mass about $0.25$.}
		\label{fig: Poisson example 1}
	\end{figure}

	\begin{figure}
		\centering
		
		\begin{tikzpicture}[xscale=1.4,yscale=4]
			
			\draw[gray,<->] (0,-0.1) -- (0,0.7);
			\draw[gray,<->] (-0.2,0) -- (6.2,0);
			
			\draw[gray] (3,-0.05) -- (3,0.05);
			\node[text=gray] at (3,-0.1) {$3$};
			\draw[gray] (6,-0.05) -- (6,0.05);
			\node[text=gray] at (6,-0.1) {$6$};
			
			\draw[smooth,blue] plot coordinates {
				(0.000000000000000,0.000350256238652966)
				(0.100000000000000,0.000782158404867348)
				(0.200000000000000,0.158727558508121)
				(0.300000000000000,0.234333494580896)
				(0.400000000000000,0.237248428106128)
				(0.500000000000000,0.229768359770077)
				(0.600000000000000,0.220656327404564)
				(0.700000000000000,0.211916166179565)
				(0.800000000000000,0.204027944838049)
				(0.900000000000000,0.197033958743137)
				(1.00000000000000,0.190868262684686)
				(1.10000000000000,0.185465896366811)
				(1.20000000000000,0.180683783064938)
				(1.30000000000000,0.176467135973946)
				(1.40000000000000,0.172739904770195)
				(1.50000000000000,0.169441714399428)
				(1.60000000000000,0.166522520585243)
				(1.70000000000000,0.163940581519028)
				(1.80000000000000,0.161661021467772)
				(1.90000000000000,0.159654654104678)
				(2.00000000000000,0.157897019708814)
				(2.10000000000000,0.156367604987349)
				(2.20000000000000,0.155049214138394)
				(2.30000000000000,0.153927462792952)
				(2.40000000000000,0.152990371055298)
				(2.50000000000000,0.152228036397517)
				(2.60000000000000,0.151632371122183)
				(2.70000000000000,0.151196892379729)
				(2.80000000000000,0.150916555367118)
				(2.90000000000000,0.150787622444557)
				(3.00000000000000,0.150807562594208)
				(3.10000000000000,0.150974977004420)
				(3.20000000000000,0.151289547673815)
				(3.30000000000000,0.151752006854280)
				(3.40000000000000,0.152364125937501)
				(3.50000000000000,0.153128723068051)
				(3.60000000000000,0.154049689351066)
				(3.70000000000000,0.155132034005414)
				(3.80000000000000,0.156381949150450)
				(3.90000000000000,0.157806895006685)
				(4.00000000000000,0.159415705944999)
				(4.10000000000000,0.161218716678550)
				(4.20000000000000,0.163227905307654)
				(4.30000000000000,0.165457044718000)
				(4.40000000000000,0.167921843812008)
				(4.50000000000000,0.170640041106256)
				(4.60000000000000,0.173631377407214)
				(4.70000000000000,0.176917305788434)
				(4.80000000000000,0.180520163490652)
				(4.90000000000000,0.184461262423179)
				(5.00000000000000,0.188756798249979)
				(5.10000000000000,0.193409269722039)
				(5.20000000000000,0.198389332168241)
				(5.30000000000000,0.203596231253425)
				(5.40000000000000,0.208766901623418)
				(5.50000000000000,0.213250073377660)
				(5.60000000000000,0.215375678031128)
				(5.70000000000000,0.210341039084288)
				(5.80000000000000,0.180190052219278)
				(5.90000000000000,0.00188985819851512)
				(6.00000000000000,0.000437871451892657)
			};
			
		\end{tikzpicture}
		
		\caption{Approximation of the Poisson limit density with $\lambda = 3$ for $2$-regular digraphs.  We computed using the same parameters as in Figure \ref{fig: CLT example 1}.}
		\label{fig: Poisson example 2}
	\end{figure}
	
	\begin{example}[Cauchy distribution for multiregular digraphs]
		If we take $\mu$ to be the standard Cauchy distribution, then $K_\mu(z) = -i$ in the upper half plane.  Thus, \eqref{eq: multiregular limit fixed point} tells us that $K_{\nu_i}(z) = -i$.  Hence, $K_{\widehat{\mu}}(z) = \sum_{i=1}^m t_i K_{\nu_i}(z) = -i$ since $t_1 + \dots + t_m = 1$.  It follows that when $\mu$ is the standard Cauchy distribution, then also $\widehat{\mu}$ is the standard Cauchy distribution.
	\end{example}

	\subsection{Sparse graphs} \label{subsec: sparse graphs}
	
	The next proposition shows that if a sequence of digraphs is sufficiently sparse, then the normalized count of homomorphisms converges to $\beta_{G'} = 0$, and so the $G_n$-convolution is asymptotically Boolean convolution.  This is a generalization of the case of BM-independence for posets given by regular trees from \cite[\S 8]{JW2008}.
	
	\begin{proposition}[Limit theorem for sparse graphs] \label{prop: sparse}
		Let $G_n$ be a sequence of digraphs such that $|E_n| / |V_n|^2 \to 0$.  Then for every out-tree $G'$ with more than one vertex, we have
		\[
		\lim_{n \to \infty} \frac{|\Hom(G',G_n)|}{|V_n|^{|V'|}} = 0.
		\]
		Hence, in this case, if $\mu_n^{\uplus |V_n|} \to \mu$, then $\boxplus_{G_n}(\mu_n) \to \mu$ also.
	\end{proposition}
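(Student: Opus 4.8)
The plan is to bound $|\Hom(G',G_n)|$ by conditioning on where a single edge of $G'$ is sent. Since $G'$ is a rooted tree with more than one vertex, its root $r$ has at least one child $c$, so $(r,c) \in E'$. First I would partition $\Hom(G',G_n)$ according to the pair $(\phi(r),\phi(c))$; every homomorphism sends this pair into $E_n$. For a fixed pair $(a,b) \in E_n$, the set of $\phi \in \Hom(G',G_n)$ with $\phi(r) = a$ and $\phi(c) = b$ embeds into the set of all functions $V' \setminus \{r,c\} \to V_n$, hence has at most $|V_n|^{|V'|-2}$ elements. Summing over $(a,b) \in E_n$ gives $|\Hom(G',G_n)| \le |E_n|\,|V_n|^{|V'|-2}$, so
\[
\frac{|\Hom(G',G_n)|}{|V_n|^{|V'|}} \le \frac{|E_n|}{|V_n|^2} \longrightarrow 0,
\]
which is the first assertion. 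For the trivial one-vertex tree the normalized count is identically $1$, so in particular $\beta_{G'} = \lim_n |\Hom(G',G_n)|/|V_n|^{|V'|}$ exists for every finite rooted tree $G'$, and Theorem \ref{thm:limit} applies.

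For the second assertion I would compare $G_n$ with the edgeless digraph $\overline{G}_n = (V_n,\varnothing)$. For $\overline{G}_n$ one has $|\Hom(G',\overline{G}_n)|/|V_n|^{|V'|}$ equal to $1$ when $|V'| = 1$ and to $0$ when $|V'| > 1$ (a tree containing an edge admits no homomorphism into an edgeless digraph), so $\overline{G}_n$ has exactly the same limiting coefficients $\beta_{G'}$ as $G_n$. By the last sentence of Theorem \ref{thm:limit}, $\lim_n \boxplus_{G_n}(\mu_n)$ and $\lim_n \boxplus_{\overline{G}_n}(\mu_n)$ therefore coincide. But in the fixed-point system of Proposition \ref{prop: K transform equation} the edgeless digraph has no predecessors, so each $K_{\nu_i} = K_{\mu_n}$ and $K_{\boxplus_{\overline{G}_n}(\mu_n)} = |V_n| K_{\mu_n} = K_{\mu_n^{\uplus |V_n|}}$; equivalently, $\mathcal{NC}_k(\ell,\overline{G}_n)$ consists precisely of the interval partitions compatible with $\ell$, so $\boxplus_{\overline{G}_n}(\mu_n) = \mu_n^{\uplus |V_n|}$. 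The latter converges to $\mu$ by hypothesis, hence $\boxplus_{G_n}(\mu_n) \to \mu$.

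A fully self-contained alternative, avoiding the comparison digraph, is to feed the vanishing of $\beta_{\mathrm{F}(\pi)}$ into Lemma \ref{lem:cpctsuppconvergence}: $\beta_{\mathrm{F}(\pi)} = 0$ unless every connected component of $\mathrm{F}(\pi)$ is a single vertex, i.e.\ unless $\pi$ is an interval partition, in which case $\beta_{\mathrm{F}(\pi)} = 1$. This collapses $\lim_n m_k(\boxplus_{G_n}(\mu^{\uplus 1/|V_n|}))$ to $\sum_{\pi \in I_k} \kappa_{\bool,\pi}(\mu) = m_k(\mu)$ for compactly supported $\mu$, and one upgrades to general $\mu_n$ by the same equicontinuity argument ($3\epsilon$ via Proposition \ref{prop:equicontinuity}) used in the proof of Theorem \ref{thm:limit}. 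I do not expect a genuine obstacle here; the only point requiring care is the counting bound, where one must choose the conditioning edge $(r,c)$ so that exactly two vertices are pinned and the remaining $|V'|-2$ are left entirely free — but this is immediate once the root edge is used.
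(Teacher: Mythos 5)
Your proposal is correct, and its core is the same as the paper's: the counting bound obtained by pinning the image of the root--child edge (giving $|\Hom(G',G_n)| \le |E_n|\,|V_n|^{|V'|-2}$, hence $\beta_{G'}=0$ for any rooted tree with an edge) is exactly the paper's argument, and your ``self-contained alternative'' --- observing that $\beta_{\mathrm{F}(\pi)}$ vanishes unless $\pi$ is an interval partition, so the limit moments in Lemma \ref{lem:cpctsuppconvergence} collapse to $\sum_{\pi} \kappa_{\bool,\pi}(\mu)$ over interval partitions, i.e.\ to $m_k(\mu)$, followed by the equicontinuity upgrade of Proposition \ref{prop:equicontinuity} --- is precisely the paper's proof of the second claim. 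Your primary route for that claim is a mild variant the paper does not use: you compare $G_n$ with the edgeless digraph $\overline{G}_n$, note the two sequences have identical coefficients $\beta_{G'}$, invoke the final clause of Theorem \ref{thm:limit} (the limit depends only on $\mu$ and the $\beta_{G'}$'s), and identify $\boxplus_{\overline{G}_n}(\mu_n)=\mu_n^{\uplus|V_n|}$ via Proposition \ref{prop: K transform equation}; this buys you a shorter argument at the price of leaning on the uniqueness statement of Theorem \ref{thm:limit} as a black box, whereas the paper's direct moment computation makes explicit that the limiting moments are those of $\mu$. Either way the proof is complete; no gap.
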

	
	\begin{proof}
		Suppose that $G'$ is an out-tree with more than one vertex.  Let $r$ be the root, and fix some $v$ with $r \rightsquigarrow v$.  Given a homomorphism $\phi: G' \to G_n$, the pair $(r,v)$ must be mapped to some edge.  There are $|E_n|$ choices for this edge.  Then the remaining $|V'| - 2$ vertices must be mapped to some vertex in $V_n$, and so the number of choices for the rest of the values of $\phi$ is at most $|V'|^{|V_n|-2}$.  Thus,
		\[
		|\Hom(G',G_n)| \leq |E_n| |V'|^{|V_n|-2},
		\]
		so
		\[
		\frac{|\Hom(G',G_n)|}{|V_n|^{|V'|}} \leq \frac{|E_n|}{|V_n|^2} \to 0.
		\]
		Hence, $\beta_{G'} = 0$ unless $G'$ has only one vertex.  More generally, if $G'$ is an out-forest, then $\beta_{G'} = 0$ unless $G'$ has no edges, in which case $\beta_{G'} = 1$.
		
		It follows that in Lemma \ref{lem:cpctsuppconvergence}, the moments $\boxplus_G(\mu^{\uplus 1/|V_n|})$ in \eqref{eq: limit moments} are given by the sum over interval partitions of $\kappa_{\bool,\pi}(\mu)$, since $\mathrm{F}(\pi)$ has no edges if and only if $\pi$ is interval partitions.  This means that when we take $\mu_n = \mu^{\uplus 1/|V_n|}$, then the moments of the limiting measure in \eqref{eq: limit moments} are the same as the moments of $\mu$.
		
		The general statement that if $\mu_n^{\uplus |V_n|} \to \mu$, then $\boxplus_{G_n}(\mu_n) \to \mu$ follows from the equicontinuity of the convolution operations as in \S \ref{subsec: limit general}.
	\end{proof}
	
	\begin{example}[Posets given by finite out-trees]
		The following example is from \cite[\S 8]{JW2008}. Fix $d$.  Let $T_n$ be the $d$-regular rooted out-tree (where each vertex has $d$ outgoing edges) truncated to depth $n$.  Let $G_n = (V_n,E_n)$ be the graph where $E_n$ is the transitive closure of the edge set for $T_n$ (as in Lemma \ref{lem: BM homomorphism volume}).  Observe that
		\[
		|V_n| = \sum_{j=0}^n d^j = \frac{d^{n+1} - 1}{d - 1}.
		\]
		Meanwhile, the number of edges can be counted as follows:  Each vertex $v$ at depth $j$ in the out-tree has $k(v) = \sum_{i=1}^{n-j} d^i$, which evaluates to $d(d^{n-j} - 1)/(d-1)$.  Now summing this over all the vertices, we obtain
		\[
		|E_n| = \sum_{j=0}^n d^j \cdot \frac{d(d^{n-j}-1)}{d-1} = \frac{d}{d-1} \sum_{j=0}^n (d^n - d^j) \leq \frac{n(d^{n+1}-1)}{d-1} = n|V_n|.
		\]
		Thus, 
		\[
		\frac{|E_n|}{|V_n|^2} \leq \frac{n}{|V_n|} = \frac{n(d-1)}{(d^{n+1} - 1)} \to 0.
		\]
		Hence, the limiting measures for the sequence $G_n$ reduce to those of the Boolean case.  This generalizes the observation of \cite{JW2008} that the central limit measure for this case is $(1/2)(\delta_{-1}+\delta_1)$.
	\end{example}
	
	\begin{remark}
		Our argument to bound the number of homomorphisms in the proof of Proposition \ref{prop: sparse} generalizes to yield the following statement:  If $G'$ is a out-tree, $G''$ out-subtree of it, and $G$ is any finite digraph, then
		\[
		\frac{|\Hom(G',G)|}{|V|^{|V'|}} \leq \frac{|\Hom(G'',G)|}{|V|^{|V''|}}.
		\]
		The reason for this is that every homomorphism $G' \to G$ restricts to a homomorphism $G'' \to G$.  Hence,
		\[
		\Hom(G',G) \subseteq \Hom(G'',G) \times V^{V' \setminus V''},
		\]
		where $V^{V' \setminus V''}$ denotes the set of all functions $V' \setminus V'' \to V$.  Thus, we get
		\[
		|\Hom(G',G)| \leq |\Hom(G'',G)| |V|^{|V'| - |V''|},
		\]
		which is the inequality asserted above.  In particular, in the situation where $\beta_{G'} = \lim_{n \to \infty} |\Hom(G',G_n)| / |V_n|^{|V'|}$ exists for all out-trees $G'$, then we have $\beta_{G'} \leq \beta_{G''}$ whenever $G''$ is an out-subtree of $G'$ (in fact, we do not even need the root of $G''$ to agree with the root of $G'$).
	\end{remark}
	
	\section{Fock space models} \label{sec: Fock space}
	
	Proposition \ref{prop: continuum limit} described a general situation when each limit $\beta_{G'}$ exists based on discrete approximations of measurable digraphs.  In this section, we define Fock spaces that serve as a continuum analog of the product space construction in \S \ref{sec: independence}.  These Fock spaces in particular furnish models for the limiting measures in Proposition \ref{prop: continuum limit} in the compactly supported setting (see Corollary \ref{cor: Fock space gives limit measure} for precise statement). As motivation, we point out that \cite{Attal2011} showed the free Fock space on $L^2[0,\infty)$ is a continuum limit of a free product Hilbert spaces.
	
	Our Fock space is a direct sum of Bochner spaces $L^2(\Omega^{\times k}, \rho_k;\mathcal{H}^{\otimes k})$ for some measures $\rho_k$ on a product space $\Omega^{\times k}$ and some Hilbert space $\mathcal{H}$.  The operators will have the form $\mathfrak{n}(\phi) + \ell(h) + \ell(h)^* + \mathfrak{m}(S)$, where $\ell(h)$ and $\ell(h)^*$ are creation and annihilation operators associated to some $h \in L^2(\Omega,\rho_1; \mathcal{H})$, and $\mathfrak{n}(\phi)$ is a multiplication operator associated to $\phi \in L^\infty(\Omega)$, and $\mathfrak{m}(S)$ is another type of multiplication operator associated to $S \in L^\infty(\Omega; B(\mathcal{H}))$.
	
	\subsection{Construction of a Fock space and operators thereon}
	
	Although in the last section, we considered $(\Omega,\rho)$ to be a probability measure space, here we will proceed more generally with a complete $\sigma$-finite measure space, in order to include such examples as the Fock spaces supporting Brownian motions on $[0,\infty)$.  Moreover, while in \S \ref{sec: examples}, we considered edges given by $\mathcal{E} \subseteq \Omega \times \Omega$, we now introduce a weighted version where a general nonnegative $w: \Omega \times \Omega \to [0,\infty)$ replaces the indicator function $\mathbbm{1}_{\mathcal{E}}$; the weighted version will be used in Example \ref{ex: multiregular Fock space}.
	
	\begin{definition}
		Let $(\Omega,\rho)$ be a complete $\sigma$-finite measure space and let $(\Omega^{\times k},\rho^{\times k})$ be the completed product measure space.  Let $\mathrm{w} \in L^\infty(\Omega \times \Omega)$ with $\mathrm{w} \geq 0$.   For $k \geq 1$, let $\rho_k$ be the measure on $\Omega^{\times k}$ given by
		\[
		d\rho_k^{\mathrm{w}}(\omega_1,\dots,\omega_k) = \mathrm{w}(\omega_k,\omega_{k-1}) \dots \mathrm{w}(\omega_2,\omega_1)\,d\rho^{\times k}(\omega_1,\dots,\omega_k);
		\]
		here $\rho_1^{\mathrm{w}} = \rho$.  Note that $(\Omega^{\times k},\rho_k^{\mathrm{w}})$ can be completed to a complete measure space. Let $\mathcal{H}$ be a Hilbert space.  Then we define the \emph{Fock space} as the Hilbert space
		\[
		\mathcal{F}(\Omega,\rho,\mathrm{w},\mathcal{H}) = \C \oplus \bigoplus_{k \in \N} L^2(\Omega^{\times k}, \rho_k^{\mathrm{w}}; \mathcal{H}^{\otimes k}).
		\]
		We denote the vector $1$ in the first summand by $\xi$.  Furthermore, we adopt the convention that $\Omega^{\times 0}$ is a single point, $\rho_0^{\mathrm{w}}$ is the unique probability measure on it, and $\mathcal{H}^{\otimes 0} = \C$; thus, $L^2(\Omega^{\times 0}, \rho_0, \mathcal{H}^{\times 0}) = \C$.
	\end{definition}
	
	\begin{definition}
		Consider the same setup as in the previous construction.  Let $h \in L^2(\Omega,\rho; \mathcal{H})$.  Then we define the \emph{left creation operator} $\ell(h): \mathcal{F}(\Omega,\rho,\mathrm{w},\mathcal{H}) \to \mathcal{F}(\Omega,\rho,\mathrm{w},\mathcal{H})$ as follows.  For $k \geq 0$ and $f \in L^2(\Omega^{\times k},\rho_k^{\mathrm{w}};\mathcal{H}^{\otimes k})$, let
		\[
		[\ell(h) f](\omega_1,\dots,\omega_{k+1}) = h(\omega_1) \otimes f(\omega_2,\dots,\omega_{k+1}).
		\]
		This formula immediately yields a well-defined element of $L^2(\Omega^{\times (k+1)},\rho_1 \times \rho_k^{\mathrm{w}};\mathcal{H}^{\otimes(k+1)})$, but in fact it even yields a well-defined element of $L^2(\Omega^{\times(k+1)},\rho_{k+1}^{\mathrm{w}};\mathcal{H}^{\otimes(k+1)})$ since
		\begin{multline*}
			\int_{\Omega^{\times k}} \norm{h(\omega_1) \otimes f(\omega_2,\dots,\omega_{k+1})}_{\mathcal{H}^{\otimes (k+1)}}^2 \mathrm{w}(\omega_k,\omega_{k-1}) \dots \mathrm{w}(\omega_2,\omega_1) \,d\rho^{\times (k+1)}(\omega_1,\dots,\omega_{k+1}) \\
			\leq \int_{\Omega^{\times k}} \norm{h(\omega_1)}_{\mathcal{H}}^2 \norm{f(\omega_2,\dots,\omega_{k+1})}_{\mathcal{H}^{\otimes (k+1)}}^2 \norm{\mathrm{w}}_{L^\infty(\Omega \times \Omega)} \,d\rho(\omega_1)\,d\rho_k^{\mathrm{w}}(\omega_2,\dots,\omega_k),
		\end{multline*}
		and moreover
		\[
		\norm{\ell(h) f}_{L^2(\Omega^{\times(k+1)},\rho_{k+1}^{\mathrm{w}},\mathcal{H})} \leq \norm{w}_{L^\infty(\Omega \times \Omega)} \norm{h}_{L^2(\Omega,\rho;\mathcal{H})} \norm{f}_{L^2(\Omega^{\times k},\rho_k^{\mathrm{w}})}.
		\]
		Note that in the case $k = 0$, we have $\ell(h) \xi = h \in L^2(\Omega,\rho;\mathcal{H})$. It follows that $\ell(h)$ defines a bounded operator on $\mathcal{F}(\Omega,\rho,w,\mathcal{H})$ with
		\[
		\norm{\ell(h)} \leq \norm{w}_{L^\infty(\Omega \times \Omega)} \norm{h}_{L^2(\Omega,\rho;\mathcal{H})}.
		\]
		Thus, the creation operator is well-defined.  Its adjoint $\ell(h)^*$ is called the \emph{left annihilation operator} associated to $h$.
	\end{definition}
	
	\begin{fact}
		The annihilation operator $\ell(h)$ satisfies
		\[
		\ell(h)^* \xi = 0
		\]
		Moreover, $\ell(h)^*$ maps $L^2(\Omega^{\times k},\rho_k^{\mathrm{w}}; \mathcal{H}^{\otimes k})$ into $L^2(\Omega^{\times (k-1)},\rho_{k-1}^{\mathrm{w}}; \mathcal{H}^{\otimes (k-1)})$ for each $k \geq 1$ and satisfies
		\[
		[\ell(h)^* f](\omega_1,\dots,\omega_{k-1}) = \int_\Omega (\ip{h(\omega),-}_{\mathcal{H}} \otimes \id_{\mathcal{H}^{\otimes(k-1)}})[f(\omega,\omega_1,\dots,\omega_{k-1})] \mathrm{w}(\omega_1, \omega) \,d\rho(\omega),
		\]
		where $\ip{h(\omega),-} \otimes \id_{\mathcal{H}^{\otimes(k-1)}}$ denotes the map $\mathcal{H}^{\otimes k} \to \mathcal{H}^{\otimes (k-1)}$ given by
		\[
		f_1 \otimes \dots \otimes f_k \mapsto \ip{h(\omega),f_1}_{\mathcal{H}} f_2 \otimes f_3 \otimes \dots \otimes f_k,
		\]
		and in the case $k = 1$, it is $f \mapsto \ip{h(\omega),f}_{\mathcal{H}} \in \C$.
	\end{fact}
	
	\begin{remark}
		It may be easier to understand the annihilation operator through its action on simple tensors.  If $f(\omega_1,\dots,\omega_k) = f_1(\omega_1) \otimes \dots \otimes f_k(\omega_k)$, where $f_j \in L^2(\Omega,\rho)$, then
		\[
		[\ell(h)^* f](\omega_1,\dots,\omega_{k-1}) = \int_\Omega \ip{h(\omega),f_1(\omega)} \mathrm{w}(\omega_1,\omega)\,d\rho(\omega) f_2(\omega_1) \otimes \dots \otimes f_k(\omega_{k-1}).
		\]
	\end{remark}
	
	\begin{definition}
		$L^\infty(\Omega, B(L^2(\Omega,\rho))$ denotes the space of essentially bounded $*$-SOT measurable maps from $\Omega$ into $B(L^2(\Omega,\rho))$.
	\end{definition}
	
	\begin{definition}
		Consider the Fock space  $\mathcal{F}(\Omega,\rho,w,\mathcal{H})$ defined above.  Let $S \in L^\infty(\Omega, B(\mathcal{H}))$.  Then we define the \emph{multiplication operator} $\mathfrak{m}(S): \mathcal{F}(\Omega,\rho,\mathrm{w},\mathcal{H}) \to \mathcal{F}(\Omega,\rho,\mathrm{w},\mathcal{H})$ by  $\mathfrak{m}(S)|_{\C} = 0$ and for $k \geq 1$ and $f \in L^2(\Omega^{\times k},\rho_k; \mathcal{H}^{\otimes k})$,
		\[
		[\mathfrak{m}(S)f](\omega_1,\dots,\omega_k) := (S(\omega_1) \otimes \id_{\mathcal{H}^{\otimes (n-1)}})(f(\omega_1,\dots,\omega_k)).
		\]
		Since $S(\omega_1) \otimes \id_{\mathcal{H}^{\otimes(k-1)}}$ defines a bounded operator on $\mathcal{H}^{\otimes k}$ with norm less than or equal to that of $S(\omega_1)$, we deduce that
		\[
		\norm{\mathfrak{m}(S)f}_{L^2(\Omega^{\times k},\rho_k)} \leq \norm{S}_{L^\infty(\Omega,B(L^2(\Omega,\rho))} \norm{f}_{L^2(\Omega^{\times k},\rho_k;\mathcal{H}^{\otimes k})}.
		\]
		Thus, $\mathfrak{m}(S)$ defines a bounded operator on $L^2(\Omega^{\times k},\rho_k)$ with norm less than or equal to that of $\norm{S}_{L^\infty(\Omega,B(\mathcal{H}))}$.  Since $\mathcal{F}(\Omega,\rho,w,\mathcal{H})$ is the direct sum of the subspaces $L^2(\Omega^{\times n},\rho_k;\mathcal{H}^{\otimes k})$, we conclude that $\mathfrak{m}(S)$ is a bounded operator on the Fock space.
	\end{definition}
	
	\begin{fact}
		$\mathfrak{m}: L^\infty(\Omega, B(\mathcal{H})) \to B(\mathcal{F}(\Omega,\rho,w,\mathcal{H}))$ is a $*$-homomorphism.
	\end{fact}
	
	\begin{definition}
		Let $\phi \in L^1(\Omega,\rho)$.  Then we define an operator $\mathfrak{n}(\phi)$ on $\mathcal{F}(\Omega,\rho,\mathrm{w},\mathcal{H})$ by
		\[
		\mathfrak{n}(\phi)|_{\C} = \int_{\Omega} \phi \,d\rho,
		\]
		and for $f \in L^2(\Omega^{\times k},\rho_k^{\mathrm{w}}; \mathcal{H}^{\otimes k})$,
		\[
		(\mathfrak{n}(\phi)f)(\omega_1,\dots,\omega_k) = \int \phi(\omega) w(\omega_1,\omega)\,d\rho(\omega) f(\omega_1,\dots,\omega_k).
		\]
		Here $\mathfrak{n}(\phi)$ maps $L^2(\Omega^{\times k},\rho_k^{\mathrm{w}}; \mathcal{H}^{\otimes k})$ into itself for each $k$.  Also, $\norm{\mathfrak{n}(\phi)} \leq \norm{\mathrm{w}}_{L^\infty(\Omega^{\times 2}, \rho^{\times 2})} \norm{\phi}_{L^1(\Omega,\rho)}$.
	\end{definition}
	
	The following observation may be helpful for understanding the motivation or intuition of the operator $\mathfrak{n}(\phi)$.
	
	\begin{fact} \label{obs:secondmultiplication}
		Let $h_1, h_2 \in \mathcal{H}$, let $\psi_1, \psi_2 \in L^2(\Omega,\rho)$, and let $\psi_j h_j \in L^2(\Omega,\rho;\mathcal{H})$ be the map $\omega \mapsto \psi_j(\omega) h_j$.  Then
		\[
		\ell(\psi_1 h_1)^* \ell(\psi_2 h_2) = \ip{h_1, h_2}_{\mathcal{H}} \mathfrak{n}(\overline{\psi}_1 \psi_2).
		\]
		This is proved by directly computing the effect of these operators on some $f \in L^2(\Omega^{\times k},\rho_k^{\mathrm{w}};\mathcal{H}^{\otimes k})$.
	\end{fact}
	
	\subsection{Combinatorial formula for operators on a Fock space}
	
	Our next goal is to derive a combinatorial formula for the ``joint moment''
	\[
	\ip{\xi, T_n T_{n-1} \dots T_1 \xi},
	\]
	where $T_1$, \dots, $T_n$ are creation, annihilation, or multiplication operators on the Fock space $\mathcal{F}(\Omega,\rho,\mathrm{w},\mathcal{H})$.  For the sake of induction, we will find a combinatorial expression for the vector $T_n T_{n-1} \dots T_1 \xi$ itself.
	
	{\bf Setup:}  Let $T_1$, \dots, $T_n$ be operators on $\mathcal{F}(\Omega,\rho,w,\mathcal{H})$ such that each $T_j$ is one of the following types:
	\begin{itemize}
		\item $T_j = \ell(h_j)$ for some $h_j \in L^2(\Omega,\rho,\mathcal{H})$,
		\item $T_j = \ell(h_j)^*$ for some $h_j \in L^2(\Omega,\rho,\mathcal{H})$.
		\item $T_j = \mathfrak{m}(S_j)$ for some $S_j \in L^\infty(\Omega,\rho;B(\mathcal{H}))$.
		\item $T_j = \mathfrak{n}(\phi_j)$ for some $\phi_j \in L^1(\Omega,\rho)$.
	\end{itemize}
	Let $k(j)$ be the number of creation operators among $\{T_1,\dots,T_j\}$ minus the number of annihilation operators among $\{T_1,\dots,T_j\}$.
	
	\begin{fact}
		Because a creation operator maps $L^2(\Omega^{\times k},\rho_k^{\mathrm{w}};\mathcal{H}^{\otimes k})$ into $L^2(\Omega^{\times (k+1)},\rho_{k+1}^{\mathrm{w}};\mathcal{H}^{\otimes (k+1)})$ while an annihilation operator does the opposite, one can verify by induction on $n$ that $T_n \dots T_1 \xi \in L^2(\Omega^{\times k(n)},\rho_{k(n)}^{\mathrm{w}};\mathcal{H}^{\otimes k(n)})$ if $k(j) \geq 0$ for all $j$.  Moreover, $T_n \dots T_1 \xi = 0$ if $k(j)$ is ever negative.
	\end{fact}
	
	Now assume that $k(j) \geq 0$ for all $j$.  For each $j = 1$, \dots, $n$, we define $m(j)$ as follows:
	\begin{itemize}
		\item If $T_j = \mathfrak{n}(\phi_j)$ or $T_j = \ell(h_j)$, then set $m(j) = j$.
		\item If $T_j = \ell(h_j)^*$ or $T_j = \mathfrak{m}(S_j)$, let $m(j)$ be the greatest index $m$ such that $k(m-1) < k(j-1)$.
	\end{itemize}
	In the second case, note that $k(j-1) \geq k(j)$.  By definition, $k(i) \geq k(j-1)$ for all $i$ between $m(j)$ and $j$.  Moreover, since $|k(i+1) - k(i)| \leq 1$, we deduce that $k(m-1) = k(m) - 1 = k(j-1) - 1$, and thus $T_{m(j)}$ is a creation operator.
	
	\begin{remark}
		The intuition behind the choice of $m(j)$ is the following:  Each creation operator ``creates'' a new particle that is tensored onto the left of the vector it acts on, while each annihilation operator ``annihilates'' a particle.  The multiplication operators neither create nor annihilate anything.  The number $k(j)$ represents the current tensor degree, or the number of particles that exist at time $j$ (after the application of $T_j$).  If $T_j$ is $\ell(h_j)^*$ or $\mathfrak{m}(S_j)$, then $T_{m(j)}$ is the creation operator that created the newest particle that still exists, the one that $T_j$ is acting on.  In the case where $T_j = \mathfrak{n}(\phi_j)$, in light of Fact \ref{obs:secondmultiplication}, we can imagine that $T_j$ creates and immediately annihilates some ephemeral particle.  With respect to this picture, the next construction will be to group together all the operators that act on ``the same particle''; the indices of these operators will form the block of a non-crossing partition: non-crossing because the operators can only act on the newest existing particle, and thus this particle must be annihilated before any operator can act on an older particle.
	\end{remark}
	
	Let $\pi$ be the partition of $[n]$ given by $i \sim_\pi j$ if and only if $m(i) = m(j)$.  Observe that
	\begin{itemize}
		\item Each creation operator satisfies $m(j) = j$ and hence is the first element of its block.
		\item If $T_j$ is an annihilation operator, then it is the last element of its block.  This is because $k(j) < k(j-1)$ and this prevents any later index $i > j$ from having $m(i) = m(j)$.
		\item If $T_j = \mathfrak{n}(\phi_j)$, then $\{j\}$ is a singleton block of $\pi$.
	\end{itemize}
	
	We call a block \emph{finished} if either has a single $\mathfrak{n}$ operator or has both a creation and annihilation operator.  Otherwise, we call a block \emph{unfinished}.  The unfinished blocks will have a creation operator but no annihilation operator.
	
	\begin{fact} \label{obs: CAM partition}
		For every sequence of creation, annihilation, and multiplication operators with $k(j) \geq 0$ for all $j$, the associated partition $\pi$ constructed above is non-crossing.
	\end{fact}
	
	\begin{proof}
		Suppose for contradiction that $i < j < i' < j'$ with $i \sim_\pi i'$ and $j \sim_\pi j'$ for $i \not \sim_\pi j$.  By the preceding discussion, since $i \sim_\pi i'$ and $i < j < i'$, we must have $k(j) \geq k(i) = k(i'-1)$.  Note that $T_{j'}$ must be an annihilation operator or $\mathfrak{m}$ operator, and $k(j' - 1) = k(j)$.  If $k(j) > k(i)$, then $k(i'-1) < k(j'-1)$, which would imply that $m(j') \geq i'$ by definition of $m$, but this contradicts the fact that $m(j') = m(j) \leq j < i'$.  On the other hand, suppose $k(j) = k(i) = k(i' - 1)$.  Since $j \sim_\pi j'$, $T_j$ cannot be an annihilation operator, so $k(j-1) \leq k(j)$, but also $k(j-1) \geq k(i) = k(j)$, since $j - 1$ is between $i$ and $i'$, hence $k(j-1) = k(j)$.  Recall that $m(i') = m(i) \leq i$ is the last index before $i'$ where $k(m-1) < k(i' - 1) = k(j)$.  Since $k(t) \geq k(m(i))$ for all $t$ between $m(i) = m(i')$ and $i'$, we deduce that $m(i')$ is also the last index before $j$ where $k(m - 1) < k(j-1) = k(j)$, which implies that $m(j) = m(i)$, which contradicts the assumption that $i$ and $j$ are in different blocks of $\pi$.
	\end{proof}
	
	With the notation above, for each unfinished block $B = \{i_1,\dots,i_{|B|}\}$, let $h_B \in L^2(\Omega,\rho)$ be given by
	\[
	h_B(\omega) = S_{i_{|B|}}(\omega) \dots S_{i_2}(\omega) h_{i_1}(\omega).
	\]
	For each finished block $B = \{i_1,\dots,i_{|B|}\}$, let
	\[
	\phi_B(\omega) = \begin{cases} \phi_{i_1}, & |B| = 1, \\ \ip{h_{i_{|B|}}(\omega), S_{i_{|B|-1}}(\omega) \dots S_{i_2}(\omega) h_{i_1}(\omega)}_{\mathcal{H}}. \end{cases}
	\]
	
	\begin{proposition} \label{prop: CAM moment}
		Let $T_n \dots T_1$ be a sequence of creation, annihilation, $\mathfrak{m}$, and $\mathfrak{n}$ operators as above, such that $k(j) \geq 0$ for all $j$.  Let $\pred(B)$ denote the predecessor of $B$ in $\pi$.  Let $B_1$, \dots, $B_s$ be the unfinished blocks of $\pi$ listed so that $\min B_1 < \dots < \min B_t$.  Then $t = k(n)$ and $T_n T_{n-1} \dots T_1 \xi \in L^2(\Omega^{\times t}, \rho_t; \mathcal{H}^{\otimes t})$ and
		\begin{multline} \label{eq:CAMformula}
			[T_n T_{n-1} \dots T_1 \xi](\omega_{B_t},\dots,\omega_{B_1}) \\
			= 
			\int_{\Omega^{\times \{B \text{ finished} \}}} \prod_{\substack{B \text{ finished} \\ \depth(B) > 1}} \mathrm{w}(\omega_{\pred(B)},\omega_B)
			\prod_{B \text{ finished}} [\phi_B(\omega_B) \,d\rho(\omega_B)] \\
			h_{B_t}(\omega_{B_t}) \otimes \dots \otimes h_{B_1}(\omega_{B_1})
		\end{multline}
		In the case $t = 0$, we interpret $h_{B_t}(\omega_{B_t}) \otimes \dots \otimes h_{B_1}(\omega_{B_1})$ as $\xi$.
	\end{proposition}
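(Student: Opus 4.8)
The plan is to prove, by induction on $j = 0, 1, \dots, n$, a more precise statement: that the partial product $T_j T_{j-1} \dots T_1 \xi$ is given by the analogue of \eqref{eq:CAMformula} in which $[n]$ is replaced by $[j]$, the partition $\pi$ is replaced by the non-crossing partition $\pi_j$ associated (as in the discussion preceding Observation \ref{obs: CAM partition}) to the truncated sequence $T_1, \dots, T_j$, and $t$ is replaced by its current value $k(j)$; Proposition \ref{prop: CAM moment} is then the case $j = n$. Along the way I would use the observation above that $T_j \dots T_1 \xi$ lies in $L^2(\Omega^{\times k(j)}, \rho_{k(j)}; \mathcal{H}^{\otimes k(j)})$ once $k(i) \geq 0$ for all $i \leq j$, together with the elementary fact --- read off from the description of $\pi$ preceding Observation \ref{obs: CAM partition} --- that $k(j)$ equals the number of unfinished blocks of $\pi_j$, since each unfinished block carries a single creation operator with no matching annihilation, each non-singleton finished block a single creation and a single annihilation, and each singleton finished block a single $\mathfrak{n}$ operator. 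I will assume throughout, as is implicit in the definition of $m(\cdot)$, that no $\mathfrak{m}$ operator acts on the vacuum component. The base case $j = 0$ is just the convention that the empty product is $\xi$.

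For the inductive step, passing from $j$ to $j+1$, write $t = k(j)$ and let $B_1, \dots, B_t$ be the unfinished blocks of $\pi_j$ ordered by $\min B_1 < \dots < \min B_t$; as in the proof of Lemma \ref{lem: evaluation}, the non-crossing structure forces $B_1 \prec \dots \prec B_t$ to be a chain in the nesting forest with $B_s \rightsquigarrow B_{s+1}$, so $B_t$ is the innermost unfinished block and its vector $h_{B_t}$ occupies the leftmost tensor slot. A convenient reformulation is that every non-minimal block $B$ contributes one weight $w(\omega_{\pred(B)},\omega_B)$: the finished ones appear explicitly in \eqref{eq:CAMformula}, while the unfinished ones are hidden inside $\rho_t$. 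I then split into four cases according to the type of $T_{j+1}$, in each comparing the explicit action of the operator with its effect on the partition. If $T_{j+1} = \ell(h_{j+1})$, it creates a new unfinished singleton block which by the $\min$-ordering becomes the leftmost tensor slot $B_{t+1}$ with $h_{B_{t+1}} = h_{j+1}$, and the only new weight $w(\omega_{B_t},\omega_{B_{t+1}})$ is precisely the one absorbed in passing from $\rho_t$ to $\rho_{t+1}$, so the formula is preserved. If $T_{j+1} = \mathfrak{m}(S_{j+1})$, it appends $j+1$ to $B_t$, which stays unfinished, and multiplies $h_{B_t}$ on the left by $S_{j+1}(\cdot)$, matching the definition of $h_B$. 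If $T_{j+1} = \ell(h_{j+1})^*$, it closes $B_t$ to a finished block $B_t' = B_t \cup \{j+1\}$: contracting the leftmost tensorand $h_{B_t}(\omega_{B_t})$ against $h_{j+1}(\omega_{B_t})$ produces exactly $\phi_{B_t'}(\omega_{B_t})$, the variable $\omega_{B_t}$ becomes an integration variable, and the weight $w(\omega_1,\omega)$ from the annihilation formula becomes $w(\omega_{B_{t-1}},\omega_{B_t}) = w(\omega_{\pred(B_t')},\omega_{B_t'})$ --- present exactly when $t \geq 2$, i.e.\ $\depth(B_t') > 1$, while for $t = 1$ the block becomes minimal in the nesting forest and no such weight appears, consistently with the $k = 1$ case of the annihilation formula. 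Finally, if $T_{j+1} = \mathfrak{n}(\phi_{j+1})$, it creates a finished singleton $\{j+1\}$ nested immediately inside $B_t$, contributing $\int \phi_{j+1}(\omega)\,w(\omega_{B_t},\omega)\,d\rho(\omega)$ when $t \geq 1$ (with $w(\omega_{B_t},\cdot) = w(\omega_{\pred(\{j+1\})},\cdot)$) and $\int \phi_{j+1}\,d\rho$ when $t = 0$, again matching \eqref{eq:CAMformula}; invoking Observation \ref{obs:secondmultiplication} makes the form of this contribution transparent. In each case the scalar prefactor of \eqref{eq:CAMformula} picks up exactly the new term, which completes the induction.

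The parts I expect to be the main obstacle are purely the combinatorial bookkeeping: first, establishing up front the structural description of $\pi_j$ --- which blocks are finished or unfinished, that the unfinished blocks form a chain along one branch of the nesting forest, and that closing $B_t$ leaves $B_{t-1}$ as its predecessor in the final partition --- which rests on Observation \ref{obs: CAM partition} together with an analysis of $m(\cdot)$ entirely parallel to the one in the proof of Lemma \ref{lem: evaluation} for Theorem \ref{thm:momentformula}; and second, keeping straight in each of the four cases the dictionary between the tensor slots of $\mathcal{H}^{\otimes t}$, the variables $\omega_{B_i}$, the weight factors hidden inside $\rho_t$ versus those displayed in \eqref{eq:CAMformula}, and the degenerate boundary cases $t = 0$ and $t = 1$ where the $\mathfrak{n}$ and annihilation operators act on or produce the vacuum vector. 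Once these are settled, each of the four inductive steps is a routine computation with the explicit operator formulas, and setting $j = n$ yields Proposition \ref{prop: CAM moment}.
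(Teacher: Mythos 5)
Your proposal is correct and follows essentially the same route as the paper's own proof: induction on the length of the operator string, with a four-way case analysis on the type of the newly applied operator ($\ell$, $\ell^*$, $\mathfrak{m}$, $\mathfrak{n}$), matching its explicit action against the update of the partition data. The extra structural points you make explicit (the unfinished blocks forming a nested chain with $\pred(B_{s+1}) = B_s$, the weights for non-minimal unfinished blocks being absorbed into the density of $\rho_t$, and the degenerate $t=0,1$ boundary cases) are exactly the bookkeeping the paper's argument leaves implicit, and they check out.
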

	
	\begin{proof}
		We proceed by induction on $n$.  The base case is $n = 0$, for which both sides reduce to $\xi$.
		
		For the induction step, suppose the claim holds for $T_n T_{n-1} \dots T_1$, and we will prove it for $T_{n+1} T_n \dots T_1$.  Let $\pi'$ be the partition associated to $T_{n+1}$, \dots, $T_1$.  By restricting $\pi'$ to $\{n,\dots,1\}$, we obtain the partition $\pi$ associated to $T_n$, \dots, $T_1$.
		\begin{itemize}
			\item If $T_{n+1} = \ell(h_{n+1})$, then $m(n+1) = n+1$ and $\{n+1\}$ is an unfinished block in $\pi'$, so $\pi' = \pi \cup \{\{n+1\}\}$.  If $B_1$, \dots, $B_t$ are the unfinished blocks in $\pi$, then the unfinished blocks of $\pi'$ will be $B_1$, \dots, $B_t$ and $B_{t+1} := \{n+1\}$.  Moreover, $h_{B_{t+1}} = h_{n+1}$.  Therefore, the right-hand side of \eqref{eq:CAMformula} for $\pi'$ will be the same as the right-hand side of \eqref{eq:CAMformula} for $\pi$ except with $h_{B_{t+1}}(\omega_{B_{t+1}})$ tensored onto the front of $h_{B_t}(\omega_{B_t}) \otimes \dots \otimes h_{B_1}(\omega_{B_1})$.  Meanwhile,
			\[
			T_{n+1} [T_n \dots T_1 \xi](\omega_{B_{t+1}},\dots, \omega_{B_1}) = h_{n+1}(\omega_{B_{t+1}}) \otimes [T_n \dots T_1](\omega_{B_t},\dots,\omega_{B_1}),
			\]
			and hence \eqref{eq:CAMformula} will be true for $T_{n+1}$, \dots, $T_1$.
			\item Suppose that $T_{n+1} = \ell(h_{n+1})^*$.  Then $n+1$ will be the last element of a finished block $B'$ in $\pi'$ and $B = B' \setminus \{n+1\}$ will be an unfinished block in $\pi$.  Write $B = \{i_1,\dots,i_s\}$.  Then
			\[
			\phi_{B'}(\omega_{B'}) = \ip{h_{n+1}(\omega_{B'}), S_{i_s}(\omega_{B'}) \dots S_{i_2}(\omega_{B'}) h_{i_1}(\omega_{B'})} = \ip{h_{n+1}(\omega_{B'}), h_B(\omega_{B'})}.
			\]
			where $h_B$ is the vector corresponding to $B$ as an unfinished block of $\pi$.  Thus, to obtain the right-hand side of \eqref{eq:CAMformula} for $\pi'$ from the right-hand side of \eqref{eq:CAMformula} for $\pi$, one removes the term $h_B(\omega_B)$ from the unfinished blocks and adds the term $\phi_{B'}(\omega_{B'}) \,d\rho(\omega_{B'})$ to the finished blocks along with $\mathrm{w}(\omega_{B'}, \omega_{\pred(B')})$ if $\depth(B') > 1$.  Meanwhile, looking at the left-hand side of \eqref{eq:CAMformula} the application of $\ell(h_{n+1})$ to $T_n \dots T_1 \xi$ will precisely pair $h_{n+1}(\omega_{B'})$ with $h_B(\omega_{B'})$ in $\mathcal{H}$, multiply by $\mathrm{w}(\omega_{\pred(B')},\omega_{B'})$ if $\depth(B') > 1$, and then integrate $d\rho(\omega_{B'})$.  Hence, the left- and right-hand sides of \eqref{eq:CAMformula} agree for $\pi'$.
			\item Suppose that $T_{n+1} = \mathfrak{n}(\phi_{n+1})$.  Then $B':= \{n+1\}$ is a new finished block in $\pi'$ and $\phi_{B'} = \phi_{n+1}$.  The right-hand side of \eqref{eq:CAMformula} for $\pi'$ differs from that for $\pi$ by adding a new term $\phi_{B'}(\omega_{B'}) d\rho(\omega_{B'})$ to the finished blocks, along with $\mathrm{w}(\omega_{\pred(B')},\omega_{B'})$ if $\depth(B') > 1$.  This agrees with what happens when we apply the operator $\ell(h_{n+1})^*$ to $T_n \dots T_1 \xi$.
			\item Finally, suppose that $T_{n+1} = \mathfrak{m}(S_{n+1})$.  Then $n+1$ is an element of some unfinished block $B'$ of $\pi'$ such that $B = B' \setminus \{n+1\}$ is also unfinished in $\pi$.  Write $B = \{i_1,\dots,i_s\}$.  The right-hand side of \eqref{eq:CAMformula} differs for $\pi'$ and $\pi$ by the replacement of $h_B(\omega_B)$ with
			\[
			h_{B'}(\omega_{B'}) = S_{n+1}(\omega_{B'}) S_{i_s}(\omega_{B'}) \dots S_{i_2}(\omega_{B'}) h_{i_1}(\omega_{B'}) = S_{n+1}(\omega_{B'}) h_{B}(\omega_{B'}).
			\]
			This agrees with the application of the operator $\mathfrak{m}(S_{n+1})$ to $T_n \dots T_1 \xi$.
		\end{itemize}
		This completes the induction step and hence the proof.
	\end{proof}
	
	\subsection{Fock space operators as limits of independent sums}
	
	Now we adapt Proposition \ref{prop: CAM moment} to the case of a sum of a creation, annihilation, and multiplication operators that will model limit distributions arising in applications of Theorem \ref{thm:limit}.
	
	\begin{proposition} \label{prop: CAM moment 2}
		Let a Hilbert space $\mathcal{H}$ with unit vector $\xi$ be given, and let $X \in B(\mathcal{H})$ be self-adjoint.  Let $\mathcal{H}^\circ = \mathcal{H} \ominus \C \xi$, and write $X$ in block form based on the decomposition $\mathcal{H} = \C \xi \oplus \mathcal{H}^\circ$ as
		\begin{equation} \label{eq: X decomposition}
			X = \begin{bmatrix}
				\alpha & h^* \\
				h & S 
			\end{bmatrix}, \text{ where } \alpha \in \C, h \in \mathcal{H}^\circ, S \in B(\mathcal{H}^\circ).
		\end{equation}
		Fix a measure space $(\Omega,\rho)$ and nonnegative $\mathrm{w} \in L^\infty(\Omega \times \Omega)$, and let $\mathcal{F} = \mathcal{F}(\Omega,\rho,\mathrm{w},\mathcal{H}^\circ)$ be the associated Fock space.  Let $A \subseteq \Omega$ with finite measure. 
		Then define $\widehat{X} \in B(\mathcal{F})$ by
		\[
		\widehat{X} = \alpha \mathfrak{n}(\mathbbm{1}_A) + \ell(\mathbbm{1}_A h) + \ell(\mathbbm{1}_A h)^* + \mathfrak{m}(\mathbbm{1}_A S),
		\]
		where we view $\mathbbm{1}_A h \in L^2(\Omega,\rho;\mathcal{H})$ and $\mathbbm{1}_A S \in L^\infty(\Omega,B(\mathcal{H}^\circ)$.  Let $\mu$ and $\widehat{\mu}$ be the spectral distributions of $X$ and $\widehat{X}$ respectively with respect to the appropriate state vectors.  Then
		\[
		m_k(\widehat{\mu}) = \sum_{\pi \in \mathcal{NC}_k} \kappa_{\bool,\pi}(\mu) \int_{A^{\times k}} \prod_{\substack{B \in \pi \\ \depth(B) > 1}} \mathrm{w}(\omega_B,\omega_{\pred(B)}) \prod_{B \in \pi} d\rho(\omega_B).
		\]
	\end{proposition}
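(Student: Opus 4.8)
\emph{Strategy.} The plan is to compute the moments $m_k(\widehat{\mu}) = \ip{\xi, \widehat{X}^k \xi}$ directly from the combinatorial formula of Proposition \ref{prop: CAM moment}. First I would note that $\widehat{X}$ is bounded (each summand was shown bounded in the constructions) and self-adjoint: $\alpha = \ip{\xi, X\xi} \in \R$ since $X = X^*$, the operator $\mathfrak{n}(\alpha\mathbf{1}_A)$ is self-adjoint because $\alpha\mathbf{1}_A$ is real, $\ell(\mathbf{1}_A h) + \ell(\mathbf{1}_A h)^*$ is self-adjoint, and $S = S^*$ makes $\mathfrak{m}(\mathbf{1}_A S)$ self-adjoint. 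Hence $\widehat{\mu}$ is a compactly supported probability measure determined by its moments. Writing $\alpha\mathfrak{n}(\mathbf{1}_A) = \mathfrak{n}(\alpha\mathbf{1}_A)$ and expanding $\widehat{X}^k = \widehat{X}\cdots\widehat{X}$ by distributivity, I would get
\[
m_k(\widehat{\mu}) = \sum \ip{\xi,\ T_k T_{k-1}\cdots T_1\,\xi},
\]
the sum running over the words $(T_1,\dots,T_k)$ with each $T_j \in \{\mathfrak{n}(\alpha\mathbf{1}_A),\ \ell(\mathbf{1}_A h),\ \ell(\mathbf{1}_A h)^*,\ \mathfrak{m}(\mathbf{1}_A S)\}$, and then evaluate each word with Proposition \ref{prop: CAM moment}.

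\emph{Selecting the surviving words and organizing by $\mathcal{NC}_k$.} Next I would identify which words contribute. By the height-function discussion preceding Proposition \ref{prop: CAM moment} (compare Lemma \ref{lem: bijection}), a word gives $0$ unless $k(j) \geq 0$ for all $j$, and since $T_k\cdots T_1\xi$ must lie in the degree-$0$ summand $\C$ for its pairing with $\xi$ to be nonzero, one needs $k(k) = 0$, i.e.\ the associated partition $\pi$ from Observation \ref{obs: CAM partition} has no unfinished blocks. The assignment $\pi \leftrightarrow$ word is then a bijection onto $\mathcal{NC}_k$: a singleton block forces an $\mathfrak{n}$-operator, while a block $B = \{i_1 < \dots < i_{|B|}\}$ with $|B| \geq 2$ forces $T_{i_1} = \ell(\mathbf{1}_A h)$, $T_{i_r} = \mathfrak{m}(\mathbf{1}_A S)$ for $1 < r < |B|$, and $T_{i_{|B|}} = \ell(\mathbf{1}_A h)^*$; conversely each $\pi \in \mathcal{NC}_k$ produces such a word with $k(j)\geq 0$ and $k(k) = 0$. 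For the word coming from $\pi$, formula \eqref{eq:CAMformula} with $t = 0$ (no tensor factors remaining) evaluates the pairing to
\[
\int \prod_{\substack{B \in \pi \\ \depth(B) > 1}} w(\omega_{\pred(B)},\omega_B)\ \prod_{B \in \pi} \phi_B(\omega_B)\, d\rho(\omega_B),
\]
where $\phi_B$ is assembled from the block form \eqref{eq: X decomposition}.

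\emph{Computing the block functions and collecting.} Finally I would identify $\phi_B$. For a singleton $B = \{j\}$ we have $\phi_B = \alpha\mathbf{1}_A$. For $B = \{i_1 < \dots < i_m\}$ with $m \geq 2$, the definition gives $\phi_B(\omega) = \ip{(\mathbf{1}_A h)(\omega),\ (\mathbf{1}_A S)(\omega)^{m-2}(\mathbf{1}_A h)(\omega)} = \mathbf{1}_A(\omega)\,\ip{h, S^{m-2}h}$, using $\mathbf{1}_A^{\,m} = \mathbf{1}_A$. On the other hand, applying Lemma \ref{lem: boolean cumulants} to $X$ on $\mathcal{H} = \C\xi \oplus \mathcal{H}^\circ$ (with $Q$ the projection onto $\mathcal{H}^\circ$, so $Q X\xi = h$ and $(QX)^j\xi = S^{j-1}h$ for $j \geq 1$) gives $\kappa_{\bool,1}(\mu) = \ip{\xi, X\xi} = \alpha$ and $\kappa_{\bool,m}(\mu) = \ip{\xi, X(QX)^{m-1}\xi} = \ip{h, S^{m-2}h}$ for $m \geq 2$; hence $\phi_B = \mathbf{1}_A\,\kappa_{\bool,|B|}(\mu)$ in every case. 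Substituting, the factor $\prod_{B\in\pi}\mathbf{1}_A(\omega_B)$ restricts each block-variable to $A$, and pulling the scalars $\prod_{B\in\pi}\kappa_{\bool,|B|}(\mu) = \kappa_{\bool,\pi}(\mu)$ out of the integral yields the asserted formula (the ``domain $A^k$'' in the statement being shorthand for integrating the block-indexed variables $\omega_B$, $B \in \pi$, over $A$). The only substantive work is the combinatorial bookkeeping in the middle step, but it runs exactly parallel to the proof of Theorem \ref{thm:momentformula} and to Observation \ref{obs: CAM partition}; the main things to watch are that the word-to-partition correspondence is the right one and that the order of the arguments of $w$ is carried through consistently with \eqref{eq:CAMformula}, up to relabeling $[k]$ in reverse (which preserves the nesting forest $\mathrm{F}(\pi)$ and hence $\pred$).
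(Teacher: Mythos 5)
Your proposal is correct and takes essentially the same route as the paper's proof: expand $\widehat{X}^k$ by multilinearity, evaluate each word of creation/annihilation/$\mathfrak{m}$/$\mathfrak{n}$ operators via Proposition \ref{prop: CAM moment}, observe that only words whose associated non-crossing partition has no unfinished blocks survive and that each $\pi \in \mathcal{NC}_k$ determines such a word uniquely, and then identify $\phi_B = \mathbf{1}_A\,\kappa_{\bool,|B|}(\mu)$ using Lemma \ref{lem: boolean cumulants} and the block form \eqref{eq: X decomposition}. The additional checks you spell out (self-adjointness of $\widehat{X}$, the computation $(QX)^{j}\xi = S^{j-1}h$) are details the paper leaves implicit, so there is nothing substantively different.
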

	
	\begin{proof}
		To compute $m_k(\widehat{\mu}) = \ip{\xi, \widehat{X}^k \xi}_{\mathcal{F}}$, we expand $\widehat{X}^k = (\alpha \mathfrak{n}(\mathbbm{1}_A) + \ell(\mathbbm{1}_A h) + \ell(\mathbbm{1}_A h)^* + \mathfrak{m}(\mathbbm{1}_A S))^k$ by multilinearity into the sum of $\ip{\xi,T_k \dots T_1 \xi}_{\mathcal{F}}$, where
		\[
		T_j \in \{ \alpha \mathfrak{n}(\mathbbm{1}_A),\ell(\mathbbm{1}_A h), \ell(\mathbbm{1}_A h)^*,\mathfrak{m}(\mathbbm{1}_A S)\}.
		\]
		Then we apply Proposition \ref{prop: CAM moment} to each term.  Each sequence of creation, annihilation, and multiplication operators such that $k(j) \geq 0$ has an associated non-crossing partition as in Fact \ref{obs: CAM partition}.  If the partition has unfinished blocks, then $T_k \dots T_1 \xi$ is orthogonal to $\xi$ in $\mathcal{F}$ and hence $\ip{\xi, T_k \dots T_1 \xi}$ vanishes.  We are thus left with the terms where the partition does not have any unfinsished blocks.  In this case, similar to Lemma \ref{lem: bijection}, the partition $\pi$ uniquely determines the sequences of creation, annihilation, and multiplication operators by the rule that for singleton blocks $T_j = \alpha \mathfrak{n}(\mathbbm{1}_A)$, and for all other blocks, the leftmost element is the annihilation operator, the rightmost element is the creation operator, and the remaining terms are $\mathfrak{m}(\mathbbm{1}_A S)$.  Therefore, we obtain
		\begin{equation} \label{eq: CAM partition decomposition}
			\ip{\xi, \widehat{X}^k \xi}_{\mathcal{F}} = \sum_{\pi \in \mathcal{NC}_k}
			\int_{\Omega^{\times \pi}} \prod_{\substack{B \in \pi \\ \depth(B) > 1}} \mathrm{w}(\omega_B,\omega_{\pred(B)})
			\prod_{B \in \pi} [\phi_B(\omega_B) \,d\rho(\omega_B)],
		\end{equation}
		where
		\[
		\phi_B(\omega) = \begin{cases}
			\alpha \mathbbm{1}_A(\omega), & |B| = 1 \\
			\mathbbm{1}_A(\omega) \ip{h,S^{|B|-2}h}_{\mathcal{H}^\circ}, & \text{else.}
		\end{cases}
		\]
		By Lemma \ref{lem: Boolean cumulants} and equation \eqref{eq: X decomposition}, we have $\phi_B(\omega) = \mathbbm{1}_A(\omega) \kappa_{\bool,|B|}(\mu)$.  Thus, \eqref{eq: CAM partition decomposition} becomes
		\[
		\ip{\xi, \widehat{X}^k \xi}_{\mathcal{F}} = \sum_{\pi \in \mathcal{NC}_k}
		\prod_{B \in \pi} \kappa_{\bool,|B|}(\mu) \int_{\Omega^{\times \pi}} \prod_{\substack{B \in \pi \\ \depth(B) > 1}} w(\omega_B,\omega_{\pred(B)})
		\prod_{B \in \pi} d\rho(\omega_B),
		\]
		which is the desired formula.
	\end{proof}
	
	We now relate this back to the ideas of \S \ref{subsec: continuum limit method} by specializing to the case where $(\Omega,\rho)$ is a probability space and $w = \mathbbm{1}_{\mathcal{E}}$ for some measurable $\mathcal{E} \subseteq \Omega \times \Omega$.
	
	\begin{corollary} \label{cor: Fock space gives limit measure}
		Consider the same setup as Proposition \ref{prop: CAM moment 2}, and assume that $(\Omega,\rho)$ is a probability measure space, $\mathcal{E} \subseteq \Omega \times \Omega$ is measurable, and $w = \mathbbm{1}_\mathcal{E}$.  Let $\widehat{\mu}$ be the distribution of the operator $\widehat{X}$ in that proposition.  Then we have
		\begin{equation} \label{eq: continuum limit Fock moments}
			m_k(\widehat{\mu}) = \sum_{\pi \in NC(k)} \rho^{\times \pi}(\Hom(\mathrm{F}(\pi),(\Omega,\mathcal{E}))) \kappa_{\bool,\pi}(\mu).
		\end{equation}
		Now as in Proposition \ref{prop: continuum limit}, let $G_n = (V_n,E_n)$ be a finite digraph for each $n \in \N$; let $(A_{n,v})_{v \in V_n}$ be a measurable partition of $\Omega$ into sets of measure $1/|V_n|$, and let $\tilde{\mathcal{E}}_n = \bigcup_{(v,w) \in E_n} A_{n,v} \times A_{n,w}$; suppose that $\rho^{\times 2}(\tilde{\mathcal{E}}_n \, \Delta \, \mathcal{E}) \to 0$.  If $\mu_n \in \mathcal{P}(\R)$ such that $\mu_n^{\uplus |V_n|} \to \mu$, then $\boxplus_{G_n}(\mu_n) \to \widehat{\mu}$.
	\end{corollary}
	
	\begin{proof}
		In Proposition \ref{prop: CAM moment 2}, we take $A = \Omega$, and note that
		\[
		\prod_{\substack{B \in \pi \\ \depth(B) > 1}} w(\omega_{\pred(B)}, \omega_B) = \prod_{\substack{B \in \pi \\ \depth(B) > 1}} \mathbbm{1}_{\mathcal{E}}(\omega_{\pred(B)}, \omega_B) = \mathbbm{1}_{\Hom(\mathrm{F}(\pi),(\Omega,E))}(\omega),
		\]
		and hence
		\[
		\int_{\Omega^{\times k}} \prod_{\substack{B \in \pi \\ \depth(B) > 1}} \mathrm{w}(\omega_{\pred(B)}, \omega_B) \prod_{B \in \pi} d\rho(\omega_B) = \rho^{\times \pi}(\Hom(\mathrm{F}(\pi),(\Omega,E))),
		\]
		so that
		\[
		m_k(\widehat{\mu}) = \sum_{\pi \in NC(k)} \rho^{\times \pi}(\Hom(\mathrm{F}(\pi),(\Omega,\mathcal{E}))) \kappa_{\bool,\pi}(\mu).
		\]
		Now in the setting of Proposition \ref{prop: continuum limit}, we have for finite out-forests $G'$ that
		\[
		\beta_{G'} = \lim_{n \to \infty} \frac{|\Hom(G',G_n)|}{|V_n|^{|V'|}} = \rho^{\times V'}(\Hom(G',(\Omega,\mathcal{E}))).
		\]
		Now from Lemma \ref{lem:cpctsuppconvergence}, it follows that
		\[
		\boxplus_{G_n}(\mu^{\uplus 1/|V_n|}) \to \widehat{\mu}
		\]
		since the $k$th moments converge.  From \S \ref{subsec: limit general}, it is clear that if $\mu_n^{\uplus |V_n|} \to \mu$, then
		\[
		\lim_{n \to \infty} \boxplus_{G_n}(\mu_n) = \lim_{n \to \infty} \boxplus_{G_n}(\mu^{\uplus 1/|V_n|}) = \widehat{\mu}.  \qedhere
		\]
	\end{proof}
	
	\begin{example}[Fock space for iterated compositions] \label{ex: Fock for iterated composition}
		Recall that in \S \ref{subsec: iterated composition}, we applied Proposition \ref{prop: continuum limit} to iterated compositions $G^{\circ k}$ of a digraph $G$ on vertex set $n$ as $k \to \infty$.  In particular, we constructed a digraph on $(\Omega,\mathcal{E})$ with $\Omega = [n]^{\times \N}$ and $\rho$ the infinite product of the uniform probability measure.  Hence, by Corollary \ref{cor: Fock space gives limit measure}, we see that for a compactly supported $\mu$, the measure $\widehat{\mu}$ is modeled by the Fock space associated to $\Omega$, $\mathcal{E}$, $\rho$, and an appropriate Hilbert space $\mathcal{H}^\circ$.  These are a special case of the Fock spaces in \cite[\S 9]{JekelLiu2020} (see \cite[Example 9.16]{JekelLiu2020}).  This also includes the case of free, Boolean, and monotone Fock spaces (see \cite[\S 9.6]{JekelLiu2020}).
	\end{example}
	
	\begin{example}[BM Fock space and Brownian motion] \label{ex: BM Fock space}
		Fock spaces for BM independence associated to symmetric cones have been studied in \cite{AKJW2010}.  As in \S \ref{subsec: BM limit theorem}, let $\Pi \subseteq \R^d$ be a closed convex salient cone.  Consider the measure space $\Omega = \Pi$ with $\rho$ equal to the Lebesgue measure on the linear span of $\Pi$.  For $\eta \in \Pi$, let
		\[
		\widehat{X}_\eta = \ell(\mathbbm{1}_{[0,\eta)}) + \ell(\mathbbm{1}_{[0,\eta)})^*.
		\]
		Then $\widehat{X}_\eta$ is the BM Brownian motion.  Note that if $\eta_1 \preceq \eta_2$, then $\ell(\mathbbm{1}_{[0,\eta_1)}) + \ell(\mathbbm{1}_{[0,\eta_1)})^*$ and $\ell(\mathbbm{1}_{[\eta_1,\eta_2)}) + \ell(\mathbbm{1}_{[\eta_1,\eta_2)})^*$ are monotone independent.\footnote{In this setting, it does not matter whether we include the endpoints of the interval or not, since the boundary of interval has measure zero.}  Similarly, if two intervals are elementwise incomparable, then the associated variables are Boolean independent.
	\end{example}
	
	\begin{example}[Multiregular digraphs] \label{ex: multiregular Fock space}
		As in \S \ref{subsec: multiregular}, we consider multiregular digraphs $G_n = (V_n,E_n)$ with $V_n = \bigsqcup_{j=1}^m V_{n,j}$ so that $|\{w \in V_{n,j}: v \rightsquigarrow w\}| = A_{n,i,j}$ for $v \in V_{n,i}$.  Assume again that
		\[
		\lim_{n \to \infty} \frac{|V_{n,i}|}{|V_n|} = t_i, \qquad \lim_{n \to \infty} \frac{A_{n,i,j}}{|V_n|} = a_{i,j},
		\]
		and recall by Proposition \ref{prop: multiregular 1} that for an out-tree $G'$,
		\[
		\lim_{n \to \infty} \frac{|\Hom(G',G_n)|}{|V_n|^{|V'|}} = \sum_{\ell: V' \to [m]} t_{\ell(r)} \prod_{v \in V' \setminus \{r\}} a_{\ell(v_-),\ell(v)} =: \beta_{G'}.
		\]
		These coefficients can be realized with a tuple $(\Omega,\rho,w)$ as follows.  Let
		\[
		\Omega = [m], \qquad \rho = \sum_{i=1}^m t_i \delta_i, \qquad w(i,j) = \frac{a_{i,j}}{t_j}.
		\]
		Then a direct computation shows that for an out-tree $G' = (V',E')$,
		\[
		\int_{\Omega^{V'}} \prod_{v \in V' \setminus \{r\}} \mathrm{w}(\omega_{v_-},\omega_v) \,d\rho^{\times V'}(\omega) = \sum_{\ell: V' \to [m]} t_{\ell(r)} \prod_{v \in V' \setminus \{r\}} a_{\ell(v_-),\ell(v)}.
		\]
		Therefore, the construction in Proposition \ref{prop: CAM moment 2} with this choice of $(\Omega,\rho,\mathrm{w})$ will realize the moments of limit distributions from Proposition \ref{prop: multiregular 1} in the case of compact support.
	\end{example}

	
	
	
	
	
	\providecommand{\bysame}{\leavevmode\hbox to3em{\hrulefill}\thinspace}
	\providecommand{\MR}{\relax\ifhmode\unskip\space\fi MR }
	\providecommand{\MRhref}[2]{%
		\href{http://www.ams.org/mathscinet-getitem?mr=#1}{#2}
	}
	\providecommand{\href}[2]{#2}

	
	\textbf{Acknowledgements.} We thank the organizers and hosts of the \textit{International Workshop on Operator Theory and its Applications} (Lisbon, Portugal  2019) and of the \textit{19th Workshop: Noncommutative probability, noncommutative harmonic analysis and related topics, with applications}, (Bedlewo, Poland 2022) which facilitated collaboration among the authors.  We also thank the Mathematische Forschungsinstitut Oberwolfach and the University of Wroc{\l}aw for funding DJ's visit to Wroc{\l}aw in May 2024.  DJ thanks Weihua Liu, Ethan Davis, and Zhichao Wang for past collaboration on related projects.  We thank the referees for careful reading and comments which improved the manuscript.
	
	DJ acknowledges funding from the Discovery grant ``Logic and $\mathrm{C}^*$-algebras'' from the Natural Sciences and Engineering Research Council of Canada; the Danish Independent Research Fund, grant 1026-00371B; and a Horizon Europe Marie Sk{\l}odowska Curie Action, FREEINFOGEOM, grant 101209517.

	
\end{document}